\theoremstyle{plain}
\newtheorem{prop}{Proposition}[section]
\newtheorem{lemma}[prop]{Lemma}
\newtheorem{thm}[prop]{Theorem}
\theoremstyle{definition}
\newtheorem{defi}[prop]{Definition}
\theoremstyle{remark}
\newtheorem{remark}[prop]{Remark}
\newcommand{\pa}{\partial} 
\newcommand{\iu}{\mathrm{i}\mkern1mu}
\newcommand{\odd}{\mathrm{odd}}
\newcommand{\Log}{\operatorname{Log}}
\newcommand{\fu}{\mathbf I}
\newcommand{\futoo}{\mathbf K}
\newcommand{\fuPsi}{\mathbf\Psi}
\newcommand{\PI}{\mathbf P_I}
\newcommand{\PK}{\mathbf P_K}
\newcommand{\mcJ}{\mc J\!}
\renewcommand\subsubsection{\@secnumfont}{\bfseries}%
\renewcommand\subsubsection{\@startsection{subsubsection}{3}
  \z@{.5\linespacing\@plus.7\linespacing}{-.5em}%
  {\normalfont\bfseries}}
\newcommand{\stirmat}{U}
\newcommand{\stirelem}{u}
\newcommand{\cone}{\mf p}
\newcommand{\ctwo}{\mf q}
\newcommand{\cthree}{\mf r}
\newcommand{\setmid}{\;:\;}
\DeclareMathOperator{\Coeff}{Coeff}
\DeclareMathOperator{\End}{End}
\DeclareMathOperator{\Mat}{Mat}
\DeclareMathOperator{\GL}{GL}
\DeclareMathOperator{\SL}{SL}
\DeclareMathOperator{\SU}{SU}
\DeclareMathOperator{\PSL}{PSL}
\DeclareMathOperator{\AdS}{AdS}
\DeclareMathOperator{\Ima}{Im}
\DeclareMathOperator{\Rea}{Re}
\DeclareMathOperator{\sgn}{sgn}
\newcommand\N{\mathbb{N}}
\newcommand\R{\mathbb{R}}
\newcommand\Z{\mathbb{Z}}
\newcommand\C{\mathbb{C}}
\newcommand{\h}{\mathbb{H}}
\newcommand{\mc}[1]{\mathcal #1}
\newcommand{\mf}[1]{\mathfrak #1}
\newcommand{\wh}{\widehat}
\newcommand{\eps}{\varepsilon}
\DeclareMathOperator{\id}{id}
\newcommand{\mat}[4]{\begin{pmatrix} #1&#2\\#3&#4\end{pmatrix}}
\newcommand{\textmat}[4]{\left(\begin{smallmatrix} #1&#2 \\ #3&#4 \end{smallmatrix}\right)}
\begin{document}

\title[Fourier expansions for non-unitary twists]{Fourier expansions of 
vector-valued automorphic functions with non-unitary twists}
\author[K.~Fedosova]{Ksenia Fedosova}
\address{Ksenia Fedosova, Albert-Ludwigs-Universit\"at Freiburg, Mathematisches 
Institut, Ernst-Zermelo-Str.~1, 79104 Freiburg im Breisgau, Germany}
\email{ksenia.fedosova@math.uni-freiburg.de}
\author[A.~Pohl]{Anke Pohl}
\address{Anke Pohl, University of Bremen, Department~3 -- Mathematics, Institute for Dynamical Systems, Bibliothekstr.~5,  28359 Bremen, Germany}
\email{apohl@uni-bremen.de}
\author[J.~Rowlett]{Julie Rowlett}
\address{Julie Rowlett, Mathematical Sciences, Chalmers University of 
Technology, 41296 Gothenburg, Sweden}
\email{julie.rowlett@chalmers.se}
\subjclass[2020]{Primary: 58C40, 11F03; Secondary: 33C10, 11F30, 34L10}
\keywords{Fourier expansion, generalized automorphic function, twisted  
automorphic function, non-unitary representation}

\begin{abstract} 
We provide Fourier expansions of vector-valued eigenfunctions of the hyperbolic Laplacian that are twist-periodic in a horocycle direction.
The twist may be given by any endomorphism of a finite-dimensional vector space; no assumptions on invertibility or unitarity are made.
Examples of such eigenfunctions include vector-valued twisted automorphic forms of Fuchsian groups. 
We further provide a detailed description of the Fourier coefficients and explicitly identify each of their constituents, which intimately depend on the eigenvalues of the twisting endomorphism and the size of its Jordan blocks. 
In addition, we determine the growth properties of the Fourier coefficients.
\end{abstract}

\maketitle
\tableofcontents

\section{Introduction}\label{s:intro}

For complex-valued functions on the hyperbolic plane that are simultaneously eigenfunctions of the hyperbolic Laplacian and periodic in a horocycle 
direction\footnote{We refer to Section~\ref{sec:setting} for precise 
definitions. Using the upper half plane model~\eqref{eq:intro_uhp}, we may 
restrict to Laplace eigenfunctions $f\colon \h\to\C$ that are periodic in the 
real direction with period~$1$, thus $f(z) = f(z+1)$ for all $z\in\h$.} it is 
not only known that they admit a Fourier expansion in the direction of 
periodicity but also rather precisely how the Fourier coefficients behave in 
the orthogonal direction. This deep knowledge about the Fourier coefficients is 
indispensable in many studies of the properties of automorphic functions and 
forms and their applications within mathematics and physics. However, not only 
\emph{periodic} Laplace eigenfunctions but also Laplace eigenfunctions with 
certain \emph{controlled non-periodicities} occur naturally in many areas, most 
notably in spectral theory, number theory and mathematical physics. 
Further below, in Section~\ref{sec:motivation}, we provide a few instances of such occurances.

These and further applications demand detailed information about the fine structures of Laplace eigenfunctions with the type of controlled 
non-periodicities (for short: \emph{twists}) as considered in the present 
article. These demands and, in addition, the idea---as promoted already by 
Selberg~\cite{Selberg_est_fourier}---to investigate \emph{twisted} automorphic 
functions alongside their \emph{untwisted} (periodic) relatives and to exploit their interactions in order to understand their properties inspires the quest for \mbox{Fourier(-type)} expansions of vector-valued eigenfunctions of the Laplacian that are not necessarily periodic but rather \emph{twist-periodic} with a twist given by a, potentially non-unitary, endomorphism of a vector space.
With this paper we develop such Fourier expansions for \emph{all} twist-periodic vector-valued Laplace eigenfunctions and \emph{all} finite-dimensional complex vector spaces.
We give a detailed description of their Fourier coefficients, explicitly identifying each of their constituents and precisely determining their relevant growth properties.  
In what follows we provide a more detailed account of our main results, our motivation as well as an indication of the structural differences from the classical, untwisted case and the unitarily twisted case, including a brief discussion of the relation of our results to those in these more classical situations.

\subsection{Main results}
We use the upper half plane model
\begin{equation}\label{eq:intro_uhp}
\h \coloneqq \{ z\in\C \setmid \Ima z>0\}\,,\quad ds_z^2 \coloneqq
\frac{dz\,d\overline z}{(\Ima z)^2}
\end{equation}
of the hyperbolic plane, and we fix a finite-dimensional complex vector 
space~$V$ as well as an endomorphism $A\in\End(V)$ of $V$. We emphasize 
that~$A$ 
is \emph{not} required to be unitary. We let 
\[
\Delta \coloneqq -y^2 \bigl( \partial_x^2 + \partial_y^2\bigr)\,,\qquad 
(z=x+\iu y\in\h\,, x\in\R\,, y>0)\,,
\]
denote the hyperbolic Laplacian in the upper half plane model, and we 
consider~$\Delta$ as an operator on smooth $V$-valued functions on~$\h$.
We are interested in the Laplace eigenfunctions $f\colon\h\to V$, thus
\begin{equation}\label{eq:Delta_eigen_intro}
\Delta f = s(1-s)f\,,
\end{equation}
that satisfy the \emph{twist-periodicity} condition 
\begin{equation}\label{eq:twist_intro}
f(z+1) = A f(z)
\end{equation}
for all~$z\in\h$. We note that we could define the Laplacian~$\Delta$ on a much 
larger domain, e.g., $V$-valued eigendistributions. The elliptic regularity 
of~$\Delta$, however, immediately implies that all its eigendistributions are 
smooth (even real-analytic) 
eigenfunctions~\cite[Theorem~6.33]{folland1995introduction}. For that reason we 
will refrain from introducing this additional freedom which will not add any 
additional generality to our results, and we will discuss smooth functions 
only.
We emphasize that we do not assume any growth properties of these 
functions, and hence the \emph{spectral parameter}~$s$ of the Laplace 
eigenfunction~$f$ in~\eqref{eq:Delta_eigen_intro} can \emph{a priori} take any value in~$\C$.

We fix a basis of the complex vector space~$V$ with respect to which $A$ is represented by a matrix $J$ in (complex) \emph{Jordan-like normal form}:
\begin{equation}\label{eq:Jordan_intro}
J = 
\begin{pmatrix}
J_1 
\\
& \ddots 
\\
&  & J_p
\end{pmatrix}\,.
\end{equation}
Here, each block~$J_j$ is a square matrix whose only non-zero entries are on 
the diagonal and the superdiagonal and it is either of the form 
\begin{equation}\label{eq:intro_zero}
\begin{pmatrix} 
0 & 1 
\\
& \ddots & \ddots 
\\
& &  0 & 1
\\
& & &  0
\end{pmatrix}
\end{equation}
or of the form
\begin{equation}\label{eq:intro_lambda}
\begin{pmatrix}
\lambda_j & \lambda_j
\\
& \ddots & \ddots 
\\
& & \lambda_j & \lambda_j 
\\
& & & \lambda_j
\end{pmatrix}
\end{equation}
for some $\lambda_j\in\C$, $\lambda_j\not=0$. 
We remark that such a basis of~$V$ always exists as follows immediately from the existence of the standard Jordan normal form and elementary linear algebra.
By splitting~$V$ into a direct sum of subspaces  
\begin{equation}\label{eq:intro_splitV}
V = V_1 \oplus \ldots \oplus V_p
\end{equation}
corresponding to the generalized eigenspaces of the Jordan-like blocks 
in~\eqref{eq:Jordan_intro}, and also splitting the Laplace eigenfunction~$f$ analogously into a direct sum of functions 
\begin{equation}\label{eq:intro_splitf}
f = f_1 \oplus \ldots \oplus f_p
\end{equation}
with $f_j\colon \h\to V_j$, $j\in\{1,\ldots, p\}$, we may study the Fourier 
expansion of~$f$ separately for each Jordan-like block, or equivalently, 
separately for each~$f_j$. 

We will first observe that for the Jordan-like blocks with vanishing eigenvalue 
(thus, if we consider a block of the form~\eqref{eq:intro_zero}) the 
corresponding components of~$f$ vanish identically.

\begin{prop}[Vanishing eigenvalue]\label{prop:intro_vanish}
For each $j\in\{1,\ldots, p\}$ for which the eigenvalue of~$J_j$ is $0$, the 
component function~$f_j$ is the zero function. 
\end{prop}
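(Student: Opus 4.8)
The plan is to reduce to a single Jordan-ilk block of the form~\eqref{eq:intro_zero}, so assume $f\colon\h\to V$ is $V$-valued with $\Delta f = s(1-s)f$ and $f(z+1) = Nf(z)$, where $N$ is a nilpotent matrix (the shift on coordinates). The key structural fact is that $N^k = 0$ for $k = \dim V$. The idea is to bootstrap on the powers of $N$: since $f(z+1) = Nf(z)$, iterating gives $f(z+k) = N^k f(z) = 0$ for $z\in\h$ once $k\ge\dim V$. In other words, $f$ vanishes identically on the half-plane translated by any sufficiently large integer — but $f$ is a real-analytic function on all of~$\h$ (by elliptic regularity, as recalled in the introduction), and $\h$ is connected, so $f\equiv 0$.

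**Carrying it out.**

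First I would observe that~\eqref{eq:twist_intro} applied repeatedly yields, for every positive integer $k$ and every $z\in\h$, the identity $f(z+k) = A^k f(z)$; this is an immediate induction on~$k$. Second, restricting to the component~$f_j$ living on the generalized eigenspace $V_j$ of a block $J_j$ of the form~\eqref{eq:intro_zero}, the endomorphism acting there is nilpotent: if $J_j$ has size $m_j\times m_j$ then $J_j^{m_j} = 0$. Hence $f_j(z+m_j) = J_j^{m_j} f_j(z) = 0$ for all $z\in\h$, so $f_j$ vanishes on the open set $\{\,z\in\h \setmid \Ima z > 0,\ \Rea z > m_j\,\}$ — more precisely, on the translate $\{\,w + m_j \setmid w\in\h\,\} = \h$ itself as a set of points, but the vanishing is only asserted at those $z$ whose $x$-coordinate exceeds... no: in fact it holds for \emph{all} $z\in\h$, since every $z\in\h$ can be written as $w+m_j$ with $w = z - m_j\in\h$. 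So actually $f_j\equiv 0$ on all of~$\h$ directly, with no analytic continuation needed.

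**The (non-)obstacle.**

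I initially expected the main obstacle to be promoting vanishing on a half-plane to vanishing everywhere via real-analyticity, but on reflection the nilpotency makes even that step unnecessary: the translation $z\mapsto z+m_j$ is a bijection of~$\h$ onto itself, so $f_j(z+m_j) = 0$ for all $z\in\h$ is literally the statement $f_j\equiv 0$. The only genuinely careful point is bookkeeping the block decomposition~\eqref{eq:intro_splitV}--\eqref{eq:intro_splitf}: one must check that the direct sum decomposition of $V$ into generalized eigenspaces is $A$-invariant (true by definition of Jordan form) and that $\Delta$ acts diagonally with respect to it (true because $\Delta$ acts componentwise on $V$-valued functions and the splitting of $V$ is independent of $z$), so that the system~\eqref{eq:sumsetup_intro} indeed decouples into one copy per block, with $A$ replaced by $J_j$ on the $j$-th summand. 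Granting that reduction — which the paper sets up before the proposition — the argument for a single nilpotent block is the two-line computation above.
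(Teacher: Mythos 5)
Your proof is correct and matches the paper's argument exactly: iterate $f(z+1) = Af(z)$ to get $f(z+d) = A^d f(z) = 0$ for $d = \dim V_j$, and observe that translation by $d$ is a bijection of $\h$, so $f_j$ vanishes identically. You also correctly notice that the real-analyticity detour you first contemplated is unnecessary, and your bookkeeping of the block reduction mirrors what the paper does in Section~\ref{sec:reduction}.
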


For $j\in\{1,\ldots, p\}$ for which the Jordan-like block~$J_j$ is of the 
form~\eqref{eq:intro_lambda}, the main constituents of the Fourier expansion 
of~$f_j$ are extensions and adaptations of those from the untwisted case (i.e., $A$ is the identity), which we briefly survey in Section~\ref{sec:motivation}.
First of all, to be able to allow any nonzero value for the eigenvalue~$\lambda_j$ of the Jordan-like block~$J_j$, we may need to choose a nonstandard branch cut for 
the holomorphic logarithm, say $\omega_j\R_{\geq 0}$, and adapt the modified 
Bessel functions~$I$ and~$K$ accordingly. For the sake of readability of this 
introduction, we invite all readers to ignore this subtlety for the moment and 
refer to Sections~\ref{sec:cut} and~\ref{sec:Bessel} for all details. However, 
we will reflect this issue in the notation of the modified Bessel functions of 
the first and second kind, by writing $I_\nu(\,\cdot\,; \omega_j)$ and 
$K_\nu(\,\cdot\,;\omega_j)$ instead of~$I_\nu$ and~$K_\nu$, respectively. 

After this adaptation, all constituents can be built up from either the modified 
Bessel functions~$I_\nu(\,\cdot\,;\omega_j)$ and~$K_\nu(\,\cdot\,;\omega_j)$, 
or as linear combinations of~$y^a$ and~$y^a\log y$ for suitable values 
of~$a\in\pm s + \tfrac12\Z$, depending on~$s$. More precisely, for any 
$\alpha\in\C\smallsetminus\omega_j\R_{\geq 0}$, $m\in\N_0$ and $y>0$ we set 
\begin{align}
\fu_j(m,\alpha,y,s) & \coloneqq 
\frac{y^{\frac12}}{\iu^m}\partial^m_\alpha\left( I_{s-\frac12}(\alpha 
y;\omega_j)\right) 
\label{eq:intro_fu}
\intertext{and}
\futoo_j(m,\alpha,y,s) & \coloneqq  
\frac{y^{\frac12}}{\iu^m}\partial^m_\alpha\left( K_{s-\frac12}(\alpha 
y;\omega_j)\right)\,.
\label{eq:intro_futoo}
\end{align}
As we will see, these maps are the constituents for almost all Fourier 
coefficients. We emphasize that in~\eqref{eq:intro_fu} 
and~\eqref{eq:intro_futoo}, the differential operator~$\partial_\alpha^m$ is 
not the plain $m$-th derivative of~$\Psi_{s-\frac12}(\,\cdot\,;\omega_j)$ for 
$\Psi\in\{I,K\}$, but acts on the map
\[
\alpha \mapsto \Psi_{s-\frac12}(\alpha y; \omega_j)\,,
\]
as shall be indicated by the additional brackets around 
$\Psi_{s-\frac12}(\alpha y;\omega_j)$. Further, we remark that it is always 
possible to choose the branch cut, and hence the $\omega_j$, uniformly for all 
$j$ so that the definitions of $\fu_j$ and $\futoo_j$ in~\eqref{eq:intro_fu} 
and~\eqref{eq:intro_futoo} only depend on the eigenvalues of~$A$ but not on the 
sizes of the Jordan-like blocks. 

The zeroth Fourier coefficient, however, may correspond to the case that $\alpha=0$ 
and, as in the untwisted case, is then formed in a different way. In the 
untwisted case the value~$\frac12$ for the spectral parameter~$s$ is special 
because of a resonance situation in a certain differential 
equation (see~\eqref{eq:Bessel_intro} below). In our twisted case, analogous 
resonance situations may occur, and a higher size of the Jordan-like block~$J_j$ 
causes a larger set of special values for the spectral parameter, namely
\begin{equation}\label{eq:intro_special}
E_j \coloneqq \left\{ \frac12\pm k \setmid  k=0,\ldots, 
\left\lfloor\frac{d_j-1}{2}\right\rfloor\right\}\,, 
\end{equation}
where $d_j$ is the (complex) dimension of the vector space~$V_j$. For any 
$m\in\N_0$ and $y>0$ we define 
\[
\fu_j(m,0,y,s)
\]
to be a certain precise linear combination of 
\[
y^{s+2\left\lfloor\frac{m}{2}\right\rfloor}\quad\text{and}\quad 
y^{s+2\left\lfloor\frac{m}{2}\right\rfloor}\log y
\]
if 
\[
s\in E_j \cap \left[ \frac12-\frac{m}2, \frac12 \right]\,,
\]
and to be a certain precise scalar multiple of 
\[
y^{s+2\left\lfloor\frac{m}{2}\right\rfloor}
\]
for all other values of~$s$. We refer to Section~\ref{sec:IandKpart2} for the 
precise expressions, and remark here only that the definition of 
$\fu_j(m,0,y,s)$ depends on the size of the Jordan-like block~$J_j$ and hence 
cannot be made uniform for all Jordan-like blocks of~$A$, in contrast to the 
definition in~\eqref{eq:intro_fu}. Analogously we define 
\[
\futoo_j(m,0,y,s)
\]
to be a certain precise linear combination of 
\[
y^{1-s+2\left\lfloor\frac{m}{2}\right\rfloor}\quad\text{and}\quad 
y^{1-s+2\left\lfloor\frac{m}{2}\right\rfloor}\log y
\]
if
\[
s\in E_j\cap \left[ \frac32, \frac12 + \frac{m}{2}\right]\,,
\]
and to be a certain scalar multiple of 
\[
y^{1-s+2\left\lfloor \frac{m}{2} \right\rfloor}
\]
for all other values of~$s$. As for~$\fu_j(\cdot,0,\cdot,\cdot)$, the 
definition of~$\futoo_j(\cdot,0,\cdot,\cdot)$ cannot be made independent of~$j$. 
For each $n\in\Z$ we set 
\begin{equation}\label{eq:defalphan}
\alpha_{j,n} \coloneqq 2\pi n - \iu\log(\lambda_j;\omega_j)\,,
\end{equation}
where $\log(\;\cdot\;;\omega_j)$ is the complex logarithm with branch 
cut~$\omega_j\R_{\geq 0}$ (see Section~\ref{sec:cut} for details). Further we 
pick $\eps_{j,n}\in\{0,1\}$ such that 
\begin{equation}\label{eq:pickepsn}
(-1)^{\eps_{j,n}}\alpha_{j,n} \notin \omega_j\R_{>0}\, ,  
\end{equation}
and set 
\begin{equation}\label{eq:deftildealphan}
\tilde\alpha_{j,n} \coloneqq (-1)^{\eps_{j,n}}\alpha_{j,n}\,.
\end{equation}

\begin{thm}[Single Jordan block; coarse statement]\label{thm:intro_single}
Let $j\in\{1,\ldots,p\}$ be such that the eigenvalue~$\lambda_j$ of $J_j$ is 
not $0$. Then the Fourier expansion of $f_j$ is 
\[
f_j(z) = \sum_{n\in\Z} J_j^x \wh f_{j,n}(y,s) e^{2\pi \iu n x} \qquad (z=x+\iu 
y\in\h)\,,
\]
where the Fourier coefficient function~$\wh f_{j,n}(\cdot, s)$ for $n\in\Z$ is 
of the form 
\begin{align*}
\wh f_{j,n}(y,s) & = \tilde C_n 
\begin{pmatrix} 
\fu_j(d_j-1, \tilde\alpha_{j,n}, y, s)
\\
\vdots
\\
\fu_j(1, \tilde\alpha_{j,n}, y, s)
\\
\fu_j(0, \tilde\alpha_{j,n}, y, s)
\end{pmatrix}
+ \tilde D_n 
\begin{pmatrix}
\futoo_j(d_j-1, \tilde\alpha_{j,n}, y, s)
\\
\vdots 
\\
\futoo_j(1, \tilde\alpha_{j,n}, y, s)
\\
\futoo_j(0, \tilde\alpha_{j,n}, y, s)
\end{pmatrix}
\end{align*}
with $\tilde C_n, \tilde D_n$ being appropriate matrices in~$\C^{d_j\times 
d_j}$.
\end{thm}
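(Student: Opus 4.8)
The plan is to convert the twist-periodicity~\eqref{eq:twist_intro} into genuine $1$-periodicity by an explicit substitution and then to reduce the eigenvalue equation~\eqref{eq:Delta_eigen_intro} to a decoupled family of matrix ordinary differential equations, one per Fourier mode, each of which is a nilpotent perturbation of the scalar modified Bessel equation~\eqref{eq:Bessel_intro}. As $\lambda_j\neq0$, I would write $J_j=\lambda_j(I+N_j)$ with $N_j$ the nilpotent superdiagonal $1$-shift and set $\Log J_j\coloneqq\log(\lambda_j;\omega_j)\,I+L_j$, where $L_j\coloneqq\Log(I+N_j)=N_j-\tfrac12N_j^2+\cdots$ is nilpotent, strictly upper triangular and independent of the branch choice (cf.\ Section~\ref{sec:cut}); then $J_j^x\coloneqq\exp(x\,\Log J_j)$ has entries $\lambda_j^x$ times polynomials in $x$ and satisfies $J_j^{x+1}=J_jJ_j^x$ and $\partial_xJ_j^x=(\Log J_j)J_j^x=J_j^x\,\Log J_j$. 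Putting $g_j(z)\coloneqq J_j^{-x}f_j(z)$ one obtains a smooth function with
\[
 g_j(z+1)=J_j^{-(x+1)}f_j(z+1)=J_j^{-(x+1)}J_jf_j(z)=g_j(z),
\]
hence $1$-periodic in $x$, so $g_j(x+\iu y)=\sum_{n\in\Z}\wh g_{j,n}(y)e^{2\pi\iu nx}$ with the series converging together with all derivatives locally uniformly in $y>0$; then $f_j(z)=\sum_{n\in\Z}J_j^x\wh g_{j,n}(y)e^{2\pi\iu nx}$, and it remains to identify $\wh g_{j,n}=\wh f_{j,n}(\,\cdot\,,s)$.

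For that I would substitute $f_j=J_j^xg_j$ into $\Delta f_j=s(1-s)f_j$ and use $\partial_x(J_j^x\,\cdot\,)=J_j^x(\partial_x+\Log J_j)(\,\cdot\,)$; cancelling the invertible factor $J_j^x$ yields $-y^2\bigl[(\partial_x+\Log J_j)^2+\partial_y^2\bigr]g_j=s(1-s)g_j$. Since $2\pi\iu nI+\Log J_j=\iu\alpha_{j,n}I+L_j$ (because $2\pi\iu n+\log(\lambda_j;\omega_j)=\iu\alpha_{j,n}$), the $n$-th Fourier coefficient satisfies the matrix ODE
\[
 y^2\wh g_{j,n}''+\bigl(s(1-s)\,I-y^2M_{j,n}^2\bigr)\wh g_{j,n}=0,\qquad M_{j,n}\coloneqq\pm(\alpha_{j,n}I-\iu L_j),
\]
both signs giving the same $M_{j,n}^2=-(\iu\alpha_{j,n}I+L_j)^2$. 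On the last component of $V_j$ the nilpotent part of $M_{j,n}^2$ acts trivially, so there the equation reduces to the scalar modified Bessel equation~\eqref{eq:Bessel_intro} with parameter $\alpha_{j,n}$; on the higher components it is the same equation perturbed by terms that involve only lower components, so the system is triangular and can be solved from the bottom up.

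To solve it I would exploit that, $L_j$ being nilpotent, $M_{j,n}$ has the single eigenvalue $\pm\alpha_{j,n}$; choosing the sign $(-1)^{\eps_{j,n}}$ so that this eigenvalue $\tilde\alpha_{j,n}$ avoids the cut $\omega_j\R_{\ge0}$ — precisely the region in which the $\alpha$-derivatives defining $\fu_j,\futoo_j$ in~\eqref{eq:intro_fu}--\eqref{eq:intro_futoo} make sense — a finite Taylor expansion of the holomorphic functional calculus about $\tilde\alpha_{j,n}$ shows that, for $\Psi\in\{I,K\}$, the map $y\mapsto y^{1/2}\Psi_{s-\frac12}(M_{j,n}y;\omega_j)$ solves the matrix ODE and is a constant-matrix linear combination, with coefficients built from powers of $L_j$, of the functions $\fuPsi_j(m,\tilde\alpha_{j,n},\,\cdot\,,s)$, $0\le m\le d_j-1$, where $\fuPsi_j\in\{\fu_j,\futoo_j\}$ corresponds to $\Psi$. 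For the at most one index $n$ with $\alpha_{j,n}=0$ this fails, the modified Bessel functions not being holomorphic at the origin; there I would integrate the triangular Euler-type system directly, getting $y^s,y^{1-s}$ (or $y^{1/2},y^{1/2}\log y$ at $s=\tfrac12$) on the last component and, climbing the filtration, the power/logarithm functions $\fu_j(m,0,\,\cdot\,,s),\futoo_j(m,0,\,\cdot\,,s)$ of Section~\ref{sec:IandKpart2}, with logarithms entering precisely for the resonant spectral parameters $s\in E_j$ from~\eqref{eq:intro_special}. In either case the $2d_j$ functions so obtained form a fundamental system of the $2d_j$-dimensional solution space, so $\wh g_{j,n}$ is a linear combination of them with coefficient vectors $\mathbf c_n,\mathbf d_n\in V_j$; arranging the vectors $\tfrac1{m!}L_j^m\mathbf c_n$ for $m=d_j-1,\dots,0$ as the columns of a matrix $\tilde C_n\in\C^{d_j\times d_j}$, and likewise $\tilde D_n$ from $\mathbf d_n$, turns this into the asserted expression, which since $\wh f_{j,n}(\,\cdot\,,s)=\wh g_{j,n}$ finishes the proof.

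The main obstacle will be this final step: carrying out the functional calculus with the branch-cut-dependent modified Bessel functions cleanly, tracking the sign choices $\eps_{j,n}$ consistently over all blocks, isolating the resonant case $\alpha_{j,n}=0$ together with the further resonances $s\in E_j$ inside it, and — most of all — proving that the $2d_j$ candidate solutions are linearly independent and hence span, which is where the detailed analysis of these fundamental systems carried out in Sections~\ref{sec:Bessel} and~\ref{sec:IandKpart2} must be invoked.
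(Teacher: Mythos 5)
Your proposal is correct, and it takes a genuinely different route from the paper's at two points. First, by cancelling the invertible factor $J_j^x$ \emph{before} taking Fourier coefficients you work with the $1$-periodic function $g_j$, whose Fourier modes $e^{2\pi\iu nx}$ are orthogonal; the paper instead keeps the expansion $\sum_n J_j^x\hat f_n(y)e^{2\pi\iu nx}$ whose terms are \emph{polynomials} in $x$ times exponentials and are therefore not orthogonal, forcing it to prove and invoke a polynomial-vanishing lemma (Lemma~\ref{fourier_observation}) in order to equate the per-mode terms. Your route sidesteps that lemma entirely, at the cost of having to verify that the differential operator $(\partial_x+\Log J_j)^2+\partial_y^2$ commutes with the termwise Fourier expansion of $g_j$, which is exactly the content of Lemma~\ref{lem:diff}. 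Second, to solve the per-mode system you feed the nilpotent-plus-scalar matrix $M_{j,n}$ into the Bessel functions via holomorphic functional calculus and read off the $\fu_j$, $\futoo_j$ as the finite-Taylor-expansion coefficients; the paper instead conjugates the system by the Stirling matrix~$T$ to get the three-band form of Proposition~\ref{simplified_system} and then verifies the candidate solutions by direct recursion (Lemmas~\ref{zeroth}--\ref{lem:genth}). The two are, as you suspect, equivalent — the Stirling conjugation is precisely what makes the nilpotent perturbation legible entrywise — and your functional-calculus phrasing is conceptually slicker, while the paper's explicit conjugation is what makes the refined coefficient spaces $\Coeff_{0;d}$, $\Coeff_{1;d}$ in Theorem~\ref{Fourier:main_theorem} visible. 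One small correction: the columns of $\tilde C_n$ carry a sign $(-1)^{\eps_{j,n}m}$ coming from $M_{j,n}-\tilde\alpha_{j,n}I=-(-1)^{\eps_{j,n}}\iu L_j$, so they are $\tfrac{(-1)^{\eps_{j,n}m}}{m!}L_j^m\mathbf c_n$ rather than $\tfrac1{m!}L_j^m\mathbf c_n$; this is exactly the $S^{\eps_n}$ factor in the paper's refined statement. Finally, note that the linear-independence step you flag as the main obstacle is settled in the paper not through Sections~\ref{sec:Bessel} or~\ref{sec:IandKpart2} but through the large-$y$ asymptotics of $\fu_j$ and $\futoo_j$ in Section~\ref{sec:growthIK} (Propositions~\ref{prop:asymp_fu} and~\ref{prop:asymp_futoo}), which comparison of growth rates then makes short work of (Proposition~\ref{prop:basis_sol_nonzero}).
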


We refer to Sections~\ref{sec:coeffmatrices} and~\ref{sec:refined} for precise 
statements and the refined statement of Theorem~\ref{thm:intro_single}, in 
particular to Theorem~\ref{Fourier:main_theorem}. 
Moreover, we will see that the matrices $\tilde C_n$ and $\tilde D_n$ in 
Theorem~\ref{thm:intro_single} belong to a certain precise subspace of 
$\C^{d_j\times d_j}$ of dimension~$2d_j$ if $\lambda_j=1$, $n=0$ (i.e., if 
$\alpha_{j,n}=0$) and $d_j>1$, and of dimension~$d_j$ in all other cases. See 
Section~\ref{sec:coeffmatrices} and Theorem~\ref{Fourier:main_theorem}. 

We emphasize that in the case that the endomorphism~$A$ is the identity, we have $\lambda_j=1$ and hence $\alpha_{j,n}=2\pi n$. We then may pick~$\omega_j=-1$ so 
that $\tilde\alpha_{j,n} = 2\pi|n|$ and hence Theorem~\ref{thm:intro_single} reduces to the classical result stated in~\eqref{eq:untw_coeff_intro}. 
Our desire to be able to recover the classical result from Theorem~\ref{thm:intro_single} is indeed the main motivation to introduce the $\eps_{j,n}$-family in~\eqref{eq:pickepsn}.
See Section~\ref{sec:motivation} for more details.
In particular, in this case, the eigenvalue~$\lambda_j=1$ makes no visible appearance (as the relevant term~$\log(\lambda_j;\omega_j)$ vanishes). However, if $A$ is not the identity and hence the eigenvalue~$\lambda_j$ is not~$1$ (for some~$j\in\{1,\ldots, p\}$), then the modified Bessel functions have to account for a non-periodicity or a twist-periodicity in the~$y$-direction of the function~$f_j$, which results in the dependence of the Bessel functions on~$\lambda_j$ (and hence~$\tilde\alpha_{j,n}$) as stated in~\eqref{eq:intro_fu}--\eqref{eq:intro_futoo}.

The Fourier expansion of~$f$ can now be deduced by combining 
Proposition~\ref{prop:intro_vanish} and Theorem~\ref{thm:intro_single}.

\begin{thm}[Several Jordan blocks]\label{thm:intro_general}
The Fourier expansion of~$f$ is 
\[
f(z) = \sum_{n\in\Z} J^x \hat f_n(y,s) e^{2\pi \iu n x}\qquad (z=x+\iu 
y\in\h)\,,
\]
where the $n$-th Fourier coefficient is
\[
\hat f_n(y,s) = \begin{pmatrix} 
\wh f_{1,n}(y,s)
\\
\vdots 
\\
\wh f_{p,n}(y,s)
\end{pmatrix}
\]
with 
\[
\wh f_{j,n}(y,s) = 
\begin{cases} 
0 & \text{if $\lambda_j=0$}\,,
\\
\text{as in Theorem~\ref{thm:intro_single}} & \text{if $\lambda_j\not=0$}
\end{cases}
\]
for all $j\in\{1,\ldots, p\}$.
\end{thm}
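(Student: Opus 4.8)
The plan is to deduce this statement directly from Proposition~\ref{prop:intro_vanish} and Theorem~\ref{thm:intro_single} by checking that the two defining conditions in~\eqref{eq:sumsetup_intro} are compatible with the block decomposition~\eqref{eq:intro_splitV}. In the chosen basis, $A=J=\diag(J_1,\dots,J_p)$ is block diagonal, so each generalized eigenspace $V_j$ is $A$-invariant and the projection $\pr_j\colon V\to V_j$ along the remaining summands commutes with $A$. Since the hyperbolic Laplacian $\Delta$ is a scalar differential operator acting coordinatewise on $V$-valued functions, $\pr_j$ also commutes with $\Delta$. Writing $f=f_1\oplus\dots\oplus f_p$ with $f_j=\pr_j\circ f$ as in~\eqref{eq:intro_splitf}, the single system~\eqref{eq:sumsetup_intro} for $f$ therefore splits, by directness of the sum, into the $p$ independent systems
\[
 \Delta f_j = s(1-s)f_j\,,\qquad f_j(z+1)=J_j f_j(z)\qquad(z\in\h)\,,
\]
$j\in\{1,\dots,p\}$, where $J_j$ is either of the form~\eqref{eq:intro_zero} or of the form~\eqref{eq:intro_lambda}.

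For each $j$ with eigenvalue $\lambda_j=0$, i.e.\ with $J_j$ of the form~\eqref{eq:intro_zero}, Proposition~\ref{prop:intro_vanish} gives $f_j\equiv 0$, so its Fourier coefficient functions are identically zero, matching the first case in the statement. For each $j$ with $\lambda_j\neq 0$, Theorem~\ref{thm:intro_single} supplies the Fourier expansion $f_j(z)=\sum_{n\in\Z}J_j^x\,\wh f_{j,n}(y,s)\,e^{2\pi\iu nx}$ together with the asserted shape of $\wh f_{j,n}(\cdot,s)$ in terms of $\fu_j$ and $\futoo_j$; recall from the discussion following~\eqref{eq:intro_futoo} that a single branch cut can be chosen to serve all $j$ at once, so these expansions are set up consistently. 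It remains to reassemble the pieces: because $f=f_1\oplus\dots\oplus f_p$ and the summands $V_j$ are independent, the Fourier coefficients of $f$ in the $x$-direction are precisely the block vectors $\hat f_n(y,s)$ whose $j$-th block is $\wh f_{j,n}(y,s)$, and uniqueness of Fourier coefficients makes ``the Fourier expansion of $f$'' well defined and equal to this. Finally, since $J=\diag(J_1,\dots,J_p)$, the continuous extension of $m\mapsto J^m$ from Lemma~\ref{J_power_lemma} satisfies $J^x=\diag(J_1^x,\dots,J_p^x)$, so $J^x\hat f_n(y,s)$ has exactly the claimed block form and the expansion $f(z)=\sum_{n\in\Z}J^x\hat f_n(y,s)e^{2\pi\iu nx}$ follows.

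I do not expect a genuine obstacle here: all of the analytic substance---solving the modified Bessel equations, handling the branch cut, and identifying the constituents $\fu_j,\futoo_j$ and the coefficient matrices---has already been carried out in Proposition~\ref{prop:intro_vanish} and Theorem~\ref{thm:intro_single}. The only points needing (minor) care are the two compatibility checks: that $\Delta$ and the translation $z\mapsto z+1$ both respect the direct-sum decomposition, which is immediate from block-diagonality of $J$ and the coordinatewise action of $\Delta$; and that the matrix power $J^x$ decomposes blockwise, for which Lemma~\ref{J_power_lemma} is exactly what is needed. A slightly more delicate bookkeeping item, if one wants to be thorough, is to confirm that fixing a common branch cut does not alter any of the block-level conclusions of Theorem~\ref{thm:intro_single}, but this is already built into the way the $\omega_j$ and the constituents $\fu_j,\futoo_j$ were arranged.
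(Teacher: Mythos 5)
Your proposal is correct and follows the same route as the paper: decompose $V$ and $f$ block by block according to the Jordan-ilk normal form, invoke Proposition~\ref{prop:intro_vanish} for the zero-eigenvalue blocks and Theorem~\ref{thm:intro_single} (refined as Theorem~\ref{Fourier:main_theorem}) for the invertible blocks, and reassemble using the block-diagonality of $J^x$. The one place where the paper is slightly more careful is that Lemma~\ref{J_power_lemma} only defines $J_j^x$ when $\lambda_j\neq 0$, so for nilpotent blocks the paper explicitly fixes an arbitrary (possibly non-continuous) extension of $n\mapsto J_j^n$ -- harmless, since the corresponding $\wh f_{j,n}$ all vanish -- whereas your appeal to Lemma~\ref{J_power_lemma} for the full $J$ glosses over this.
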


As in the untwisted case, we can distinguish the functions~$\fu_j$ 
and~$\futoo_j$ by their growth properties. This result is certainly of 
independent interest and will also be used in the proof of 
Theorem~\ref{thm:intro_single}. Due to the sometimes necessary non-standard 
choice of a branch cut for the holomorphic logarithm and the related adaptations 
of the modified Bessel functions (depending on the endomorphism~$A$), the 
growth behavior of the functions~$\fu_j$ and~$\futoo_j$ may be qualitatively 
different depending on the different domains for~$\alpha$ 
in~\eqref{eq:intro_fu} and~\eqref{eq:intro_futoo}. In the ``principal sector'' 
(roughly, the sector in which the adapted and the classical modified Bessel 
functions are qualitatively equal) their growth behavior is as in the classical 
untwisted case, however outside of this sector it may change. We refer to 
Section~\ref{sec:growthIK} for the precise and detailed description of the 
situation and state here only a partial result.

\begin{thm}[Growth behavior, partial statement]\label{thm:intro_growth}
Let $s\in\C$, $m\in\N_0$, $j\in\{1,\ldots, p\}$, and suppose that 
$\alpha$ is an element of~$\C\nobreak\smallsetminus\nobreak\omega_j\R_{\geq0}$ 
in the ``principal sector.'' Then:
\begin{enumerate}[label=$\mathrm{(\roman*)}$, ref=$\mathrm{\roman*}$]
\item The absolute value of~$\fu_j(m,\alpha,y,s)$ increases exponentially 
as $y\to\infty$.
\item The absolute value of~$\futoo_j(m,\alpha,y,s)$ decreases exponentially 
as $y\to\infty$.
\end{enumerate}
\end{thm}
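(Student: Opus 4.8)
The plan is to reduce the statement, via the chain rule, to the classical large-argument behavior of the modified Bessel functions and of their derivatives. First I would unwind the definitions~\eqref{eq:intro_fu}--\eqref{eq:intro_futoo}: since $\partial_\alpha^m$ acts on the map $\alpha\mapsto\Psi_{s-\frac12}(\alpha y;\omega_j)$, the chain rule gives $\partial_\alpha^m\bigl(\Psi_{s-\frac12}(\alpha y;\omega_j)\bigr)=y^{m}\,\Psi_{s-\frac12}^{(m)}(\alpha y;\omega_j)$, where $\Psi_{s-\frac12}^{(m)}(\,\cdot\,;\omega_j)$ denotes the $m$-th derivative of the (adapted) Bessel function in its argument variable. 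Hence
\[
 \fu_j(m,\alpha,y,s)=\frac{y^{m+\frac12}}{\iu^{m}}\,I_{s-\frac12}^{(m)}(\alpha y;\omega_j),\qquad
 \futoo_j(m,\alpha,y,s)=\frac{y^{m+\frac12}}{\iu^{m}}\,K_{s-\frac12}^{(m)}(\alpha y;\omega_j),
\]
so, using $|\iu^{m}|=1$, the claim amounts to controlling $\bigl|I_{s-\frac12}^{(m)}(\alpha y;\omega_j)\bigr|$ and $\bigl|K_{s-\frac12}^{(m)}(\alpha y;\omega_j)\bigr|$ as $y\to\infty$ with $\alpha$ fixed.

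Next I would invoke the description of the principal sector from Section~\ref{sec:growthIK} together with the construction of the adapted Bessel functions in Section~\ref{sec:Bessel}: on the ray $\R_{>0}\alpha$ the functions $I_\nu(\,\cdot\,;\omega_j)$, $K_\nu(\,\cdot\,;\omega_j)$ are analytic continuations of the classical $I_\nu$, $K_\nu$, differing only by a rotation $\zeta_j$ of the argument (with $\zeta_j=1$ for the identity twist) that brings the cut $\omega_j\R_{\geq0}$ into standard position, and by the very definition of the principal sector one has $c\coloneqq\Rea(\zeta_j\alpha)>0$; in particular $\arg(\zeta_j\alpha)$ is a fixed angle with $|\arg(\zeta_j\alpha)|\le\tfrac\pi2-\delta$ for some $\delta>0$. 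So it suffices to understand the classical $m$-th derivatives as $w=\zeta_j\alpha y\to\infty$ inside the half-plane $\Rea w>0$. For these I would use the derivative recurrences $I_\nu'=\tfrac12(I_{\nu-1}+I_{\nu+1})$, $K_\nu'=-\tfrac12(K_{\nu-1}+K_{\nu+1})$, which iterate to
\[
 I_\nu^{(m)}(w)=2^{-m}\sum_{k=0}^{m}\binom{m}{k}I_{\nu-m+2k}(w),\qquad
 K_\nu^{(m)}(w)=(-1)^m2^{-m}\sum_{k=0}^{m}\binom{m}{k}K_{\nu-m+2k}(w).
\]

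Now I would feed in the standard asymptotics $I_\mu(w)=\frac{e^{w}}{\sqrt{2\pi w}}\bigl(1+O(w^{-1})\bigr)$ (dominant for $|\arg w|\le\tfrac\pi2-\delta$) and $K_\mu(w)=\sqrt{\tfrac{\pi}{2w}}\,e^{-w}\bigl(1+O(w^{-1})\bigr)$, uniformly for the finitely many orders $\mu\in\{s-\tfrac12-m,\dots,s-\tfrac12+m\}$. Since $2^{-m}\sum_k\binom mk=1$ and all binomial weights are positive, no cancellation occurs among the leading terms, so $I_{s-\frac12}^{(m)}(w)=\frac{e^{w}}{\sqrt{2\pi w}}\bigl(1+o(1)\bigr)$ and $K_{s-\frac12}^{(m)}(w)=(-1)^m\sqrt{\tfrac{\pi}{2w}}\,e^{-w}\bigl(1+o(1)\bigr)$ as $w\to\infty$ in that half-plane; the subdominant $\sim e^{-w}$ companion term of $I_\mu$, whose coefficient depends on $\mu$ and hence need not cancel in the $k$-sum, is harmless because $\Rea w=cy\to+\infty$. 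Substituting $w=\zeta_j\alpha y$ and using $|\zeta_j|=1$, $|e^{w}|=e^{cy}$ and $|w|=|\alpha|y$ then yields
\[
 \bigl|\fu_j(m,\alpha,y,s)\bigr|=\frac{y^{m}}{\sqrt{2\pi|\alpha|}}\,e^{cy}\bigl(1+o(1)\bigr),\qquad
 \bigl|\futoo_j(m,\alpha,y,s)\bigr|=\sqrt{\tfrac{\pi}{2|\alpha|}}\;y^{m}\,e^{-cy}\bigl(1+o(1)\bigr)
\]
as $y\to\infty$. Because $c>0$, the first expression grows and the second decays exponentially, which is exactly assertions~(i) and~(ii); in fact one gets two-sided bounds $c_1y^me^{cy}\le|\fu_j|\le c_2y^me^{cy}$ and $c_1y^me^{-cy}\le|\futoo_j|\le c_2y^me^{-cy}$ for $y$ large and suitable $0<c_1<c_2$.

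I expect the only genuinely delicate part to be the second paragraph: pinning down the paper's definition of the principal sector so that the rotation $\zeta_j$ really places the argument in the open half-plane $\Rea w>0$ with the classical asymptotics (and their ranges of validity) intact; once that bookkeeping is done, the rest is routine manipulation of known expansions. As an alternative to the derivative recurrences, one may instead differentiate term by term the asymptotic expansions of $e^{-w}I_\nu(w)$ and $e^{w}K_\nu(w)$ in powers of $w^{-1}$, which is legitimate since these expansions hold in full sectors and the underlying functions are holomorphic; this leads to the same conclusion.
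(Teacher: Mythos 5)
Your three steps — the chain rule $\partial_\alpha^m\bigl(\Psi_{s-\frac12}(\alpha y;\omega)\bigr)=y^m\Psi_{s-\frac12}^{(m)}(\alpha y;\omega)$, the derivative recurrences $I_\nu^{(m)}=2^{-m}\sum_k\binom{m}{k}I_{\nu-m+2k}$, $K_\nu^{(m)}=(-1)^m2^{-m}\sum_k\binom{m}{k}K_{\nu-m+2k}$, and the classical large-argument asymptotics — are precisely the steps used in the paper's proof of Propositions~\ref{prop:asymp_fu} and~\ref{prop:asymp_futoo}, which are the refined, all-sector version of Theorem~\ref{thm:intro_growth}. So the route is the same.

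However, the second paragraph contains a genuine gap. First, the adapted Bessel functions are \emph{not} obtained from the classical ones by rotating the argument. By~\eqref{I_prop_def} and~\eqref{K_prop_def1}, on the principal sector ($\arg z\in\Omega_\omega$) one has $I_\eta(z;\omega)=I_\eta(z)$ and $K_\eta(z;\omega)=K_\eta(z)$ outright, and off it they differ by a multiplicative phase $e^{-2\pi\iu\eta\sgn(\arg\omega)}$ in the $I$-terms — not a rotation of $z$. There is no $\zeta_j$ to introduce; you should simply set $w=\alpha y$ and apply the classical asymptotics directly. Second, and more consequentially, the assertion that ``by the very definition of the principal sector one has $c\coloneqq\Rea(\zeta_j\alpha)>0$'' is not the paper's definition and does not follow from it: the principal sector is $\{\arg\alpha\in\Omega_\omega\}$, and by~\eqref{Omega_definition} the set $\Omega_\omega$ has angular measure $\pi+|\arg\omega|>\pi$, so it always contains directions with $\Rea\alpha\le 0$. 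For such $\alpha$ the asymptotic $\futoo(m,\alpha,y,s)\sim\sqrt{\pi/(2\alpha)}\,\iu^m y^m e^{-\alpha y}$ from Proposition~\ref{prop:asymp_futoo} gives exponential \emph{growth}, not decay — part (ii) of the theorem requires the extra hypothesis $\Rea\alpha>0$, which is not implied by being in the principal sector and which you have smuggled in via the (incorrect) rotation picture. Your computation from the third paragraph onward is fine once $\Rea\alpha>0$ is granted, but the link from ``principal sector'' to that inequality is exactly the delicate point you flagged, and it is where the argument as written breaks down.
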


In what follows we will first discuss, in Section~\ref{sec:motivation}, the mathematical context and the motivation of our investigations. After that, in Section~\ref{sec:survey} we will provide a survey of the proofs of the main theorems. Of course, the following two sections can also be read in converse order.

\subsection{Motivation and mathematical context}\label{sec:motivation}

\subsubsection{Brief review of the classical, untwisted case}
If the endomorphism~$A$ is the identity on~$V$, then the 
system~\eqref{eq:Delta_eigen_intro}-\eqref{eq:twist_intro} asks for the Laplace 
eigenfunctions~$f\colon\h\to\nobreak V$ with spectral parameter~$s\in\C$ that are 
periodic with period~$1$ in the $x$-direction. Thus, 
\eqref{eq:Delta_eigen_intro}-\eqref{eq:twist_intro} becomes 
\begin{equation}\label{eq:setup_untw_intro}
\Delta f = s(1-s)f\qquad\text{and}\qquad f(z+1) = f(z)\quad\text{for all $z\in\h$\,.}
\end{equation}
In this classical, untwisted case the existence and structure of the Fourier 
expansion of~$f$ is well-known (see, e.g.,~\cite[Sections~1.2 and~8.1]{BLZm}).  
The periodicity of~$f$ causes the Fourier expansion to take the form
\begin{equation}\label{eq:untw_exp_intro}
f(z) = f(x+\iu y) = \sum_{n\in\Z} \hat f_n(y,s)e^{2\pi \iu n x}\qquad 
(z\in\h)\,. 
\end{equation}
As $f$ is a Laplace eigenfunction, the $n$-th Fourier coefficient 
function~$\hat f_n(\cdot,s)$ necessarily satisfies the modified Bessel 
differential equation 
\begin{equation}\label{eq:Bessel_intro}
\left( y^2 \partial_y^2 + s(1-s) - (2\pi n y)^2 \right) \hat f_n(y,s) = 0\,, 
\end{equation}
which depends on~$n$ and $s$. If $\dim V = 1$ (i.e., the scalar case), and 
$n\not=0$, then a fundamental set of solutions of the differential 
equation~\eqref{eq:Bessel_intro} is given by the two functions
\begin{equation}\label{eq:base_scalar_intro}
y\mapsto y^{\frac12} I_{s-\frac12}(2\pi |n| y) \qquad\text{and}\qquad y\mapsto 
y^{\frac12} K_{s-\frac12}(2\pi |n| y)\,,
\end{equation}
which are both defined on the interval~$(0,\infty)$. Here $I_{s-\frac12}$ 
and~$K_{s-\frac12}$ are two specific linearly independent solutions of the 
modified Bessel differential equation, commonly known as the \emph{modified 
Bessel functions of the first and second kind with index~$s-\tfrac12$}, 
respectively. Among their important features are their growth behaviors 
as~$y\to\infty$:  the function $I_{s-\frac12}$ is exponentially increasing, and 
$K_{s-\frac12}$ is exponentially decreasing. These growth properties are passed 
on to the functions in~\eqref{eq:base_scalar_intro}. Theorem~\ref{thm:intro_growth} and its refined variants in Section~\ref{sec:growthIK} state analogous properties in the twisted case. For $\dim V = 1$ and $n=0$, two independent 
solutions of~\eqref{eq:Bessel_intro} are $y^s$ and $y^{1-s}$ if 
$s\not=\tfrac12$, and $y^\frac12$ and $y^\frac12\log y$ if $s=\tfrac12$.
For a vector space~$V$ of arbitrary finite dimension, the Fourier coefficient 
functions in~\eqref{eq:untw_exp_intro} are therefore of the form 
\begin{equation}\label{eq:untw_coeff_intro}
\hat f_n(y,s) =
\begin{cases}
c_ny^{\frac12}I_{s-\frac12}(2\pi |n| y)  + d_ny^{\frac12}K_{s-\frac12}(2\pi 
|n| y)\,, & \text{$n\not=0$,}
\\
c_0y^s + d_0 y^{1-s}\,, & \text{$n=0$, $s\not=\frac12$,}
\\
c_0y^{\frac12} + d_0 y^{\frac12}\log y\,, & \text{$n=0$, $s=\frac12$}
\end{cases}
\end{equation}
for any~$n\in\Z$, where the coefficients $c_n,d_n$ are suitable elements 
of~$V$, depending on~$f$. 

\subsubsection{Compatibility of Theorem~\ref{thm:intro_general} with the classical, untwisted result}

Theorem~\ref{thm:intro_general} and its full version,  Theorem~\ref{Fourier:main_theorem}, are compatible with the 
classical result~\eqref{eq:untw_coeff_intro} in the untwisted setting, as we now indicate.
Since the endomorphism~$A$ is the identity and hence all its eigenvalues equal~$1$, we may choose $\omega=\omega_j=-1$ (see right after Proposition~\ref{prop:intro_vanish}), which retrieves the principal logarithm. Then all adapted modified Bessel functions are identical to the classical modified Bessel functions and $\alpha_n=2\pi n$ for all~$n\in\Z$ (see~\eqref{eq:defalphan}).
For $n>0$ we necessarily pick $\eps_n=0$ in~\eqref{eq:pickepsn}, 
and for $n<0$ we necessarily pick $\eps_n=1$. This results in 
\[
 \tilde\alpha_n=(-1)^{\eps_n}\alpha_n = 2\pi |n|
\]
for all $n\in\Z$. See~\eqref{eq:deftildealphan}.
From this it immediately follows that Theorem~\ref{Fourier:main_theorem} provides the classical result. 

At this point we can explain more precisely our main motivation for working with the $\eps_n$-family instead of using right away the values~$\alpha_n$. (See also the comment right after Theorem~\ref{thm:intro_single}.) In the introduction of Section~\ref{sec:proof_nonzero} we discuss an alternative method for handling negative~$\alpha_n$. If we would implement this method, then we would 
need to pick a value for~$\omega$ that is not in~$\R_{<0}$, but then we 
would be unable to reproduce the classical result.

\subsubsection{Motivation I, untwisted case}
To illustrate the importance of such precise knowledge of the structure of the 
Fourier expansion and the Fourier coefficients in the untwisted case as in~\eqref{eq:untw_coeff_intro}, we now briefly describe one of 
its many consequences: the growth thresholds of automorphic Laplace 
eigenfunctions at cusps. To that end we first note 
that~\eqref{eq:untw_coeff_intro} implies that the function~$f$ 
in~\eqref{eq:setup_untw_intro} and~\eqref{eq:untw_exp_intro} decomposes as
\begin{equation}\label{eq:decomp_f_intro}
f = f_I + f_0 + f_K
\end{equation}
with 
\begin{equation}\label{eq:fI_intro}
f_I(z) \coloneqq \sum_{\substack{n\in\Z\\ n\not=0}} 
c_ny^{\frac12}I_{s-\frac12}(2\pi |n|y)e^{2\pi \iu n x}\,,\qquad z=x+\iu 
y\in\h\,,
\end{equation}
growing exponentially in absolute value as $y\to\infty$ as soon as not all 
coefficients~$c_n$ vanish, 
\begin{equation}\label{eq:fK_intro}
f_K(z) \coloneqq \sum_{\substack{n\in\Z\\ n\not=0}} 
d_ny^{\frac12}K_{s-\frac12}(2\pi |n|y)e^{2\pi \iu n x}\,, \qquad z=x+\iu 
y\in\h\,,
\end{equation}
decaying exponentially as $y\to\infty$ as soon as not all coefficients~$d_n$ 
vanish, and
\begin{equation}\label{eq:f0_intro}
f_0(z) \coloneqq 
\begin{cases}
c_0y^s + d_0y^{1-s} & \text{for $s\not=\frac12$}\,,
\\
c_0y^{\frac12} + d_0y^{\frac12}\log y & \text{for $s=\frac12$}
\end{cases}
\end{equation}
with a growth rate depending on~$s$ but being at most polynomial as 
$y\to\infty$.

We now let $\Gamma$ be a discrete subgroup of~$\SL_2(\R)$ (or~$\PSL_2(\R)$, 
if we want to consider Fuchsian groups); examples include $\Gamma=\SL_2(\Z)$ (or 
$\Gamma=\PSL_2(\Z)$). Then $\Gamma$ acts isometrically on the upper half 
plane~$\h$ by fractional linear transformations, 
\[
\mat{a}{b}{c}{d}.z = \frac{az + b}{cz + d}
\]
for~$\textmat{a}{b}{c}{d}\in\Gamma$, $z\in\h$. We suppose further that the 
hyperbolic surface\footnote{For simplicity, we refer to all of the orbit 
spaces~$\Gamma\backslash\h$ as hyperbolic surfaces, even though some of them 
are genuine orbifolds and not smooth manifolds. In this case, the Riemannian 
metric is to be understood as a Riemannian metric on an 
orbifold.}~$\Gamma\backslash\h$, that is the orbit space of the action 
of~$\Gamma$ on~$\h$ with the canonical Riemannian metric, has at least one 
infinite end of finite area, a so-called \emph{cusp}. In rough terms (and fully 
sufficient for our purposes),  a cusp can be described as follows: we take a 
(very large) compact subset $K$ of~$\Gamma\backslash\h$ such that for all 
compact subsets~$\tilde K$ with $K\subseteq\tilde K$ the 
spaces~$(\Gamma\backslash\h)\smallsetminus K$ 
and~$(\Gamma\backslash\h)\smallsetminus\tilde K$ have the same number of 
connected components. These connected components are the \emph{ends} 
of~$\Gamma\backslash\h$, and those of finite area are the \emph{cusps}. (It is 
irrelevant for our purposes that these notions of end and cusp depend on the 
choice of ~$K$ and should be more correctly called \emph{end area} and 
\emph{cusp area}, respectively.)  For example, the \emph{modular 
surface}~$\SL_2(\Z)\backslash\h$ has one cusp and no further ends. In many 
applications in harmonic analysis, number theory, and physics, one needs to 
understand the growth behavior at cusps of \emph{\mbox{$\Gamma$-}auto\-morphic 
functions}, that is of the functions~$u\colon\h\to V$ satisfying
\begin{equation}\label{eq:auto_intro}
u(g.z) = u(z) \qquad\text{for all $g\in\Gamma$, $z\in\h$}\,,
\end{equation}
that are simultaneously Laplace eigenfunctions, thus
\begin{equation}\label{eq:Delta_u_intro}
\Delta u = s(1-s)u\,.
\end{equation}
In particular, Maass wave forms for~$\Gamma$, real-analytic Eisenstein series, 
and $\Gamma$-auto\-morphic forms of weight~$0$ are examples of such automorphic 
Laplace eigenfunctions. We remark that such functions  
satisfying~\eqref{eq:auto_intro} and~\eqref{eq:Delta_u_intro} are sometimes 
called \mbox{$\Gamma$-}auto\-morphic \emph{forms}. However, since we do not 
require any growth properties or regularity properties at cusps or other ends 
of the hyperbolic surfaces, we will refer to these objects as automorphic 
\emph{functions} despite the standing hypothesis of them being Laplace 
eigenfunctions.

In the case that $\Gamma$ is generated by the element 
$\textmat{1}{1}{0}{1}\in\SL_2(\R)$, the 
system~\eqref{eq:auto_intro}--\eqref{eq:Delta_u_intro} reduces 
to~\eqref{eq:setup_untw_intro}. Also for any other choice of~$\Gamma$, 
the setting~\eqref{eq:setup_untw_intro} is vital to understanding the solutions 
to~\eqref{eq:auto_intro}--\eqref{eq:Delta_u_intro}: locally at each cusp 
of~$\Gamma\backslash\h$, after an appropriate change of coordinates (which we 
will suppress in what follows), the invariance~\eqref{eq:auto_intro} descends 
to $1$-periodicity in the $x$-direction.  Studying the local Fourier expansion 
of $u$ in this cusp is therefore the same as studying the Fourier expansion 
of~$f$ as in~\eqref{eq:setup_untw_intro}. In particular, in this cusp, the 
function~$u$ admits a decomposition as $f$ in~\eqref{eq:decomp_f_intro}. 
Therefore, if we know that $u$ has at most polynomial growth in the cusp, then 
the term corresponding to~$f_I$ in the Fourier expansion of~$u$ 
in~\eqref{eq:decomp_f_intro} vanishes and the expansion becomes
\[
u = u_0 + u_K
\]
with $u_0$ and $u_K$ as in~\eqref{eq:decomp_f_intro} (with $u$ in place 
of~$f$). 
This in turn provides us with the additional knowledge on the polynomial growth 
rate of~$u$. If $u$ is bounded and $s\not\in\{0,1\}$, then in addition the 
term~$u_0$ vanishes, and $u=u_K$ is exponentially decreasing as 
$y\nobreak\to\nobreak\infty$. Results of this type are crucial for several 
important applications, including obtaining dimension bounds for subspaces of 
solutions to~\eqref{eq:auto_intro} and~\eqref{eq:Delta_u_intro} as 
in~\cite{Lewis_Zagier}, as well as characterizing period functions for 
different types of automorphic functions as in~\cite{Lewis_Zagier, BP-A}.

\subsubsection{Motivation II, twisted case}
In the untwisted case, the endomorphism~$A$ is the identity on~$V$, and 
studying 
the Fourier expansions of $\Gamma$-automorphic Laplace eigenfunctions in cusps 
is equivalent to studying the Fourier expansions of solutions 
of~\eqref{eq:setup_untw_intro}. In the \emph{twisted case}, i.e., when the 
endomorphism~$A$ in \eqref{eq:twist_intro} is not the identity on~$V$, the 
following similar relation exists:
we consider functions $u\colon\h\to V$ which satisfy the \emph{twisted} 
invariance 
\begin{equation}\label{eq:auto_twist}
u(g.z) = \chi(g) u(z) \qquad\text{for all $g\in\Gamma$, $z\in\h$}\,,
\end{equation}
where $\chi\colon\Gamma\to\GL(V)$ is a representation of~$\Gamma$ on~$V$. 
Functions satisfying~\eqref{eq:auto_twist} are often called 
\emph{$(\Gamma,\chi)$-automorphic functions} or \emph{vector-valued automorphic 
functions with multiplier~$\chi$}. If we suppose for simplicity (to avoid 
any changes of coordinates) that 
\[
T\coloneqq\mat{1}{1}{0}{1}\in\Gamma,
\]
then \eqref{eq:auto_twist} applied for $g\coloneqq T$ becomes $u(T.z)=\chi(T)u(z)$, or 
equivalently,
\begin{equation}
u(z+1) = Au(z) \qquad\text{for all $z\in\h$}
\end{equation}
with $A\coloneqq \chi(T)$, which is precisely~\eqref{eq:twist_intro} for~$f=u$. 
An even stronger relation between~\eqref{eq:twist_intro} 
and~\eqref{eq:auto_twist} is valid if we suppose in addition that 
$\textmat{1}{x}{0}{1}\notin\Gamma$ for $x\in (0,1)$ or, in other words, if $T$ is a generator of the stabilizer group of~$\infty$ in~$\Gamma$.
Then $\Gamma\backslash\h$ has a cusp of the form 
\[
\langle T\rangle\backslash \{ x+\iu y \in\h \setmid y > y_0\}
\]
for some $y_0>0$. Restricted to this cusp, the only twist-invariance of~$u$ 
that is preserved from~\eqref{eq:auto_twist} is the one by~$T$ (and its 
iterates). 
Thus, locally in cusps, twisted automorphic functions are exactly as 
in~\eqref{eq:twist_intro}, and hence their local properties can be studied via 
investigating functions satisfying~\eqref{eq:twist_intro}.

Those twisted automorphic functions that satisfy in addition the eigenfunction 
property~\eqref{eq:Delta_u_intro}, thus, the functions $u\colon\h\to V$ that 
satisfy
\begin{equation}\label{eq:intro_autom_twist}
\Delta u = s(1-s)u \qquad\text{and}\qquad u(g.z) = \chi(g)u(z)\quad 
(g\in\Gamma,\ z\in\h)\,,
\end{equation}
are the \emph{$(\Gamma,\chi)$-automorphic Laplace eigenfunctions}. Prominent 
subclasses include various kinds of \emph{$V$-valued automorphic forms of 
weight~$0$ with multiplier~$\chi$}, which are distinguished by regularity 
conditions or growth properties at the ends of $\Gamma\backslash\h$.

Twisted automorphic Laplace eigenfunctions have received considerable attention 
for several decades already as they arise naturally in mathematics and physics. 
For example, they help us to understand the fine structure of the 
\emph{untwisted} automorphic Laplace eigenfunctions~\cite{Selberg_est_fourier}, 
and they play a crucial role in some areas of modern physics. 

More precisely, several physical systems can only be adequately addressed with 
vector-valued functions rather than scalar functions, due to the necessity to 
describe several characteristics simultaneously, like a field of arbitrary spin. 
 For example, vector-valued eigenfunctions of Laplace operators appear in the 
study of Casimir interaction between two conducting obstacles~\cite{MR3584189}.  
Vector-valued operators of Laplace-type help to investigate the behavior of a 
charged particle under the influence of a  magnetic field~\cite{MR3692925}.
They also play an  important role in attacking tunneling problems~\cite{MR4127499}.

In mathematical physics, both the thermal $\AdS_3$ space-times and the BTZ 
black hole are modeled by a geometric setting closely related to that 
considered in the present article.  Specifically, they are modeled by~$\Gamma 
\backslash \mathbb{H}^3$ with $\Gamma \subseteq \SL_2(\C)$ being a discrete 
infinitely cyclic subgroup~\cite[p.~4]{heatkernelonAdS3}.  For 
the BTZ black hole, the group $\Gamma$ is explicitly given by  
\[
\Gamma = \left\langle \left( \begin{smallmatrix}
e^{a + i b} & 0 \\ 0 & e^{-a-ib}
\end{smallmatrix} \right) \right\rangle 
\]
with~$a,b \in \R$ depending on the black hole's mass and angular momentum. See~\cite[(6-7)]{bytsenko2006spectral}.  

In the geometric setting of~$\mathbb{H}^3$, fields of arbitrary spin-$s$ 
correspond to sections of homogeneous vector bundles \cite[p.~2]{heatkernelonAdS3}. 
If the field should be defined not only on~$\mathbb{H}^3$, but also on~$\Gamma \backslash \mathbb{H}^3$, as used to model the aforementioned space times and black holes, then an additional $\Gamma$-invariance is required.
Consequently, a homogeneous vector bundle with such a $\Gamma$-invariance becomes a locally homogeneous vector bundle.  

To be more precise, let $G$ be a semisimple Lie group and $K$ its maximal 
compact subgroup.
For BTZ black holes, $G=\SL_2(\C)$ and $K=\SU(2)$.
Let $\rho\colon G \mapsto \GL(V)$ be a finite-dimensional representation of~$G$, and let $\rho_K$ and $\rho_\Gamma$ be the restrictions of~$\rho$ to~$K$ and $\Gamma$, respectively.
We let $E_{\rho_\Gamma}$ be a flat bundle associated to~$\rho_{\Gamma}$ and we let $E_{\rho_K}$ be the locally homogeneous vector bundle associated to~$\rho_K$.
In order to define the latter, we first define a homogeneous vector bundle 
\[
\widetilde{E}_{\rho_K} \coloneqq (G \times V) / K \to G/K\,,
\]
where $K$ acts on the right as 
\[
(g,v)k \coloneqq (gk, \rho_K(k^{-1}) v)\,, \qquad \text{for $g \in G$, $k \in K$, $v \in V$}
\]
and let $E_{\rho_K} \coloneqq \Gamma \backslash \widetilde{E}_{\rho_K}$. In~\cite[Proposition 3.1]{MatsushimaMurakami} it is stated that 
\[
E_{\rho_K} \cong E_{\rho_\Gamma}\,.
\]  
If we consider a spin-$s$ representation of~$\SU(2)$ for an integer~$s$, then it has a lift to a representation of~$\SL_2(\mathbb{C})$.
In this situation, we can construct a flat vector bundle associated to the restriction of such a representation to~$\Gamma$, which will be in one-to-one correspondence with the original locally homogeneous vector bundle.
Thus, fields of spin-$s$ for integer~$s$ are in this way isomorphic to sections of a flat vector bundle, analogous to the vector-valued functions in our present study.  
Moreover, non-unitary twists are essential:
at least for~$\SL_2(\R)$, restrictions of representations of~$\SL_2(\R)$ to a subgroup of~$\SL_2(\R)$, are in general \emph{not} unitary. 
One may compare the explicit expression for such representations in the proof 
of~\cite[Proposition~5.1]{FP_szf}. In the $3$-dimensional case, these correspond 
to~$\rho_\Gamma$ defined above which are in general non-unitary as well.   
Thus, spin-$s$ fields are isomorphic to sections of certain flat vector bundles 
corresponding to non-unitary representations. If these sections are Laplace 
eigenfunctions, then they are expected to correspond to zeros of the Selberg zeta 
function.  In fact, for finite volume surfaces and unitary representations this 
is a direct consequence of the classical Selberg trace formula.  These zeros 
encode the normal frequencies of spin-$s$ fields on thermal $\AdS_{3}$ 
space-times. Consequently, detailed information on vector-valued Laplace 
eigenfunctions twisted by not-necessarily unitary representations should lead to 
a deeper understanding of Vasiliev theories~\cite{Vasiliev}. Of course these applications also show that it would be of great interest to study Fourier expansions of twisted automorphic Laplace eigenfunctions for much more general spaces. However, such a generalization of our results is beyond the scope of this article. 

In the applications we just discussed, the twist is provided by a genuine representation and hence the twisting endomorphism is \emph{invertible}. 
However, with our investigations in this article, we can easily cover \emph{non-invertible} endomorphisms as well. In view of understanding the twist as a perturbation, we are certain that also non-invertible twists will have 
far-reaching important applications and the transition from non-invertible twists to invertible ones will lead to interesting insights. 

Due to the great significance of twisted automorphic Laplace eigenfunctions, much effort has already been spent on their study and several important 
results have been obtained, for various types of such functions and in various 
generalities in regard to the twist. A non-exhaustive list regarding 
\emph{non-unitary twists} includes~\cite{Eholzer_Skoruppa, Daughton2016, 
Raji_cohom, Muehlenbruch_Raji, Deitmar_Monheim_traceformula, 
Deitmar_Monheim_eisenstein, Deitmar_locally_compact, FP_szf, FP_eisenstein, 
Saber_Sebbar} and \cite{Knopp_Mason_definition, Knopp20032, 
Knopp_Mason_illinois, Knopp2011, Knopp_Mason2012, Kohnen_Mason, Kohnen_Martin, 
Kohnen, Kohnen_Mason2, Bantay_Gannon, Creutzig_Gannon, Gannon_theory, DGMS}. 
Analogous questions can be asked for higher-dimensional spaces as well; the 
first results in this direction for non-unitary twists were recently established 
in~\cite{Mueller_STF, Spilioti2015, Fedosova_nonunitary, 
Deitmar_Monheim_traceformula, Deitmar_locally_compact, Spilioti2020}.

Further examples of functions closely related to this area of research are
vector-valued modular forms. Typically they are not Laplace 
eigenfunctions, but they obey the same type of controlled non-periodicities as 
supposed in the present article. Vector-valued modular forms play a crucial role 
in the generalized moonshine conjecture \cite{Dong_Li_Mason, 
gannon2006monstrous, MR3859972} being ultimately related to generating series 
for characters of rational vertex operator algebras 
\cite{Zhu_characters_of_vertex_operator_algebras, franc2021character}.  
Vector-valued modular forms are also important objects in 2-dimensional 
conformal field theory~\cite{MR3871267, francesco2012conformal}.  Deformations 
of vector-valued modular forms arise in the study of gravitational waves 
\cite{massive_deformations_of_maass_forms}. Additionally, vector-valued modular 
forms are used in the Standard Model of particle physics to construct example 
models for lepton masses \cite{liu2021modular}. Vector-valued mock modular forms 
appear in string theory while studying {D}3-instantons \cite{MR3638318}.
Interestingly, some components of vector-valued automorphic functions satisfy  
differential equations similar to those used to model the behavior of the 4-loop 
supergraviton \cite{Differential_equations_in_automorphic_forms, 
Eisenstein_series_for_higher-rank_groups_and_string_theory_amplitudes}. 
Considering the importance of vector-valued modular forms, it seems reasonable 
to expect that also a kind of non-holomorphic vector-valued 
``modular forms'' that are Laplace eigenfunctions will have similar physical 
applications, for which the results of our present article are applicable.

\subsubsection{Previous results on Fourier expansions in the presence of twists, and their relation to Theorem~\ref{thm:intro_general}}
Results regarding Fourier expansions of twisted automorphic Laplace 
eigenfunctions are---despite their importance---rather sparse. To date Fourier 
expansions have been established mostly for twists by unitary 
representations; for example, in~\cite{Phillips_perturb_twist, PhillipsSarnak94}, they appear in the study of the behavior of counting functions of cusp forms. In particular, the authors employ the space of $L^2$-functions with zero Fourier coefficients in the cusp at~$\infty$ vanishing close enough to the mentioned cusp. Other references include~\cite{Hejhal2} and~\cite{Venkov_book}.

In order to show that our results are compatible with the mentioned results, we compare the structure of the Eisenstein series in \cite{Venkov_book} with the Fourier expansion we have obtained. We denote by $E_\chi(z,s) \coloneqq E_{\Gamma, \chi}(z,s)$ the Eisenstein series twisted by a unitary representation~$\chi$; without loss of generality we let $\dim_\C \chi = 1$. If we assume that the eigenvalue of $\chi$ is equal to one as in \cite[Theorem 3.1.2]{Venkov_book}, then the problem reduces to the classical case that has been considered above. In the case when the eigenvalue,~$e^{2 \pi i \theta}$, is not equal to one, \cite[Theorem 3.1.3]{Venkov_book} reads 
\[
E_\chi(z,s) = \sum_{j\in \mathbb{Z}} a_{n, \theta, s} e^{2 \pi i (j + \theta) x} K_{s-1/2}(2 \pi |j + \theta| y)
\]
for some $a_{n, \theta, s}$ depending only on $n$, $s$, $\Gamma$ and $\theta$. The result in \cite{Deitmar_Monheim_eisenstein} is structurally similar; however, the coefficients, $a_{n, \theta, s}$, are different.
 
We note that the series above are, strictly speaking, not Fourier series, because $E_\chi(z,s)$ is not periodic in the $x$-direction. However, $e^{-2 \pi i  \theta x} E_\chi(z,s) $ is periodic. We will see later in Section \ref{sec:periodization} that a similar procedure of \textit{periodization}, albeit more technically involved, plays a similar role for the Fourier expansions for non-unitary representations.

For non-unitary twists, Fourier expansions are known only for Eisenstein series, 
however with increasing complexity of twists, namely twists that are unitary in 
cusps~\cite{Deitmar_Monheim_eisenstein} and twists with non-expanding cusp 
monodromy~\cite{FP_eisenstein}. For the sake of completeness we remark that 
related results exist for (weighted) modular forms and cusp forms that are 
twisted with one-dimensional representations that are unitary in cusps, however 
asking for Fourier expansions in~$z$ not in~$x$, which are structurally easier. 
See, e.g., \cite{Selberg_est_fourier, Rademacher_Zuckerman, Raji_fourier}.

\subsection{Survey of proofs and organization of this article}\label{sec:survey}

This article is rather technical and notation-heavy due to the very nature of the 
main results. For the convenience of the reader we provide now an informal 
survey of the proofs. Simultaneously, we present the organization of this 
article.

We collect the necessary background material and introduce the major part of 
the 
notation in Section~\ref{sec:prelimsnotation}. We start the investigations by 
reducing their complexity in the manner to which we have alluded above. Namely, 
we pick a basis of the vector space~$V$ with respect to which the 
endomorphism~$A$ is represented by a matrix in Jordan-like normal form. 
See~\eqref{eq:Jordan_intro}--\eqref{eq:intro_lambda}. The associated 
decompositions of~$V$ and~$f$ as in~\eqref{eq:intro_splitV} 
and~\eqref{eq:intro_splitf} are preserved under the action of~$A$ due to its 
block structure. Thus they are also compatible with Fourier expansions, by 
linearity. We may therefore \emph{reduce the investigations to the case that 
$A$ is represented by a single Jordan-like block}, and we can easily deduce 
Theorem~\ref{thm:intro_general} from Proposition~\ref{prop:intro_vanish} and 
Theorem~\ref{thm:intro_single}. This is discussed in further detail in 
Section~\ref{sec:reduction}.

We then suppose that $A$ is represented by a single Jordan-like block and 
\emph{split the investigations into the two cases whether $A$ has 
eigenvalue~$0$ or not}. In the first case, which is rather straightforward, $A$ 
is represented by a Jordan-like block of the form~\eqref{eq:intro_zero} and hence 
nilpotent. An iteration of $f(z+1)=Af(z)$ yields 
\[
f(z+d) = A^d f(z) = 0
\]
for $d=\dim V$. From this, Proposition~\ref{prop:intro_vanish} follows 
immediately. We dispatch this case in Section~\ref{sec:reduction}.

The other case, in which the eigenvalue of~$A$ is not~$0$, constitutes the main 
bulk of this article. To develop a Fourier expansion in this twisted case, we 
proceed as analogously to the classical approach for the untwisted case as 
possible. However, the presence of the twist not only causes some major 
differences to which we need to adapt but also makes these investigations much 
more involved than in the classical case. 

The first main difference from the classical, untwisted case is that the 
map~$f$ is not periodic in the $x$-direction. For periodic maps we can separate 
the $x$- and the \mbox{$y$-}component of the argument~$z=x+\iu y$ of~$f$ by 
representing~$f$ with respect to the Fourier basis~$\{e^{2\pi \iu 
n x}\}_{n\in\Z}$. The representation coefficients in this basis depend then 
on~$y$ only. In our situation these coefficients are necessarily dependent on 
both~$x$ and~$y$. To overcome this issue we use the following 
\emph{periodization of~$f$}, an approach which is also known from the study of monodromy of ordinary differential equations, vector bundles and modular forms. We represent the endomorphism~$A$ by the 
Jordan-like block~$J$ with eigenvalue~$\lambda\not=0$ of the 
form~\eqref{eq:intro_lambda}. 
The map 
\[
F\colon \h \to V\,,\quad z=x+\iu y\ \mapsto\ J^{-x}f(z)\,,
\]
is $1$-periodic in the $x$-direction. By abstract Fourier theory we therefore 
know that the map~$F$ has a Fourier expansion of the form 
\[
F(z) = \sum_{n\in\Z} \hat f_n(y) e^{2\pi \iu n x}\,,
\]
where the coefficients $\hat f_n(y)\in V$ depend on~$y$ only. Therefore, the 
original map~$f$ admits the expansion
\[
f(z) = \sum_{n\in\Z} J^x\hat f_n(y) e^{2\pi \iu n x}\,.
\]
We observe that although the coefficients in this representation depend on 
both~$x$ and~$y$, the contributions of~$x$ and~$y$ are clearly separated. The 
details of this step are contained in Section~\ref{sec:periodization}.

We then turn to investigating the coefficient functions $\hat f_n\colon 
\R_{>0}\to V$. Exploring the property that $f$ is a Laplace eigenfunction, we 
find that the coefficient functions satisfy the \emph{($n$-dependent) 
differential equation}
\begin{equation}\label{eq:ODE_twist_intro}
\left( y^2\partial^2_y + s(1-s) + y^2\partial^2_x\vert_{x=0}\left(e^{2\pi \iu 
n x}J^x\right) \right)\hat f_n(y) = 0\,,\qquad y\in\R_{>0}\,,
\end{equation}
which is reminiscent of the modified Bessel differential 
equation~\eqref{eq:Bessel_intro}. At this point we encounter the second main 
difference to the untwisted case. If the size of the Jordan-like block~$J$ is 
larger than~$1$, then the differential equation~\eqref{eq:ODE_twist_intro} is 
vector-valued or, in other words, a system of differential equations. Since $J$ 
is not diagonalizable, this system does not decompose into several independent 
differential equations; instead it is heavily interrelated. The deduction 
of~\eqref{eq:ODE_twist_intro} is presented in Section~\ref{s:system}.

After a base change the system~\eqref{eq:ODE_twist_intro} becomes a 
\emph{cascade of differential equations} that we can solve iteratively. As in 
the classical case, we may encounter resonance situations for the zeroth 
coefficient function $\hat f_0$, depending on the value of the spectral 
parameter~$s$ and the eigenvalue~$\lambda$ of~$A$. In Section~\ref{s:system} we 
present this base change, and in 
Sections~\ref{sec:proof_zero}--\ref{sec:proof_nonzero} we provide a detailed 
discussion of the \emph{solution} of~\eqref{eq:ODE_twist_intro}.

The asymptotic behavior of the functions~$\fu$ and $\futoo$ as stated in 
Theorem~\ref{thm:intro_growth} is essentially a consequence of the growth 
properties of the modified Bessel functions. Section~\ref{sec:growthIK} is 
devoted to the proof of the extended version of Theorem~\ref{thm:intro_growth}.

\subsection*{Acknowledgements} 
AP's research is funded by the Deutsche Forschungsgemeinschaft (DFG, German 
Research Foundation) -- project no.~264148330 and no.~441868048 (Priority 
Program~2026 ``Geometry at Infinity''). In addition, she wishes to thank the 
Hausdorff Institute for Mathematics in Bonn for excellent working conditions 
during the HIM trimester program ``Dynamics: Topology and Numbers,'' where part 
of this manuscript was prepared. JR is supported by Swedish Research Council 
Grant 2018-03873. The National Science Foundation award DMS-1440140 supported 
JR at the Mathematical Sciences Research Institute in Berkeley, California 
during the fall semester of 2019. KF and AP wish to thank the Institut 
Mittag-Leffler in Djursholm, Sweden, supported by the Swedish Research Council 
under grant no. 2016-06596, for hospitality during the program ``Thermodynamic 
Formalism---Applications to Geometry, Number Theory, and Stochastics,'' where 
some part of the research for this manuscript was done. The authors wish to thank the referees for valuable comments that helped to improve the presentation of this article.

\section{Essential objects and the refined statement of 
Theorem~\ref{thm:intro_single}}\label{sec:prelimsnotation}

In this section we introduce the majority of the objects used throughout this 
article. In particular, we provide the precise definitions of all constituents 
of the Fourier coefficient functions. In addition, with all definitions in 
place, we will present the refined statement of Theorem~\ref{thm:intro_single}, 
namely as Theorem~\ref{Fourier:main_theorem} below.

\subsection{General notation}\label{s:notations} 
We denote by~$\N$, $\N_0$, $\Z$, $\R$ and~$\C$ the set of the positive natural 
numbers, the non-negative integers, all integers, the real numbers and the 
complex numbers, respectively. We use 
\[
\R_{\geq x_0} \coloneqq \{ x\in\R \mid x\geq x_0\}
\]
with $x_0\in\R$ to denote the closed right half line, starting at~$x_0$. We  
assign analogous meanings to~$\R_{>x_0}$, $\R_{\leq x_0}$ and $\R_{<x_0}$. We 
write $\iu=\sqrt{-1}$ for the imaginary unit in~$\C$, in contrast to~$i$, which 
we will  use as an index. For a complex number, $z\in\C$, we denote its real 
and imaginary part by~$\Rea z$ and~$\Ima z$, respectively. We will often 
write~$x$ for~$\Rea z$ and~$y$ for~$\Rea z$, with implicit understanding that 
$x$ and~$y$ depend on~$z$. Further, for any complex number~$z\in \C$ we 
write~$\bar{z}$ for its complex conjugate. We denote the set~$\C\setminus\{0\}$ 
of invertible complex numbers by~$\C^\times$. For any $n\in\N$, we use 
$\C^{n\times n}$ and $\Mat(n\times n;\C)$ interchangeably for the set of 
$(n\times n)$-matrices with complex entries. Further, for real numbers~$r\in\R$ 
we let $\sgn(r)$ denote the sign of~$r$. We will use the floor function, 
$\lfloor \cdot \rfloor\colon\R \to \Z$, 
\[
 \left\lfloor x \right\rfloor \coloneqq \max\{ m\in\Z \setmid m\leq x\}\,.
\]
Further, from now on, $\Gamma$ always denotes the Gamma functions (contrary to the previous sections, where $\Gamma$ was used for a discrete subgroup of~$\SL_2(\C)$ or~$\SL_2(\R)$ or~$\PSL_2(\R)$, which will not make any further appearances).

\subsection{Setting}\label{sec:setting}
We use the upper half plane model for the hyperbolic plane, with the standard 
hyperbolic Riemannian metric:
\[
\h = \{ z=x+\iu y \in \C \setmid \Ima z = y > 0\}\,, \quad ds^2_z = \frac{dx^2 
+ 
dy^2}{y^2}\,.
\] 
In this model, the Laplace operator is
\[
 \Delta = -y^2 \left( \pa_x ^2 + \pa_y ^2 \right)\,.
\]
We fix a finite-dimensional complex vector space, $V$, and consider $\Delta$ as 
an operator on the smooth functions on~$\h$ with values in~$V$. We further fix 
an endomorphism~$A$ of~$V$. This endomorphism may well be non-unitary or 
non-invertible. We will develop Fourier expansions for the smooth 
functions~$f\colon\h\to V$ that are twist-periodic with respect to~$A$,
\begin{equation}\label{aut_f_c_1}
 f(z+1) = Af(z) \qquad\text{for all $z\in\h$}\,,
\end{equation}
and eigenfunctions of the Laplacian  
\begin{equation}\label{aut_f_c_2}
 \Delta f = s(1-s)f
\end{equation}
for some~$s\in\C$. We remark that each eigenvalue of~$\Delta$ can be 
represented 
in the form $s(1-s)$ with some~$s\in\C$. Unless $s(1-s)=1/4$, in which case 
$s=1/2$, there are always two choices of values for~$s$. We will see that it is 
more convenient to associate to the function~$f$ a \emph{spectral 
parameter}~$s$ instead of its eigenvalue~$s(1-s)$.

We further remark that this setting is more general than it may seem at first 
glance. To elaborate on this, we recall that any horocycle on~$\h$ is 
isomorphic to a maximal \emph{unipotent} subgroup of orientation-preserving 
Riemannian isometries of~$\h$. Vice versa, any such subgroup generates a family 
of concentric horocycles \cite{Eberlein, ImHof}. For this reason we call such a 
subgroup a \emph{horocycle direction}. Each of these subgroups is conjugate 
within the orientation-preserving Riemannian isometries to the one-parameter 
group 
\[
 \{ \h\to\h,\ z\mapsto z+r\}_{r\in\R}\,,
\]
and the associated family of horocycles in~$\h$ consists of the ``horizontal'' 
lines 
\[
 \{\iu y+\R\}_{y>0}\,.
\]
If a map is (twist-)periodic on such a horocycle, then we may always assume 
that 
the (twist-)period is~$1$ after suitable rescaling (which is indeed another 
conjugation within the orientation-preserving Riemannian isometries). 
Therefore, any map that is twist-periodic in any horocycle direction is 
conjugate to a map satisfying~\eqref{aut_f_c_1}.

In the remainder of this section we will introduce several objects most of 
which 
have to be defined separately for each Jordan block of~$A$ and which depend 
intimately on the size of a Jordan block and its eigenvalue. 
\begin{center}
\framebox{
\begin{minipage}{.55\textwidth}
For that reason we suppose in what follows that $A$ acts irreducibly on~$V$ and 
is invertible. 
\end{minipage}
}
\end{center}
In other words, we suppose that $A$ has a single Jordan block, and that its 
eigenvalue,~$\lambda$, is not equal to~$0$. Further we let $d\coloneqq \dim V$ 
denote the dimension of~$V$ (as a complex vector space). All definitions that 
we will present in what follows easily generalize to the generic setting by 
applying them separately to each Jordan block. We will use this approach in 
Section~\ref{sec:reduction}.

Several of the objects that we will define further below will depend on the 
value of the eigenvalue~$\lambda$ of~$A$. However, to simplify notation, we 
will usually suppress this dependence. 

\subsection{Complex logarithm and branch cuts}\label{sec:cut}

We denote the real logarithm by~$\log$, thus
\[
 \log\colon\R_{>0} \to \R\,.
\]
For any complex number~$z\in\C^\times$  we choose the standard 
branch for its argument~$\arg z$, thus $\arg z  \in (-\pi, \pi]$. Further, we 
denote the standard choice of the complex logarithm, the \emph{principal 
logarithm}, by~$\Log$. Thus, 
\begin{equation}\label{eq:Log}
 \Log z = \log|z| + \iu \arg z
\end{equation}
for all~$z\in\C^\times$. As is well-known, the domain of holomorphy 
of~$\Log$ is~$\C\setminus\R_{\leq 0}$.  

For several constructions below we will require a complex logarithm that is 
holomorphic at several points related to the eigenvalue~$\lambda$ of~$A$. 
Because we allow here any (nonzero) complex number as the value of~$\lambda$, we 
will also choose, if necessary or convenient, a non-standard complex logarithm. 
However, we will restrict the admissible choices to those complex logarithms 
that extend the real logarithm~$\log$. 

We pick~$\omega\in\C$, $\omega\notin [0,\infty)$, such that for each $n\in\Z$, 
at least one of 
\begin{equation}\label{eq:pm_omega}
 -\iu\Log\lambda + 2\pi n \quad\text{and}\quad \iu\Log\lambda - 2\pi n
\end{equation}
is \emph{not} in~$\omega\R_{>0}$. We note that $\pm(\iu\Log\lambda -2\pi n) = 
0$ if and only if $\lambda=1$ and $n=0$. In all other cases, at least one of 
the values in~\eqref{eq:pm_omega} is \emph{not} in~$\omega\R_{\geq0}$. Clearly, 
the possible values for~$\omega$ depend on~$\lambda$ (which we do not reflect in 
the notation in favor of simplification), but many choices are possible. The 
definitions and constructions that we will present in what follows intimately 
depend on the choice of~$\omega$. However, a different choice will result only 
in phase changes that do not qualitatively affect the results of 
Theorems~\ref{thm:intro_single}, \ref{thm:intro_general} 
and~\ref{thm:intro_growth} and their refinements and extensions further below.  

We use the set~$\omega\R_{\geq0}$ as a branch cut for a complex logarithm. In 
other words, we will use throughout the realization of the complex logarithm 
that is holomorphic on~$\C\setminus\omega\R_{\geq0}$ and coincides with the 
real logarithm on~$\R_{>0}$. We denote this logarithm 
by~$\log(\;\cdot\;;\omega)$. 
To be more specific, we first note that choosing $\omega=-1$ leads to the 
standard choice of the complex logarithm, thus 
\[
\log(\;\cdot\;;-1) = \Log \colon \C^\times\to\C\,\,,
\]
with $\Log$ being defined in~\eqref{eq:Log}. For generic 
$\omega\in\C\setminus[0,\infty)$ we set 
\begin{equation}\label{Omega_definition}
\Omega_\omega \coloneqq 
\begin{cases}
(-\pi , \arg\omega) \quad &\text{for $\arg\omega > 0$}\,, 
\\
(\arg\omega, \pi] \quad &\text{for $\arg\omega < 0$}\,, 
\end{cases}
\end{equation}
and define the logarithm map
\[
 \log(\;\cdot\;;\omega)\colon\C^\times\to \C
\]
by
\begin{equation}
\log(z; \omega) \coloneqq  
\begin{cases}
\log(z; -1) & \text{for $\arg z \in \Omega_\omega$}\,,
\\
\log(z; -1) - 2 \pi \iu \sgn( \arg\omega) \quad & \text{otherwise}
\end{cases}
\end{equation}
for any~$z\in\C^\times$. The set~$\Omega_\omega$ is chosen such that 
$\log(\;\cdot\;;\omega)$ coincides with $\log$ on~$\R_{>0}$. The logarithm 
map~$\log(\;\cdot\;;\omega)$ is holomorphic on~$\C\setminus\omega\R_{\geq0}$.
The relation between~$\log(\;\cdot\;;\omega)$ and $\Log = \log(\;\cdot\;;-1)$
is shown in Figure~\ref{fig:log_cut_pm}.

\begin{figure}
\centering
\includegraphics[width=300pt]{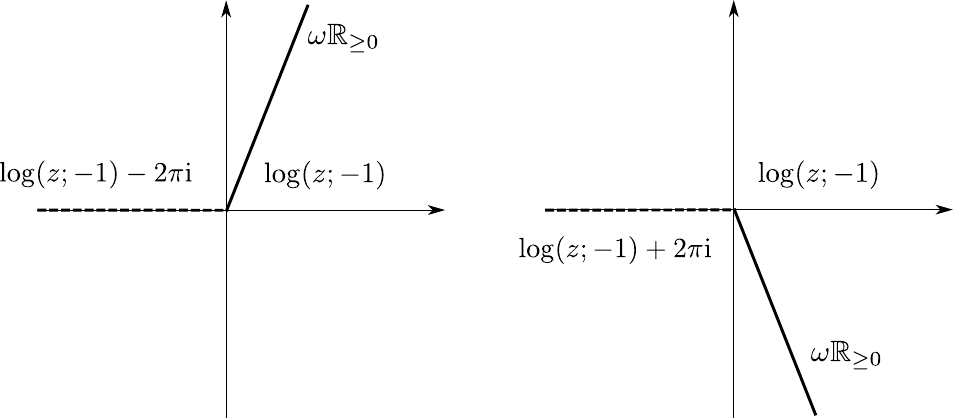}
\caption{The left figure indicates the definition of~$\log(\;\cdot\;; \omega)$ 
for $\arg\omega > 0$ in relation to~$\log(\;\cdot\;;-1)$. The right figure 
shows the relation between~$\log(\;\cdot\;; \omega)$ and~$\log(\;\cdot\;;-1)$ 
if~$\arg\omega < 0$.}
\label{fig:log_cut_pm}
\end{figure}

\subsection{Jordan-like blocks}\label{sec:jordanblocks}

For any $\mu \in \C$, $\mu\not=0$, we let $J(\mu)$ be the $(d\times d)$-matrix 
all of whose entries on the diagonal and superdiagonal equal~$\mu$ and all of 
whose other entries are~$0$:
\begin{equation}\label{J_def}
J(\mu) = 
\begin{pmatrix}
\mu & \mu
\\
& \mu & \mu
\\
& & \ddots & \ddots 
\\
& & &  \mu & \mu
\\
& & & & \mu
\end{pmatrix}\,.
\end{equation}
We call $J(\mu)$ the \emph{Jordan-like matrix} with \emph{eigenvalue}~$\mu$. 
Using the \emph{Kronecker delta function}
\begin{equation}\label{eq:kronecker}
 \delta_0\colon \C\to \{0,1\}\,,\quad \delta_0(z)\coloneqq \begin{cases} 1 & 
\text{if $z=0$} \\ 0 & \text{otherwise} \end{cases}\,,
\end{equation}
we may describe this Jordan-like matrix by
\[
 J(\mu) =  \bigl( j_{mn} \bigr)_{m,n=1}^d = \bigl(\mu \, \delta_0(m-n) + \mu \, 
\delta_0(m+1-n)\bigr)_{m,n=1}^d\,.
\]
As is well-known, the map 
\begin{equation}\label{eq:homJintegers}
 \Z\to\Mat(d\times d;\C)\,,\quad k\mapsto J(\mu)^k\,,
\end{equation}
is a group homomorphism. To understand the effect of the twisting 
endomorphism~$A$ on the components of the Fourier coefficients in the real 
direction (i.e., the~\mbox{$x$-}di\-rec\-tion) we will need an extension 
of~\eqref{eq:homJintegers} to a continuous homomorphism from~$\R$ 
to~$\Mat(d\times d;\C)$, applied to~$\mu$ being the eigenvalue~$\lambda$ 
of~$A$. See Theorems~\ref{thm:intro_single} and~\ref{Fourier:main_theorem}. The existence of such a continuous extension of~\eqref{eq:homJintegers} and its degree of (non-)uniqueness is well-known.
However, for the convenience of the reader we now provide details. This also allows us---as a by-product---to explicitly track down the influence of the choice of the logarithm in this extension, for which reason we are here sticking to a Jordan-like matrix representation of the considered endomorphism. 

The extension we will use throughout is provided in the following lemma, for which we recall that $\lambda$ denotes the eigenvalue of~$A$, and that $\log(\;\cdot\;;\omega)$ is the complex logarithm chosen in Section~\ref{sec:cut}.
For any $x\in\R$, we set 
\begin{equation}\label{eq:def_lambdax}
 \lambda^x \coloneqq e^{x\log(\lambda;\omega)}\,.
\end{equation}
We use the standard notation~$\binom{x}{p}$ for the generalized binomial coefficients of~$x\in\R$ and~$p\in\N_0$.

\begin{lemma}\label{J_power_lemma} Let $J\coloneqq J(\lambda)$. Then the map 
\begin{equation}\label{eq:HomJreals}
 \R\to\Mat(d\times d;\C)\,,\quad x\mapsto J^x\,,
\end{equation}
with 
\begin{equation}\label{Jx1}
J^x \coloneqq \lambda^x  
\begin{pmatrix}
\binom{x}{0} & \binom{x}{1} & \binom{x}{2} & \ldots
& \binom{x}{d-1}
\\[1mm]
& \binom{x}{0} & \binom{x}{1} & \ldots & \binom{x}{d-2}
\\
& & \ddots & \ddots &   \vdots
\\[-2mm]
& & & \ddots & \binom{x}{1}
\\[1mm]
& & & & \binom{x}{0}
\end{pmatrix}\,,
\end{equation}
is a continuous homomorphism that extends the homomorphism 
in~\eqref{eq:homJintegers} for $\mu=\lambda$.
\end{lemma}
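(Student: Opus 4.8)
The plan is to verify directly that the matrix-valued map $x\mapsto J^x$ defined by~\eqref{Jx1} is continuous, is a homomorphism, and agrees with $k\mapsto J(\lambda)^k$ on $\Z$. Continuity is immediate: each entry is a product of $\lambda^x = e^{x\log(\lambda;\omega)}$ (continuous in $x$ by~\eqref{eq:def_lambdax}) with a generalized binomial coefficient $\binom{x}{p}$, which by~\eqref{defbixp} is a polynomial in $x$ of degree $p$, hence continuous. So the substance is the homomorphism property and the compatibility with integer powers.

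First I would record the structural observation that makes everything transparent: write $J(\lambda) = \lambda(I + N)$, where $N$ is the nilpotent matrix with $1$'s exactly on the superdiagonal, so that $N^p$ has $1$'s on the $p$-th superdiagonal and $N^d = 0$. Then the proposed formula~\eqref{Jx1} is nothing but
\[
 J^x = \lambda^x \sum_{p=0}^{d-1} \binom{x}{p} N^p,
\]
i.e.\ the ``binomial series'' $\lambda^x (I+N)^x$ truncated at degree $d-1$ (it terminates anyway since $N^d=0$). With this reformulation, the integer compatibility is the classical finite binomial theorem: for $k\in\N_0$, $\binom{k}{p}$ is the usual binomial coefficient, $\sum_{p} \binom{k}{p} N^p = (I+N)^k$, and $\lambda^k(I+N)^k = J(\lambda)^k$; for negative integers one checks the same identity, e.g.\ by noting $J^{-k}$ as defined must be the inverse of $J^k$ (see below) or by the Vandermonde-type identity $\binom{-k}{p} = (-1)^p\binom{k+p-1}{p}$, which is exactly the expansion of $(I+N)^{-k}$.

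The homomorphism property $J^{x+y} = J^x J^y$ for all $x,y\in\R$ is the heart of the matter. Since $\lambda^{x+y} = \lambda^x\lambda^y$ (because $\log(\,\cdot\,;\omega)$ extends the real logarithm and $\lambda^x$ is defined as a genuine exponential in~\eqref{eq:def_lambdax}), it reduces to showing
\[
 \sum_{p=0}^{d-1}\binom{x+y}{p} N^p = \Bigl(\sum_{q=0}^{d-1}\binom{x}{q} N^q\Bigr)\Bigl(\sum_{r=0}^{d-1}\binom{y}{r} N^r\Bigr)
\]
in $\Mat(d\times d;\C)$. Comparing the coefficient of $N^p$ on both sides (using $N^d=0$ to drop higher terms), this is exactly the Vandermonde--Chu identity
\[
 \binom{x+y}{p} = \sum_{q+r=p}\binom{x}{q}\binom{y}{r},
\]
valid for all real $x,y$ and $p\in\N_0$. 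I would prove this identity by the standard polynomial-identity argument: both sides are polynomials in $x$ (for fixed $y$, or treat $x,y$ as indeterminates) of degree $\le p$, and they agree on all non-negative integers $x$ by the usual combinatorial Vandermonde identity, hence they agree identically; then do the same in the $y$ variable. Given the identity, matching coefficients of $N^p$ for $p=0,\dots,d-1$ finishes the homomorphism claim. I do not anticipate a genuine obstacle here — the only mild subtlety is keeping the truncation at $N^{d-1}$ honest, which is automatic because $N^d = 0$ kills every term of degree $\ge d$ on both sides, so the truncated Cauchy product equals the truncation of the full product. As a final remark one can note $J^0 = I$ and hence $J^x J^{-x} = I$, so each $J^x$ is invertible with $(J^x)^{-1} = J^{-x}$, which also re-confirms the negative-integer case against~\eqref{eq:homJintegers}.
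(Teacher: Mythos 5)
Your proposal is correct and, at its core, it is the same argument the paper gives: the homomorphism property is reduced to the Chu--Vandermonde identity $\binom{x+y}{p}=\sum_{q+r=p}\binom{x}{q}\binom{y}{r}$, and integer compatibility is checked via the (usual or negative) binomial theorem. You phrase the computation as matching coefficients of the nilpotent powers $N^p$ in $J^x=\lambda^x\sum_p\binom{x}{p}N^p$, whereas the paper performs the identical calculation entry-by-entry on $B_{ij}$ with $p=j-i$; these are the same thing, and your $\lambda(I+N)$ decomposition is in fact the one the paper itself records in the remark preceding its proof.
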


\begin{remark}
We emphasize two facts about Lemma~\ref{J_power_lemma}.
\begin{enumerate}[label=$\mathrm{(\roman*)}$, ref=$\mathrm{\roman*}$]
\item Formula~\eqref{Jx1} shows that if the considered endomorphism is represented as a Jordan-like matrix, then the influence of the choice of the logarithm is separated from the inner structure of the matrix.
\item The map in Lemma~\ref{J_power_lemma} and its properties can be derived from the matrix logarithm and the matrix exponential map as follows. Even though this fact is well-known from, e.g., Lie theory, we provide here some details for the convenience of the reader.
The matrix~$J$ decomposes as
\[
 J = \lambda( I + N )\,,
\]
where $I$ is the $(d\times d)$-identity matrix and $N$ is the nilpotent matrix 
with ones ($1$'s) on the superdiagonal and zeros ($0$'s) elsewhere. Then 
\[
 \log J = \log(\lambda;\omega)\,I + \log(I+N)\,,
\]
where
\begin{align*}
 \log(I+N) & = \sum_{k=1}^\infty \frac{(-1)^{k+1}}{k} N^k = 
\sum_{k=1}^{d-1}\frac{(-1)^{k+1}}{k}N^k
 \\
 & = \begin{pmatrix}
      0 & 1 & -\frac12 & \ldots & \ldots  & \frac{(-1)^d}{d-1}
      \\
       & 0 & 1 & -\frac12 & \ldots & \frac{(-1)^{d-1}}{d-2}
       \\[1mm]
       & & \ddots & \ddots & \ddots & \vdots
       \\[1mm]
       & & & 0 & 1 & -\frac12 
       \\
       & & & & 0 & 1 
       \\
       & & & & & 0
     \end{pmatrix}\,.
\end{align*}
Here, we suppressed in the notation that the matrix logarithm~$\log J$ depends 
on the choice of~$\omega$. The matrix logarithm~$\log(I+N)$, however, is 
independent of~$\omega$. Further we have
\begin{align*}
 J^x & = \exp(x\log J) = \exp\bigl(x\log(\lambda;\omega)\,I\bigr) 
\exp\bigl(x\log(I+N)\bigr)
 \\
 & = \exp\bigl(x\log(\lambda;\omega)\bigr) \cdot 
\prod_{k=1}^{d-1}\exp\left(\frac{(-1)^{k+1}x}{k}N^k \right)
 \\
 & = \exp\bigl(x\log(\lambda;\omega)\bigr)
 \begin{pmatrix}
\binom{x}{0} & \binom{x}{1} & \binom{x}{2} & \ldots
& \binom{x}{d-1}
\\[1mm]
& \binom{x}{0} & \binom{x}{1} & \ldots & \binom{x}{d-2}
\\
& & \ddots & \ddots &   \vdots
\\[-2mm]
& & & \ddots & \binom{x}{1}
\\[1mm]
& & & & \binom{x}{0}
\end{pmatrix}
\\
& = \lambda^x 
\begin{pmatrix}
\binom{x}{0} & \binom{x}{1} & \binom{x}{2} & \ldots
& \binom{x}{d-1}
\\[1mm]
& \binom{x}{0} & \binom{x}{1} & \ldots & \binom{x}{d-2}
\\
& & \ddots & \ddots &   \vdots
\\[-2mm]
& & & \ddots & \binom{x}{1}
\\[1mm]
& & & & \binom{x}{0}
\end{pmatrix}
\,.
\end{align*}
Since the complex logarithm, understood in its initial form as the inverse 
relation to the exponential function, is not univalent, neither is the matrix 
logarithm. 
In turn, there are several real analytic  extensions of the homomorphism 
in~\eqref{eq:homJintegers}. The map in Lemma~\ref{J_power_lemma} is the 
unique real analytic extension that stays within real matrices for all real 
values of~$\lambda$ if $\omega\in\R_{<0}$. We remark that for 
Lemma~\ref{J_power_lemma} it is not important that the complex logarithm is 
holomorphic at~$\lambda$ (i.e., in some neighborhood of~$\lambda$). Below we 
will provide an alternative, rather hands-on proof of Lemma~\ref{J_power_lemma}.

Picking another branch of the complex logarithm and hence another real analytic extension of the homomorphism in~\eqref{eq:homJintegers} would cause a re-indexing of the Fourier coefficients in Theorems~\ref{thm:intro_single}, 
\ref{Fourier:main_theorem} and~\ref{thm:intro_general}, but not a qualitative 
change in the results.
\end{enumerate}

\end{remark}

\begin{proof}[Proof of Lemma~\ref{J_power_lemma}]
We first show that the map in~\eqref{eq:HomJreals} is a group homomorphism. To 
simplify notation we set 
\[
 \mcJ(x) \coloneqq J^x
\]
for all~$x\in\R$, and recall that the matrix entries of~$\mcJ(x)$ are
\begin{equation}\label{eq:mcJx_entries}
 \mcJ(x)_{ij} = 
 \begin{cases} 
  \lambda^x \binom{x}{j-i} & \text{if $i\leq j$}\,,
  \\
  0 & \text{otherwise,}
 \end{cases}
\end{equation}
for~$i,j\in\{1,\ldots, d\}$. Let $x_1,x_2\in\R$ and set $B \coloneqq 
\mcJ(x_1)\mcJ(x_2)$. In order to show that $B=\mcJ(x_1+x_2)$ we first note that 
the matrix~$B$ is upper triangular as a product of such matrices. For the 
matrix 
entries of~$B$ in the upper triangle we proceed by direct calculation, taking 
advantage of~\eqref{eq:mcJx_entries}. For the matrix entry~$B_{ij}$ with 
$i,j\in\{1,\ldots,d\}$, $i\leq j$, we find 
\begin{align*}
B_{ij} & = \sum_{k=1}^d \mcJ(x_1)_{ik}\mcJ(x_2)_{kj} = \sum_{k=i}^j 
\mcJ(x_1)_{ik}\mcJ(x_2)_{kj}
\\
& = \lambda^{x_1+x_2}\sum_{k=i}^j \binom{x_1}{k-i}\binom{x_2}{j-k}
\\
& = \lambda^{x_1+x_2}\binom{x_1+x_2}{j-i}\,,
\end{align*}
where we used the Chu--Vandermonde identity for the last equality. This shows 
that $B=\mcJ(x_1+x_2)$ and hence the map in~\eqref{eq:HomJreals}, called~$\mc 
J$ 
above, is a group homomorphism. 

It remains to show that the map~$\mc J$ coincides with the homomorphism 
in~\eqref{eq:homJintegers}. Since~$\mc J$ is already known to be a group 
homomorphism, it suffices to show the equality of both maps on~$\N$. A 
straightforward proof by induction confirms that for all~$m\in\N$, the $m$-fold 
product of the matrix~$J$ equals~$\mcJ(m)$, and hence the claimed equality of 
maps.
\end{proof}

\subsection{The $y$-dependent Fourier coefficient functions}

The main constituents of the $y$-dependent part of the Fourier coefficient 
functions in the Fourier expansion of an $A$-twisted Laplace eigenfunction are 
the modified Bessel functions and their derivatives, or certain specific linear 
combinations of~$y^a$ and $y^a\log y$ with $a\in\pm s +\tfrac12\Z$, all 
depending on the spectral parameter~$s$. 

The definition of the modified Bessel functions admits some flexibility as to 
their domain of holomorphy. Classically, they are defined as functions that are 
analytic on $\C\smallsetminus(-\infty, 0]$. See~\cite[p. 45]{Watson44}. Because 
we allow here any complex numbers as eigenvalues of the twisting 
endomorphism~$A$ in~\eqref{aut_f_c_1}, we need to slightly adapt this classical 
definition in order to match our needs (more precisely, to allow for more 
compact statements) and change the domain of holomorphy of the modified Bessel 
functions accordingly. The choice of the complex logarithm was already discussed 
in Section~\ref{sec:cut}. We will now discuss the construction of the modified 
Bessel functions.

We recall that the endomorphism~$A$ is invertible and has a single Jordan block 
by hypothesis. We further recall that the eigenvalue of~$A$ is 
denoted~$\lambda$.

\subsubsection{Modified Bessel functions with suitable domain of 
holomorphy}\label{sec:Bessel}

The modified Bessel functions are classically defined as functions that are 
holomorphic on the cut plane~$\C\setminus(-\infty,0]$. More precisely, the 
\emph{modified Bessel function of the first kind with index~$\eta\in\C$} is the 
map
\[
 I_\eta\colon\C^\times \to \C
\]
given by 
\begin{equation}\label{eq:defIclass}
I_\eta(z) \coloneqq e^{\eta \log\left(\frac{z}{2}; -1\right)} 
\sum_{m=0}^{\infty} \frac{1}{m! \Gamma(\eta+m+1)} 
\left(\frac{z}{2}\right)^{2m}\,.
\end{equation}
See~\cite[p. 77]{Watson44}. Since the principal logarithm~$\log(\;\cdot\;;-1)$ 
is holomorphic on the cut plane~$\C\setminus(-\infty,0]$, so is~$I_\eta$. The 
\emph{modified Bessel function of the second kind with index~$\eta\in\C$} is the 
map 
\[
 K_\eta\colon\C^\times \to\C\,,
\]
which, for $\eta\notin\Z$, is defined as 
\begin{equation}\label{eq:defKclassnonint}
 K_\eta(z) \coloneqq \frac{\pi}{2} \frac{I_{-\eta}(z)-I_\eta(z)}{\sin(\eta 
\pi)}\,.
\end{equation}
For $\eta\in\Z$, it is 
\begin{equation}\label{eq:defKclassint}
 K_\eta(z) \coloneqq \lim_{\xi\to\eta} K_\xi(z)\,.
\end{equation}
See~\cite[p. 78]{Watson44}. For any value of~$\eta$, the map~$K_\eta$ is 
holomorphic on~$\C\setminus(-\infty,0]$. For our applications we need a 
realization of each modified Bessel function that 
is holomorphic on~$\C\setminus\omega\R_{\geq0}$, where~$\omega\R_{\geq0}$ is 
the branch cut chosen in Section~\ref{sec:cut}. For that reason we will 
slightly 
redefine the maps in~\eqref{eq:defIclass}, \eqref{eq:defKclassnonint} 
and~\eqref{eq:defKclassint} using the complex logarithm 
map~$\log(\;\cdot\;;\omega)$ from Section~\ref{sec:cut} as follows. 
 
We define our variant of the modified Bessel function of the first kind with 
index~$\eta\in\C$ to be the map 
\[
 I_\eta(\;\cdot\;;\omega)\colon\C^\times \to\C
\]
given by
\begin{equation}\label{eq:defI}
I_\eta(z;\omega) \coloneqq e^{\eta \log\left(z/2;\omega \right)} 
\sum_{m=0}^{\infty} \frac{1}{m! 
\Gamma(\eta+m+1)}\left(\frac{z}{2}\right)^{2m}\,.
\end{equation}
For our variant of the modified Bessel function of the second kind with 
index~$\eta\in\C$ we use the map 
\[
 K_\eta(\;\cdot\;;\omega)\colon\C^\times \to\C\,,
\]
which for $\eta\notin\Z$ is given by
\begin{equation}\label{eq:defKnonint}
K_\eta(z;\omega) \coloneqq \frac{\pi}{2} 
\frac{I_{-\eta}(z;\omega)-I_\eta(z;\omega)}{\sin(\eta \pi)}\,.
\end{equation}
For $\eta\in\Z$ we set 
\begin{equation}\label{eq:defKint}
 K_\eta(z;\omega) \coloneqq \lim_{\xi\to\eta} K_\xi(z;\omega)\,.
\end{equation}
For any value of~$\eta$, the maps~$I_\eta(\;\cdot\;;\omega)$ 
and~$K_\eta(\;\cdot\;;\omega)$ are holomorphic 
on~$\C\setminus\omega\R_{\geq0}$. 

They are closely related to the classical modified Bessel functions. For the 
modified Bessel function of the first kind we find 
\begin{equation}\label{I_prop_def}
I_\eta(z; \omega) = 
\begin{cases}
I_\eta(z) \quad &\text{for $\arg z \in \Omega_\omega$}\,,
\\
I_\eta(z) \cdot e^{- 2 \pi \iu \eta \sgn( \arg\omega)} & \text{otherwise}\,,
\end{cases}
\end{equation}
where~$\Omega_\omega$ is the set defined in~\eqref{Omega_definition}. See 
Figure~\ref{fig:log_cut_I}.
\begin{figure}
\centering
\includegraphics[width=270pt]{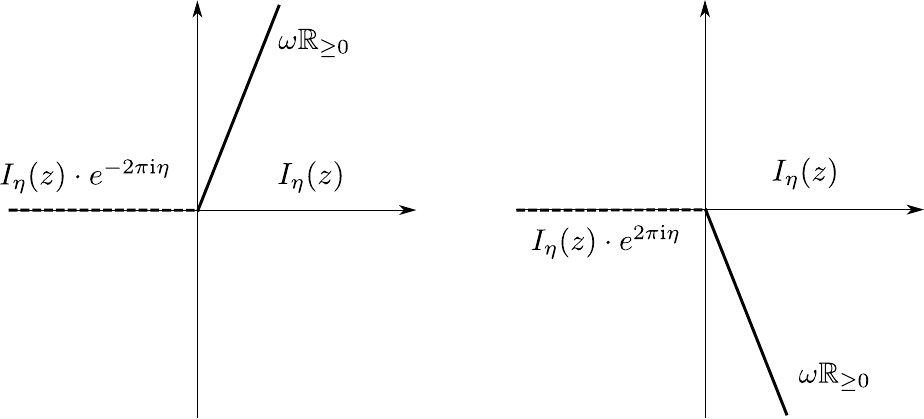}
\caption{The left figure indicates the relation 
between~$I_{\eta}(\;\cdot\;;\omega)$ and~$I_\eta$ for $\arg\omega>0$; the right 
figure for $\arg\omega<0$.} 
\label{fig:log_cut_I}
\end{figure}
For the modified Bessel function of the second kind with a non-integer 
index, say~$\eta\in\C\setminus\Z$, we have 
\begin{equation}\label{K_prop_def1}
 K_\eta(z;\omega) = \frac{\pi}{2} \frac{I_{-\eta}(z) - 
I_{\eta}(z)}{\sin(\eta\pi)} = K_\eta(z)
\end{equation}
if $\arg z\in\Omega_\omega$, and
\begin{align}\label{K_prop_def2}
K_\eta(z;\omega) 
& = \frac{\pi}{2} \frac{e^{2 \pi \iu \eta \sgn(\arg\omega)}I_{-\eta}(z)-e^{-2 
\pi \iu \eta \sgn (\arg\omega)}I_\eta(z)}{\sin(\eta \pi)} 
\\  \nonumber
&= \frac{\pi}{2} \cdot I_{-\eta}(z) \cdot  \frac{e^{2 \pi \iu \eta \sgn( 
\arg\omega)}  - e^{- 2 \pi \iu \eta \sgn(\arg\omega)}}{\sin(\eta \pi)} 
\\ \nonumber
& \quad + e^{- 2 \pi \iu \eta \sgn(\arg\omega)} K_\eta(z) 
\\ \nonumber
&=2 \pi \iu \sgn   (\arg\omega) \cdot  \cos(\pi \eta) \cdot I_{-\eta}(z) +  
e^{- 2 \pi \iu \eta \sgn (\arg\omega)} K_\eta(z)
\end{align} 
if $\arg z\notin\Omega_\omega$. See Figure~\ref{fig:log_cut_K}. For any integer 
index~$\eta\in\Z$, the relation simplifies to
\begin{equation}\label{K_prop_def3}
K_\eta(z;\omega) =
\begin{cases}
K_\eta(z) \quad & \text{for $\arg z \in \Omega_\omega$}\,,
\\
K_\eta(z)  + 2\pi\iu\, s(\omega)\cdot(-1)^\eta\cdot  I_{-\eta}(z) &  
\text{otherwise}\,,
\end{cases}
\end{equation}
where $s(\omega)\coloneqq \sgn(\arg\omega)$.
We note that for any value of~$\eta$, the maps~$I_\eta(\;\cdot\;;\omega)$ 
and~$K_\eta(\;\cdot\;;\omega)$ coincide with~$I_\eta$ and~$K_\eta$, 
respectively, on~$\R_{>0}$.

\begin{figure}
\centering
\includegraphics[width=350pt]{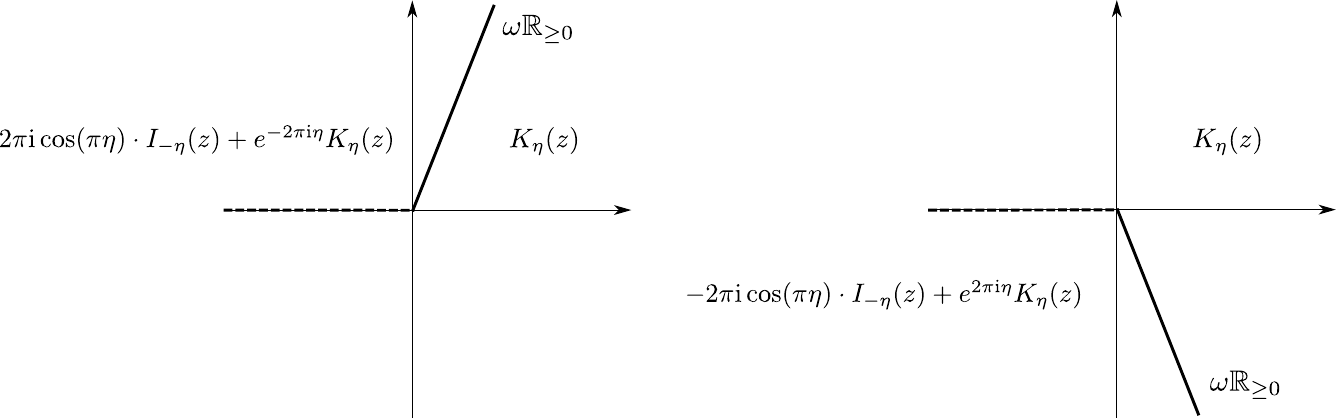}
\caption{The left figure shows the relation between~$K_{\eta}(\;\cdot\;; 
\omega)$ and~$K_\eta$ for $\arg\omega>0$; the right figure for $\arg\omega<0$.}  
\label{fig:log_cut_K}
\end{figure}

\subsubsection{The functions~$\fu$ and~$\futoo$ for 
$\alpha\not=0$}\label{sec:IandKpart1} 

With the complex logarithm and the modified Bessel function chosen adapted to 
the value of the eigenvalue~$\lambda$ of~$A$ we can now  define the principal 
constituents for the $y$-dependent Fourier coefficient functions, namely the 
two functions 
\[
 \fu,\futoo\colon \N_0 \times \left(\C\setminus\omega\R_{>0}\right) \times 
\R_{>0}\times \C \to \C\,,
\]
which provide the necessary generalization of the Fourier coefficient functions 
from the untwisted case. 

If the second argument is not zero, then these functions are essentially given 
by derivatives of the modified Bessel functions. More precisely, for any 
$m\in\N_0$, $\alpha\in\C\setminus\omega\R_{\geq0}$, $y\in\R_{>0}$ and $s\in\C$ 
we set 
\begin{align}
\label{eq:def_fu_general}
\fu(m,\alpha,y,s) & \coloneqq 
\frac{y^{\frac 1 2}}{\iu^m} \pa_\alpha^m  \left( I_{s-\frac 1 2} ( \alpha y; 
\omega) \right),
\intertext{and}
\label{eq:def_futoo_general}
\futoo(m,\alpha,y,s) &\coloneqq  \frac{y^{\frac 1 2}}{\iu^m} \pa_\alpha^m  
\left( K_{s-\frac 1 2} (\alpha y; \omega) \right)\,.
\end{align}
We caution that the differential operator~$\partial_\alpha^m$  is not the plain 
$m$-th derivative of~$\Psi_{s-\frac12}(\,\cdot\,;\omega)$ for 
$\Psi\in\{I,K\}$ but acts on the map
\[
 \alpha \mapsto \Psi_{s-\frac12}(\alpha y; \omega)\,,
\]
as shall be indicated by the additional brackets around $\Psi_{s-\frac12}(\alpha 
y;\omega)$. We further remark that the functions~$\fu$ and~$\futoo$ depend on 
the choice of~$\omega$. To keep the notational complexity to a minimum, we do 
not reflect the dependency in the symbols for these functions. However, it is 
visible from their domains.

If the second argument is zero, then the definition of the functions~$\fu$ and 
$\futoo$ is qualitatively different. We will provide it in the next section.

As we can see already in the statement of Theorem~\ref{thm:intro_single}, the 
Fourier coefficients with nonzero indices are linear combinations of the two 
functions~$\fu$ and~$\futoo$ for appropriate values of~$\alpha\not=0$. The 
zeroth Fourier coefficient, however, may also require the values of~$\fu$ 
and~$\futoo$ for $\alpha=0$.

\subsubsection{The functions~$\fu$ and~$\futoo$ for 
$\alpha=0$}\label{sec:IandKpart2}

If the eigenvalue~$\lambda$ of~$A$ is~$1$ (and only in this case), then we will need 
in addition the values of the functions~$\fu$ and~$\futoo$ with vanishing 
second argument for the Fourier expansion of the considered $A$-twisted Laplace 
eigenfunction, namely for its zeroth Fourier coefficient. As in the untwisted 
case, the zeroth Fourier coefficient is differently structured than all others, 
and attention needs to be paid to exceptional spectral parameters.  

In the untwisted case, $s=1/2$ is the only exceptional spectral parameter for 
the zeroth term of the Fourier expansion. In our situation, with the 
dimension~$d$ of the vector space~$V$ allowed to be any finite natural number, 
the set of exceptional values of the spectral parameter becomes
\begin{align} \label{s-specialvalues} 
E(d) & \coloneqq  \left\{ \frac12\pm j \setmid j=0, 1,\ldots,  \left\lfloor 
\frac{d-1}{2} \right \rfloor \right\}
\\
& \ = \left\{ \frac12 - \left\lfloor\frac{d-1}{2}\right\rfloor, \frac12 - 
\left( 
\left\lfloor\frac{d-1}{2}\right\rfloor + 1 \right),\ldots \right. \nonumber 
\\ & \hphantom{\frac12 - \left( \left\lfloor\frac{d-1}{2}\right\rfloor + 1 
\right)} \left.
\ldots, -\frac12, \frac12, \frac32,\ldots, \frac12 + \left\lfloor \frac{d-1}{2} 
\right \rfloor\right\}\,.
\nonumber
\end{align} 
The definitions of~$\fu$ and~$\futoo$ for vanishing second argument (thus, 
$\alpha=0$) require three series of combinatorial coefficients, which we now 
discuss. Each series consists of a function that is defined on a certain subset 
of~$\N_0\times\C$. 

The \textbf{first series} is given by a function, $\cone$, defined on the subset 
of~$\N_0\times\C$ that consists of the pairs~$(k,s)$ that satisfy
\begin{equation}\label{eq:cone_restr}
 s\notin\left\{ \frac12-j\setmid 
j=1,\ldots,\left\lfloor\frac{k}2\right\rfloor\right\}\,.
\end{equation}
We set 
\begin{equation}\label{eq:def_cone_start}
 \cone(0,s) \coloneqq \cone(1,s) \coloneqq 1
\end{equation}
and, for $k\geq 2$, 
\begin{equation}\label{def_cof_conc}
\cone(k,s)  \coloneqq 
(-1)^{\left\lfloor \frac{k}{2} \right\rfloor} \dfrac{k!}{2^{\left\lfloor 
\frac{k}{2} \right\rfloor} \left\lfloor \frac{k}{2} \right\rfloor! 
\prod\limits_{j=1}^{ \left\lfloor \frac{k}{2} \right\rfloor} (2j+2s-1)}\,.
\end{equation}
Using the convention that the empty product equals~$1$, the definitions 
in~\eqref{eq:def_cone_start} may be subsumed into~\eqref{def_cof_conc}. 
Moreover, by taking advantage of the Gamma function, we can express the 
definitions in~\eqref{eq:def_cone_start} and~\eqref{def_cof_conc} as
\begin{equation}
 \cone(k,s) = \left(-\frac14\right)^{\left\lfloor\frac{k}2\right\rfloor} 
\frac{\Gamma(k+1)\Gamma\left(s+\frac12\right)}{\Gamma\left(\left\lfloor\frac{k}
2\right\rfloor + 1\right)\Gamma\left(s+\left\lfloor\frac{k}2\right\rfloor + 
\frac12\right)}
\end{equation}
for all pairs~$(k,s)\in\N_0\times\C$ obeying~\eqref{eq:cone_restr}. 

The \textbf{second series} is given by a function, $\ctwo$, defined on the set
\[
 \N_0 \times \left( E(d) \cap \bigl(-\infty, \tfrac12\bigr]\right)\,.
\]
We set 
\begin{equation}\label{eq:ctwo_start1}
\ctwo\Bigl(0, \frac12\Bigr)\coloneqq \ctwo\Bigl(1, \frac12\Bigr) \coloneqq 1\,. 
\end{equation}
For $s \in E(d)\cap \bigl(-\infty,-\frac12\bigr]$ we set
\begin{equation}\label{eq:ctwo_start2}
\ctwo(0, s) \coloneqq  (2s) \cone(-2s-1,s)\,, \qquad \ctwo(1, s) \coloneqq  
(2s-2) \cone(-2s,s)\,.  
\end{equation}
For $k \geq 2$ and any $s \in E(d)\cap\bigl(-\infty,\frac12\bigr]$, we define 
\begin{align}
\ctwo (k, s) & \coloneqq  -\frac14 \frac{ 
(1-2s+k)(-2s+k)}{\left\lfloor\frac{k}{2}\right\rfloor \left(\frac 1 2 -s 
+\left\lfloor\frac{k}{2}\right\rfloor\right)}\, \ctwo(k-2, s) \label{defctwo}
\\[5mm]
& \ = 
\begin{cases}
\left(-\frac14\right)^{\left\lfloor\frac{k}{2}\right\rfloor} 
\dfrac{\prod\limits_{j=1}^k (1-2s+j)}{\left\lfloor\frac{k}{2}\right\rfloor! 
\prod\limits_{j=1}^{\left\lfloor\frac{k}{2}\right\rfloor} (1/2-s+j)}\, 
\ctwo(0,s) & \quad\text{for $k$ even,} \nonumber
\\[10mm]
\left(-\frac14\right)^{\left\lfloor\frac{k}{2}\right\rfloor} 
\dfrac{\prod\limits_{j=2}^k (1-2s+j)}{\left\lfloor\frac{k}{2}\right\rfloor! 
\prod\limits_{j=1}^{\left\lfloor\frac{k}{2}\right\rfloor} (1/2-s+j)}\, 
\ctwo(1,s) & \quad\text{for $k$ odd.}
\end{cases}\nonumber
\end{align}
Using~\eqref{def_cof_conc}, we may write~\eqref{defctwo} also as
\begin{equation}
\ctwo(k,s) = 
\begin{cases}
\displaystyle
\binom{k+1-2s}{k} \cone(k,1-s)\ctwo(0,s) & \text{for $k$ even,}
\\[5mm]
\displaystyle
\binom{k+1-2s}{k} \cone(k,1-s)\ctwo(1,s) & \text{for $k$ odd.}
\end{cases}
\end{equation}

\begin{remark}  
In~\eqref{eq:ctwo_start2} we use the function~$\cone$ at the value $(k,s) = 
(-2s-1,s)$ with $s\in E(d)\cap \bigl(-\infty,-\tfrac12\bigr]$. We now check 
that $\cone$ is indeed defined at this point. From $s\leq -\tfrac12$ it follows 
that $k\geq 0$. Further, since $s\in E(d)$, we have $k\in\N_0$ and 
\[
 \left\lfloor \frac{k}{2} \right\rfloor = -s -\frac12\,,
\]
and hence 
\[
 s < \frac12 - \left\lfloor \frac{k}{2} \right\rfloor = s + 1\,.
\]
Therefore, $(k,s)$ is an admissible pair for which $\cone$ can be applied (see 
also \eqref{eq:cone_restr}). Analogously, one checks that $\cone(-2s,s)$ is 
well-defined.

We further remark that for $s=\tfrac12$, neither the point~$(-2s-1,s) = (-2,s)$ 
nor the point~$(-2s,s)=(-1,s)$ are in the domain of~$\cone$. For this reason, 
\eqref{eq:ctwo_start1} cannot be subsumed into~\eqref{eq:ctwo_start2}.
\end{remark}

The \textbf{third series} is given by a function, $\cthree$, defined on  
\[
 \N_0 \times \left( E(d) \cap \bigl(-\infty,\tfrac12\bigr]\right)\,.
\]
We set 
\begin{equation}\label{eq:cthree_start1}
\cthree(0,s) \coloneqq  \cthree(1,s) \coloneqq 0
\end{equation}
and, for $k\in\{2,3\}$, 
\begin{equation}\label{eq:cthree_start2}
\cthree(k,s) \coloneqq  - \frac{\frac 1 2 - s +2\lfloor \frac{k}{2}\rfloor 
}{2\lfloor\frac{k}{2}\rfloor\,\left(\frac 1 2 - s 
+\lfloor\frac{k}{2}\rfloor\right)}\,\ctwo(k,s)\,.
\end{equation}
For $k \geq 4$ we set
\begin{align}\label{eq:def_cthree}
\cthree(k, s) & \coloneqq  - \frac{\frac 1 2 - s +2\lfloor \frac{k}{2}\rfloor 
}{2\lfloor\frac{k}{2}\rfloor\,\left(\frac 1 2 - s 
+\lfloor\frac{k}{2}\rfloor\right)}\,\ctwo(k,s) 
\\ \nonumber
& \qquad + \sum_{j=1}^{\lfloor\frac{k}{2}\rfloor-1} 
(-1)^{\lfloor\frac{k}{2}\rfloor+j+1}\,\frac{\frac 1 2 - s +2j}{2j\,\left( \frac 
1 2 - s +j \right)}\,\frac{1}{4^{\lfloor\frac{k}{2}\rfloor-j}}
\\
&\qquad\qquad \hphantom{\coloneqq}\ \times \frac{ 
\prod\limits_{\ell=2j+1+\delta_\odd(k)}^{k} (1-2s+\ell)}{ 
\prod\limits_{\ell=j+1}^{\lfloor\frac{k}{2}\rfloor} \ell\,\left( \frac 1 2 - s 
+\ell\right)}\, \ctwo(2j+ \delta_\odd(k), s)\,, \nonumber
\end{align}
where
\begin{equation}\label{eq:def_deltaodd}
\delta_\odd(k) \coloneqq \begin{cases} 1 & \text{for $k$ odd,} \\ 0 & \text{for 
$k$ even.} \end{cases}
\end{equation}
Using the convention that empty sums equal~$0$, we may 
subsume~\eqref{eq:cthree_start2} in~\eqref{eq:def_cthree}.

With these preparations we can now define the functions~$\fu$ and~$\futoo$ for arguments with vanishing second component.
We recall that the function~$\cone$  is not defined on all of~$\N_0\times\C$. See~\eqref{eq:cone_restr}.
To simplify the statements of the functions~$\fu$ and~$\futoo$ in 
Definition~\ref{defazero}, we extend the function~$\cone$---somewhat 
arbitrarily---to the pairs $(k,s)\in\N_0\times\C$ with $s$ being contained in 
the ``forbidden'' set in~\eqref{eq:cone_restr} by setting 
\[
 \cone(k,s)\coloneqq 1\,.
\]
In our applications, we will never use the functions~$\fu$ and~$\futoo$ at 
those arguments for which the definition uses the function~$\cone$ at these 
special values.  

\begin{defi}\label{defazero} 
Let $m\in\N_0$, $y\in\R_{>0}$ and $s\in\C$. 
\begin{enumerate}[label=$\mathrm{(\roman*)}$, ref=$\mathrm{\roman*}$]
\item For
\[
 s\in E(d) \cap \left[\frac12-\frac{m}{2},\frac12\right]
\]
we set 
\begin{align}\label{def:azeroslesshalf} 
\begin{aligned}
\fu(m,0,y,s) \coloneqq  \ctwo(m-1+2s, s) & y^{s + 
2\lfloor\frac{m}{2}\rfloor}\log y 
\\
 & + \cthree(m-1+2s,s) y^{s+2\lfloor\frac{m}{2}\rfloor}\,.
\end{aligned}
\end{align}
For all other values of~$s$ we set 
\begin{equation}\label{def:I_azeronoted}
\fu(m,0,y,s) \coloneqq \cone(m,s)\, y^{s+ 2\lfloor\frac{m}{2}\rfloor}\,.
\end{equation}
\item For 
\[
 s\in E(d) \cap \left[\frac32,\frac12 + \frac{m}2 \right]
\]
we set 
\begin{align}\label{def:azerosbiggerhalf} 
\begin{aligned}
\futoo(m,0,y,s) \coloneqq  \ctwo(m +1 &-2s, 1-s) y^{1 - s + 
2\lfloor\frac{m}{2}\rfloor}\log y
\\
& + \cthree(m+1-2s, 1-s) y^{1-s+2\lfloor\frac{m}{2}\rfloor}\,.
\end{aligned}
\end{align}
For all other values of~$s$ we set 
\begin{equation}\label{def:K_azeronoted} 
\futoo (m, 0, y, s) \coloneqq  \cone(m,1-s)\, y^{1 -s + 
2\lfloor\frac{m}{2}\rfloor}\,.
\end{equation} 
\end{enumerate}
\end{defi}

\subsection{Matrices of scalar coefficients}\label{sec:coeffmatrices}

As a last component for the statement of the refined version of 
Theorem~\ref{thm:intro_single} we now define two sets of coefficient matrices. 

For $n, k\in\N_0$ we denote by~$s(n,k)$ the \emph{signed Stirling number of the first kind}, which is the number of permutations of $n$~elements with 
$k$~disjoint cycles.
The Stirling numbers are also characterized as the coefficients in the polynomial
\begin{equation}\label{eq:charstirling}
\prod_{\ell=0}^{n-1} (x-\ell) = \sum_{k=0}^n s(n,k)x^k\,,\qquad x\in\R\,,
\end{equation}
where we use again the convention that empty products equal~$1$, and empty sums equal~$0$.
In particular, we have
\[ 
s(0,0) =1\,, \quad\text{and}\quad s(n, k) = 0 \quad\text{for $n,k \in \N\,,\  
n<k$\,.}
\]
Further we use $\stirmat=(\stirelem_{i j})$ to denote the $(d \times d)$-\emph{Stirling matrix}.
We will use it as a transformation to untangle a system of differential equations, hence the notation~$U$ for this matrix.
Its entries are  
\begin{equation}\label{def:stirlingmatrix} 
\stirelem_{i j} = \frac{s(d-i,d-j)}{(d-i)!} 
\end{equation} 
for $i,j\in\{1,\ldots, d\}$.

We denote by~$\Coeff_{1;d}$ the subset of matrices~$(c_{ij})\in \Mat(d\times d; \C)$ whose entries satisfy 
\[
c_{ij} = 0 \quad\text{for $i>j$}\,,
\]
and for~$i\in\{1,\ldots, d\}$ and~$p\in\{0,\ldots,d-i\}$
\begin{align*}
c_{i,i+p} &=  c_{1,1+p} \, \frac{\prod\limits_{k=1}^{i-1} 
(d-p-k)}{\prod\limits_{k=1}^{i-1}(d-k)}\,.
\end{align*}
Finally, we let $\Coeff_{0;d}$ denote the subset of matrices~$(c_{ij})\in 
\Mat(d\times d; \C)$ whose entries satisfy 
\begin{align*}
c_{i+2k,j+2k} =  c_{ij}\, 
\frac{\prod\limits_{p=0}^{2k-1}\big(d-(j+p)\big)}{\prod\limits_{p=0}^{2k-1}
\big(d-(i+p)\big)}
\end{align*}
for all possible combinations of~$i,j,k\in\{1,\ldots, d\}$, and
\[
c_{ij} = 0 \qquad\text{for $i\in\{3,\ldots, d\},\ j\in\left\{1,\ldots, 
2\left\lfloor \frac{i-1}2\right\rfloor\right\}$\,.}
\]
We note that $\Coeff_{1;d}$ is a $d$-dimensional complex subspace of 
upper triangular matrices, the entries of which are completely determined by 
the entries of the first row. For $d=1$, the set~$\Coeff_{0;d}$ is 
$1$-dimensional, coinciding with~$\C$.
For $d\geq 2$, the set~$\Coeff_{0;d}$ is a $2d$-dimensional subspace of $\Mat(d\times d;\C)$, the entries of which are determined by the first two rows.

\subsection{Refined statement of 
Theorem~\ref{thm:intro_single}}\label{sec:refined}

With all definitions in place, we can now present one of our main results, the refined statement of Theorem~\ref{thm:intro_single}.
To that end, for each~$n\in\Z$, we set
\begin{equation}\label{eq:def_alpha_n_lambda}
 \alpha_n \coloneqq 2\pi n - \iu \log(\lambda;\omega)\,,
\end{equation}
pick $\eps_n\in\{0,1\}$ such that 
\begin{equation}\label{eq:pickeps}
 (-1)^{\eps_n}\alpha_n \notin\omega\R_{>0}\,,
\end{equation}
and set
\begin{equation}\label{eq:def_tildealpha}
 \tilde\alpha_n \coloneqq (-1)^{\eps_n}\alpha_n\,.
\end{equation}
Further we let 
$S\in\Mat(d\times d;\C)$ be the ``alternating'' diagonal matrix 
\begin{equation}\label{eq:def_S}
 S \coloneqq 
 \begin{pmatrix}
  (-1)^{d-1}
  \\
  & (-1)^{d-2} 
  \\
  & & \ddots 
  \\
  & & & -1 
  \\
  & & & & 1 
  \\
  & & & & & -1 
  \\
  & & & & & & 1
 \end{pmatrix}\,,
\end{equation}
the \emph{sign matrix}.

\begin{thm}[Single Jordan block; refined statement]\label{Fourier:main_theorem} 
Let $V$ be a $d$-dimen\-sional complex vector space, identified with~$\C^d$. Let 
$f\colon\h\to V$ be a smooth map such that 
\[
 \Delta f = s(1-s)f
\]
for some $s\in\C$, and 
\[
 f(z+1) = Jf(z) \qquad\text{for all~$z\in\h$,}
\]
where $J=J(\lambda)$ is a Jordan-like matrix with $\lambda\not=0$. Then the 
Fourier expansion of~$f$ is 
\[
 f(x+\iu y) = \sum_{n\in\Z} J^x \hat f_n(y,s) e^{2\pi \iu n x}
\]
for all $z=x+\iu y\in\h$, where, for each~$n\in\Z$, the Fourier coefficient 
function is 
\[
 \hat f_n(y,s) =  \stirmat   C_n  S^{\eps_n}\begin{pmatrix}
\fu\bigl(d-1, \tilde\alpha_n, y,s\bigr) \\
\vdots   \\
\fu\bigl(1, \tilde\alpha_n, y,s\bigr) \\
\fu\bigl(0, \tilde\alpha_n, y,s\bigr)
\end{pmatrix} + \stirmat  D_n S^{\eps_n}\begin{pmatrix}
\futoo\bigl(d-1, \tilde\alpha_n, y,s\bigr) \\
\vdots   \\
\futoo\bigl(1, \tilde\alpha_n, y,s\bigr) \\
\futoo\bigl(0, \tilde\alpha_n,y,s\bigr)
\end{pmatrix}
\] 
with
\[
C_n, D_n \in \begin{cases}
\Coeff_{0;d} \quad &\text{if $\tilde\alpha_n=\alpha_n=0$}\,, \\
\Coeff_{1;d} \quad &\text{otherwise.}
\end{cases}
\]
\end{thm}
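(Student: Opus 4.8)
The plan is to proceed along the ``survey of proofs'' outline and realize each step with the machinery already set up. First I would reduce to the stated setting: $A=J=J(\lambda)$ with $\lambda\neq 0$ is already the hypothesis, so no splitting is needed here. The first real step is the \emph{periodization}: set $F(z)\coloneqq J^{-x}f(z)$ for $z=x+\iu y\in\h$, where $J^{-x}$ is the continuous homomorphism from Lemma~\ref{J_power_lemma} evaluated at $-x$. Using $J^{-(x+1)}=J^{-1}J^{-x}$ and $f(z+1)=Jf(z)$ one checks $F(z+1)=F(z)$, so $F$ is smooth and $1$-periodic in~$x$, hence admits an $L^2$-Fourier expansion $F(x+\iu y)=\sum_{n\in\Z}\hat f_n(y)e^{2\pi\iu nx}$ with $V$-valued coefficient functions $\hat f_n$ depending smoothly on~$y$; the smoothness and termwise-differentiability follow from elliptic regularity of~$\Delta$ and standard Fourier-theoretic estimates on the rapid decay of the coefficients of a smooth periodic function. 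Transporting back gives $f(x+\iu y)=\sum_{n}J^x\hat f_n(y)e^{2\pi\iu nx}$, which is the asserted shape of the expansion.

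\textbf{Extracting and solving the ODE.} Next I would insert this expansion into $\Delta f=s(1-s)f$. Applying $-y^2(\partial_x^2+\partial_y^2)$ termwise and using that $J^x=e^{x\log J}$ satisfies $\partial_x(e^{2\pi\iu nx}J^x)=e^{2\pi\iu nx}(2\pi\iu n+\log J)J^x$, one obtains for each~$n$ the vector-valued ODE
\[
\Bigl(y^2\partial_y^2+s(1-s)+y^2\bigl(2\pi\iu n+\log J\bigr)^2\Bigr)\hat f_n(y)=0,\qquad y>0,
\]
which is exactly \eqref{eq:ODE_twist_intro} after identifying $\partial_x^2|_{x=0}(e^{2\pi\iu nx}J^x)=(2\pi\iu n+\log J)^2$. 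Now $\log J=\log(\lambda;\omega)I+\log(I+N)$ from the Remark after Lemma~\ref{J_power_lemma}, so $2\pi\iu n+\log J=\iu\alpha_n I+\log(I+N)$ with $\alpha_n$ as in \eqref{eq:def_alpha_n_lambda}; here $\log(I+N)$ is nilpotent and upper triangular. Conjugating by the Stirling matrix~$T$ (this is the ``base change'' alluded to in Section~\ref{s:system}) turns $\log(I+N)$ into the standard nilpotent Jordan block, so that $(2\pi\iu n+\log J)^2$ becomes, up to~$T$, a single Jordan block with eigenvalue $-\alpha_n^2$. The system thereby becomes a \emph{cascade}: the bottom component solves a scalar modified Bessel equation with parameter~$\alpha_n$, and each higher component solves the same equation with an inhomogeneity built from lower components — precisely the structure whose solutions are the $\partial_\alpha^m$-derivatives of Bessel functions, i.e. the functions $\fu$ and $\futoo$ of Section~\ref{sec:IandKpart1}. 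Appealing to the solution analysis carried out in Sections~\ref{sec:proof_zero}--\ref{sec:proof_nonzero}, for $n$ with $\alpha_n\neq 0$ (equivalently for all $n$ when $\lambda\neq1$, and for $n\neq0$ when $\lambda=1$) the general solution of the cascade is the column vector with entries $\fu(d-1,\tilde\alpha_n,y,s),\dots,\fu(0,\tilde\alpha_n,y,s)$ and the analogous $\futoo$-column, multiplied on the left by arbitrary matrices in the solution space; the sign matrix~$S^{\eps_n}$ records the replacement of $\alpha_n$ by $\tilde\alpha_n=(-1)^{\eps_n}\alpha_n$ needed to land in the domain $\C\setminus\omega\R_{>0}$, which changes $\partial_\alpha^m$ by $(-1)^m$ on the $m$-th entry. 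Undoing the conjugation reintroduces the factor~$T$ in front.

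\textbf{The zeroth coefficient and the coefficient spaces.} The remaining case is $\alpha_0=0$, which forces $\lambda=1$; here the modified Bessel equation degenerates and the relevant solutions are powers $y^{s+2\lfloor m/2\rfloor}$ and $y^{1-s+2\lfloor m/2\rfloor}$, together with $\log y$-corrections exactly at the exceptional spectral parameters $E(d)$ of \eqref{s-specialvalues} — this is the resonance phenomenon, and the precise linear combinations are encoded in the coefficient functions $\cone,\ctwo,\cthree$ and packaged into $\fu(m,0,y,s)$, $\futoo(m,0,y,s)$ in Definition~\ref{defazero}. One then verifies, again from the cascade structure, that the freedom in choosing the matrices $C_n,D_n$ is governed by the compatibility conditions forced by the off-diagonal coupling: for $\alpha_n\neq0$ these collapse the matrix to its first row, giving $\Coeff_{1;d}$, whereas for $\alpha_0=0$ with $d\geq2$ the even/odd decoupling of the cascade leaves two free rows, giving $\Coeff_{0;d}$. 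Finally I would assemble the pieces: convergence of the reconstructed series and termwise validity of $\Delta$ follow because each $\hat f_n$ grows at most polynomially in~$n$ for fixed~$y$ (from the explicit Bessel asymptotics of Section~\ref{sec:growthIK}) while the Fourier coefficients of the smooth function~$F$ decay rapidly, so the two expansions agree. \textbf{The main obstacle} I anticipate is not any single estimate but the careful bookkeeping in solving the inhomogeneous cascade: identifying that the particular solutions are precisely $\tfrac1{\iu^m}\partial_\alpha^m$ of the base Bessel functions (and their $\alpha=0$ degenerations), pinning down the exact structure of the solution space as $\Coeff_{1;d}$ versus $\Coeff_{0;d}$, and tracking the branch-cut bookkeeping ($\omega$, $\tilde\alpha_n$, $S^{\eps_n}$) consistently through the $T$-conjugation — this is where the bulk of Sections~\ref{s:system}--\ref{sec:proof_nonzero} is spent, and the present theorem is essentially the clean restatement of what those sections establish.
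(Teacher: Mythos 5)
Your plan mirrors the paper's own proof almost step for step: periodize via $F=J^{-x}f$ (Proposition~\ref{prop:periodization}), extract an $n$-indexed second-order system (Lemma~\ref{descent_to_a_system}), conjugate by the Stirling matrix~$T$ to get a cascade (Proposition~\ref{simplified_system}), solve the cascade bottom-up via $\partial_\alpha^m$-derivatives of modified Bessel functions and their $\alpha=0$ degenerations (Propositions~\ref{prop:basis_sol_zero} and~\ref{prop:basis_sol_nonzero}), and identify $\Coeff_{1;d}$ versus $\Coeff_{0;d}$ as the centralizers that govern the free parameters. One remark and one correction are in order. The remark: your route to the ODE, writing $J^x=e^{x\log J}$, factoring $J^{-x}$ out of the series (which is allowed, since $\lambda\neq0$ and Lemma~\ref{lem:diff} licenses termwise differentiation), and invoking ordinary Fourier uniqueness, is slightly more direct than the paper's, which keeps $J^x$ inside the sum, expands it in powers of $x$, and appeals to the polynomial-Fourier vanishing statement Lemma~\ref{fourier_observation}; both are valid. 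The correction: $T^{-1}\log(I+N)T$ is \emph{not} the standard nilpotent Jordan block, and the conjugate of $(2\pi\iu n+\log J)^2$ is \emph{not} a single Jordan block with eigenvalue $-\alpha_n^2$. By Lemma~\ref{Cmatrix} it equals $-\alpha_n^2 I-H(\iu\alpha_n)$, a three-band upper triangular matrix whose first superdiagonal carries $\iu\alpha_n(d-i)$-type entries and whose second superdiagonal carries $\binom{d-i}{2}$-type entries (Proposition~\ref{simplified_system}). This is precisely why the cascade is a second-order recursion in the component index, in agreement with Lemma~\ref{lem:genth}; your subsequent description of ``inhomogeneity built from lower components'' is therefore right in spirit even though the stated form of the conjugated matrix is wrong, and it is the presence of the second superdiagonal that makes the even/odd decoupling at $\alpha=0$ (and hence $\Coeff_{0;d}$) nontrivial rather than automatic.
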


The proof of Theorem~\ref{Fourier:main_theorem} will be discussed in 
Sections~\ref{sec:periodization}--\ref{sec:proof_nonzero}.

\section{Complexity reduction}\label{sec:reduction}

In this section we show how Theorem~\ref{thm:intro_general} follows immediately 
from Proposition~\ref{prop:intro_vanish} and Theorem~\ref{Fourier:main_theorem} 
(or Theorem~\ref{thm:intro_single}). In addition we dispatch 
Proposition~\ref{prop:intro_vanish}, which is the ``trivial case'' in which the 
Fourier expansion vanishes identically. 

Throughout this section let $V$ be a finite-dimensional complex vector space 
and let $A$ be any endomorphism of~$V$. We do not assume that~$A$ acts 
irreducibly or is invertible.

\begin{proof}[Proof of Theorem~\ref{thm:intro_general} assuming  
Proposition~\ref{prop:intro_vanish} and Theorem~\ref{Fourier:main_theorem}]
We consider a smooth map $f\colon\h\to\nobreak V$ that satisfies
\[
 \Delta f = s(1-s)f\qquad\text{and}\qquad f(z+1) = Af(z) \quad\text{for 
all~$z\in\h$}\,.
\]
We fix a basis of~$V$ with respect to which the endomorphism~$A$ is represented 
by a matrix~$J$ in Jordan-like normal form, say
\[
 J = 
 \begin{pmatrix}
  J_1 
  \\
  & \ddots 
  \\
  &  & J_p
 \end{pmatrix}
\]
such that for each $j\in\{1,\ldots, p\}$, the submatrix~$J_j$ is a Jordan-like 
block matrix of size~$(d_j\times d_j)$ with eigenvalue~$\lambda_j\in\C$. 
See~\eqref{eq:Jordan_intro}--\eqref{eq:intro_lambda}. Let 
\[
 V = V_1 \oplus \ldots \oplus V_p\qquad\text{and}\qquad f = f_1 
\oplus\ldots\oplus f_p
\]
be the associated decompositions of~$V$ and~$f$. The linearity of~$A$ and the 
property that $f$ is $A$-twisted imply that for each $j\in\{1,\ldots,p\}$, the 
map $f_j$ is $J_j$-twisted:
\[
 f_j(z+1) = J_jf_j(z) \quad\text{for all $z\in\h$}\,.
\]
For $j\in\{1,\ldots,p\}$ with $\lambda_j=0$, 
Proposition~\ref{prop:intro_vanish} 
now implies that $f_j=0$. Moreover, the linearity of~$\Delta$ and the property 
that $f$ is a $\Delta$-eigenfunction with spectral parameter~$s$ imply that 
$f_j$ is so for each $j\in\{1,\ldots, p\}$. Further, if we fix for all 
$j\in\{1,\ldots, p\}$ with $\lambda_j=0$ any (possibly non-continuous) 
extension of the map 
\[
 \N_0 \to \Mat(d_j\times d_j;\C)\,,\quad n\mapsto J_j^n\,,
\]
to a map (not necessarily a homomorphism)
\[
 \R\to\Mat(d_j\times d_j;\C)\,,\quad x\mapsto J_j^x\,,
\]
and use the extensions discussed in Lemma~\ref{J_power_lemma} for those 
$j\in\{1,\ldots, p\}$ with $\lambda_j\not=0$, then 
\[
 J^x = 
 \begin{pmatrix}
  J_1^x 
  \\
  & \ddots 
  \\
  &  & J_p^x
 \end{pmatrix}
\]
for all~$x\in\R$. Therefore, linearity yields that the Fourier expansion of~$f$ 
is the direct sum of the separate Fourier expansions of the functions~$f_j$, 
$j\in\{1,\ldots,p\}$. Now taking advantage of the Fourier expansion for the 
components~$f_j$ for $j\in\{1,\ldots,p\}$ with $\lambda_j\not=0$ as developed 
in Theorem~\ref{Fourier:main_theorem} completes the proof of 
Theorem~\ref{thm:intro_general}.
\end{proof}

As a first step to establish the conditions on which the previous proof relies 
we now prove the following statement on vanishing eigenfunctions for nilpotent 
endomorphisms, of which Proposition~\ref{prop:intro_vanish} is a special case.

\begin{prop}\label{p:lambdais0}  
Suppose that the map~$f\colon \h \to V$ satisfies
\[
 f(z+1) = A f(z)
\]
for all $z \in \h$. If $A$ is nilpotent, then $f=0$.
\end{prop}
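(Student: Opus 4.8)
The plan is to prove this directly from nilpotency by iterating the twist-periodicity relation, without using the Laplace eigenfunction property at all. Set $d \coloneqq \dim V$. Since $A$ is a nilpotent endomorphism of a $d$-dimensional space, its minimal polynomial divides $X^d$, so $A^d = 0$.

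Next I would iterate $f(z+1) = Af(z)$. For any $z \in \h$ and any $j \in \N_0$ we have $z + j \in \h$, because adding a real number does not change the imaginary part; hence a straightforward induction on $k$ gives
\[
 f(z+k) = A^k f(z) \qquad \text{for all } z \in \h,\ k \in \N_0.
\]
Taking $k = d$ and using $A^d = 0$ yields $f(z+d) = 0$ for every $z \in \h$.

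Finally, the horizontal translation $z \mapsto z+d$ is a bijection of $\h$ onto itself, with inverse $w \mapsto w-d$. Thus every $w \in \h$ may be written as $w = z + d$ with $z = w - d \in \h$, and therefore $f(w) = f(z+d) = 0$. Hence $f \equiv 0$.

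There is essentially no obstacle in this argument; the only point needing a moment's attention is that the relation $f(z+1) = Af(z)$ is only assumed on $\h$, so the iteration must remain inside $\h$ — which it does automatically since real translations preserve $\h$ — and the same observation underlies the concluding surjectivity step. Proposition~\ref{prop:intro_vanish} follows at once by applying Proposition~\ref{p:lambdais0} to each Jordan-ilk block $J_j$ of $A$ with eigenvalue $0$, since such a block is nilpotent.
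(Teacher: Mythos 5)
Your argument is correct and is essentially identical to the paper's proof: both iterate the twist-periodicity to obtain $f(z+d) = A^d f(z) = 0$ and then conclude $f \equiv 0$. You simply make explicit the two small points the paper leaves implicit (the induction and the surjectivity of $z \mapsto z+d$ on $\h$), which is fine.
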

 
\begin{proof} 
 Let $d\coloneqq\dim V$. For all $z \in \mathbb H$ we have $f(z+d)=A^d f(z)$. 
Now $A^d=0$ yields $f=0$.
\end{proof}

\section{Periodization}\label{sec:periodization}

In the course of this section and the following ones we will provide a proof 
of Theorem~\ref{Fourier:main_theorem}. Throughout these sections we suppose 
that $V$ is a complex vector space of dimension $d\in\N$ and that $A$ is an 
invertible endomorphism of~$V$ that acts irreducibly on~$V$. We fix a basis 
of~$V$ with respect to which $A$ is represented by the Jordan-like matrix 
\[
 J = J(\lambda) = 
 \begin{pmatrix}
  \lambda & \lambda
  \\
  & \ddots & \ddots 
  \\
  & & \lambda & \lambda 
  \\
  & & & \lambda
 \end{pmatrix}
\]
for some~$\lambda\in\C^\times$, which is then the eigenvalue of~$A$. Using this 
basis, we identify the vector spaces~$V$ and~$\C^d$. Further, we fix a 
complex logarithm via a choice of~$\omega\in\C$ satisfying the properties in 
Section~\ref{sec:cut}, and we let $f\colon\h\to V$ be a smooth map that is 
$J$-twist periodic (or $A$-twist periodic) with period~$1$:
\[
 f(z+1) = Jf(z) \quad\text{for all $z\in\h$}\,.
\]
Starting with the next section, we will suppose in addition that $f$ is a 
Laplace eigenfunction. For the results of the present section, this additional 
hypothesis is not needed.

At the end of Section~\ref{s:intro} we briefly outlined the strategy of the 
proof of Theorem~\ref{Fourier:main_theorem}. As explained there, the first 
major obstacle towards a Fourier expansion of~$f$ is its non-periodicity. In 
this section we will show how to overcome it by means of a periodization, which 
uses in a crucial way that the map
\[
 \R\to\Mat(d\times d;\C)\,,\quad x\mapsto J^x\,,
\]
is a group homomorphism (see Lemma~\ref{J_power_lemma}). We note that this approach is also known from the study of monodromy of ordinary differential equations, vector bundles and modular forms. In addition we will 
provide a vanishing statement (Lemma~\ref{fourier_observation}) and a 
differentiability statement (Lemma~\ref{lem:diff}) that will be important for 
investigating the fine structure of the Fourier coefficient functions.

\begin{prop}\label{prop:periodization}
The map $F\colon\h\to V$, 
\[
 F(z) \coloneqq J^{-x} f(z)\qquad (z=x+\iu y\in\h)\,,
\]
is smooth and periodic with period~$1$.
\end{prop}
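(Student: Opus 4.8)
The plan is to verify the two asserted properties of $F$ separately: smoothness and $1$-periodicity. Smoothness is essentially immediate. The map $x \mapsto J^{-x}$ is continuous by Lemma~\ref{J_power_lemma}, and in fact real-analytic: from the explicit formula~\eqref{Jx1}, each matrix entry of $J^{-x}$ is a product of the real-analytic function $\lambda^{-x} = e^{-x\log(\lambda;\omega)}$ with a polynomial in $x$ (a generalized binomial coefficient $\binom{-x}{p}$). Since $f$ is smooth on $\h$ by hypothesis and $z = x+\iu y \mapsto x$ is smooth, the product $F(z) = J^{-x}f(z)$ is a matrix-valued-smooth-function times a $V$-valued smooth function, hence smooth as a map $\h\to V$.

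For the periodicity, I would compute directly using the two defining properties of $f$ and the homomorphism property of $x\mapsto J^x$. For $z = x+\iu y\in\h$ we have $z+1 = (x+1)+\iu y$, so
\[
 F(z+1) = J^{-(x+1)} f(z+1) = J^{-(x+1)} J f(z) = J^{-x-1}J f(z) = J^{-x} f(z) = F(z),
\]
where the second equality uses $f(z+1) = Jf(z)$, and the fourth uses that $x\mapsto J^x$ is a group homomorphism from $\R$ to $\Mat(d\times d;\C)$ (Lemma~\ref{J_power_lemma}), so that $J^{-x-1}J^{1} = J^{-x}$, equivalently $J^{-(x+1)}J = J^{(-x-1)+1} = J^{-x}$. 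This establishes $F(z+1) = F(z)$ for all $z\in\h$.

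There is no real obstacle here; the only point requiring a modicum of care is that $J^{-x}$ for $x\in\R$ must be interpreted via the continuous extension of $m\mapsto J^m$ provided by Lemma~\ref{J_power_lemma}, rather than any ad hoc definition, so that the homomorphism identity $J^{-(x+1)}J = J^{-x}$ is available. Since that lemma is already established and $J = J(\lambda)$ with $\lambda\neq 0$ is an invertible Jordan-ilk matrix (so $J^m$ for $m\in\Z$ and its continuous extension to $\R$ are all well-defined and invertible), this is exactly the setting of Lemma~\ref{J_power_lemma}, and the computation above goes through verbatim.
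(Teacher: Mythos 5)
Your proof is correct and follows essentially the same route as the paper's: periodicity via the group-homomorphism property of $x\mapsto J^x$ from Lemma~\ref{J_power_lemma}, and smoothness from the real-analyticity of that map (read off from the explicit formula~\eqref{Jx1}) together with the smoothness of $f$. The computation $F(z+1)=J^{-(x+1)}Jf(z)=J^{-x}f(z)=F(z)$ is identical to the one in the paper.
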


\begin{proof}
To establish the periodicity of~$F$, we recall from Lemma~\ref{J_power_lemma} 
that the map 
\begin{equation}\label{eq:inprop_J}
 \R\to\Mat(d\times d;\C)\,,\quad x\mapsto J^x\,,
\end{equation}
is a group homomorphism. For any $z=x+iy\in\h$ we find
\begin{align*}
 F(z+1) & = J^{-(x+1)}f(z+1) = J^{-x-1}Jf(z) = J^{-x}f(z) = F(z)\,. 
\end{align*}
The smoothness of~$F$ follows immediately from the smoothness of~$f$ and the 
real-analyticity of the map in~\eqref{eq:inprop_J} (see 
Lemma~\ref{J_power_lemma}).
\end{proof}

Since the map~$F$ defined in Proposition~\ref{prop:periodization} is smooth and 
periodic with period~$1$, we may expand it in a Fourier series:
\begin{equation}\label{eq:FourierF}
F(z) = J^{-x} f(z) = \sum_{n \in \Z} \hat f_n(y) e^{2\pi \iu n x}
\end{equation}
for all~$z=x+\iu y\in\h$, with suitable Fourier coefficient functions~$\hat 
f_n$, $n\in\Z$. Re-arranging, we obtain a Fourier expansion for~$f$, 
\begin{equation}\label{quasi_Fourier_expansion}
f(z) = \sum_{n \in \Z} J^{x} \hat f_n(y) e^{2 \pi \iu n x} \qquad (z=x+\iu y\in 
\h)\,.
\end{equation}

In order to find a more explicit description of the Fourier coefficient 
functions~$\hat f_n$, $n\in\Z$, we will differentiate the Fourier series 
in~\eqref{quasi_Fourier_expansion} term by term. See Section~\ref{s:system}. 
This will result in summands involving the functions
\[
\R\to\C\,,\quad x\mapsto x^k e^{2 \pi \iu  n x}\,,
\]
for $k \in \N_0 \cap [0, d]$ and $n\in\Z$. The polynomial terms are caused 
by the presence of~$J^{-x}$.  Whereas the functions in~$\{e^{2\pi \iu  n 
x}\}_{n \in \Z}$ are pairwise orthogonal in~$L^2 ([0,1])$, the functions 
in~$\{x^k e^{2\pi \iu  n x} \}_{n \in \Z, k \in \N_0\cap [0,d]}$ lose this 
property.  We therefore require the following lemma in order to push the study 
of the coefficient functions further.

\begin{lemma}\label{fourier_observation} 
Let $(P_n)_{n \in \Z}$ be a sequence of complex polynomials with uniformly 
bounded degrees, i.e., $\sup_{n \in \Z} \deg P_n < \infty$. Suppose 
that for all~$x \in \R$,
\begin{equation}\label{fourier_mod}
\sum_{n \in \Z} P_n(x)e^{2 \pi \iu n x} = 0\,.
\end{equation}
Then $P_n=0$ for all~$n \in \Z$ and, in particular, $(e^{2 \pi \iu n x} 
P_n(x))|_{x=0}=0$.
\end{lemma}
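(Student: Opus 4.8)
The plan is to reduce the statement to the case of constant polynomials by means of the finite-difference operator in the $x$-variable, and then to invoke the uniqueness of trigonometric series. Write $g(x) \coloneqq \sum_{n\in\Z} P_n(x) e^{2\pi \iu n x}$, so that $g\equiv 0$ by hypothesis. If all $P_n$ vanish there is nothing to prove; otherwise the uniform degree bound lets us set $D \coloneqq \sup_{n\in\Z}\deg P_n\in\N_0$, and I would argue by induction on $D$. For the inductive step, note that $e^{2\pi\iu n}=1$, so evaluating the hypothesis at $x+1$ gives $0 = g(x+1) = \sum_{n\in\Z} P_n(x+1)e^{2\pi\iu nx}$ for all $x\in\R$; subtracting this convergent series from the one defining $g(x)$ yields
\[
 \sum_{n\in\Z} \bigl(P_n(x+1) - P_n(x)\bigr) e^{2\pi\iu n x} = 0 \qquad (x\in\R)\,.
\]
The polynomials $\tilde P_n \coloneqq P_n(\,\cdot\,+1) - P_n$ have $\deg\tilde P_n \le D-1$ uniformly in $n$, so the induction hypothesis forces $\tilde P_n = 0$ for every $n$. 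Hence each $P_n$ is a $1$-periodic polynomial, and therefore constant, say $P_n\equiv c_n$; this reduces matters to the base case $D=0$.

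In the base case we have $\sum_{n\in\Z} c_n e^{2\pi\iu nx} = 0$ for all $x\in\R$, i.e.\ a trigonometric series converging to $0$ everywhere on $\R$, and the uniqueness theorem for trigonometric series then gives $c_n = 0$ for all $n$. (Whenever this lemma is applied the series in question is in fact absolutely convergent, the $P_n$ being assembled from rapidly decaying Fourier coefficients of a smooth function; in that situation one may instead integrate term by term against $e^{-2\pi\iu mx}$ over $[0,1]$ and read off $c_m = 0$ directly, avoiding the general uniqueness theorem.) Since $P_n = 0$ in particular gives $P_n(0)=0$, this also establishes the final assertion $(e^{2\pi\iu nx}P_n(x))|_{x=0}=0$.

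The only genuinely non-routine ingredient is the base case $D=0$: namely that pointwise convergence of $\sum_n c_n e^{2\pi\iu nx}$ to zero on all of $\R$ forces every $c_n$ to vanish. Everything else is elementary bookkeeping, and the one place where I would be careful is to emphasize that the termwise subtraction producing the displayed identity is legitimate precisely because the two series being subtracted each converge (to zero) by hypothesis, so no illicit interchange of limits is involved.
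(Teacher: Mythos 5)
Your proof is correct and follows essentially the same route as the paper's: induction on the maximal degree, with the finite-difference $x\mapsto x+1$ producing degree reduction because $e^{2\pi\iu n}=1$, and a base case of constant coefficients. The one place you are more explicit than the paper is the base case, where the paper merely invokes ``elementary Fourier analysis''; your remark that in the applications the coefficients decay rapidly (making the series absolutely convergent, so termwise integration against $e^{-2\pi\iu mx}$ suffices) pins down exactly why that invocation is harmless, without needing Cantor's uniqueness theorem in full generality.
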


\begin{proof} 
The second statement clearly follows from the first. We will prove the first 
statement by induction on $\max_{n\in\Z} \deg P_n \eqqcolon \mathfrak{D}$. We 
use the convention that the degree of the zero polynomial is $-1$. Let 
$\mathfrak{D} \leq 0$.  Then each polynomial $P_n$, $n\in\N$, is constant (also 
allowing the constant~$0$). Say, $P_n=p_n\in\C$ for $n\in\Z$. 
By~\eqref{fourier_mod} we have
\[
\sum_{n\in \Z} p_n e^{2 \pi \iu n x} = 0\qquad\text{for all~$x\in\R$}\,.
\]
Elementary Fourier analysis implies $p_n = 0$ for all~$n\in\Z$. Now we proceed 
to the induction step by assuming that the statement holds true for all 
polynomials of degree at most $\mathfrak{D}-1$ with $\mathfrak{D} > 0$. Let 
$(P_n)_{n\in \Z}$ be a sequence of polynomials of degree at most~$\mathfrak{D}$ 
which satisfies \eqref{fourier_mod}. For all $x \in \R$ we have therefore
\[ 
\sum_{n \in \Z} P_n (x) e^{2\pi \iu n x} = 0 \qquad\text{and}\qquad 
\sum_{n\in \Z} P_n (x+1) e^{2\pi \iu n (x+1)} = 0\,, 
\]
and hence (note that $e^{2\pi\iu n} = 1$)
\[ 
\sum_{n \in \Z} e^{2\pi \iu n x} \left( P_n (x+1) - P_n (x) \right) = 0\,.
\]
Since $\mathfrak D > 0$, $P_n(x+1) - P_n (x)$ is a polynomial of degree at most 
$\mathfrak D - 1$ for all~$n\in\Z$. Thus, the inductive hypothesis implies that 
$P_n(x+1) - P_n (x) = 0$ for all~$x\in\R$ and all~$n\in\Z$.  Consequently, each 
polynomial~$P_n$, $n\in\Z$, is constant.  By the base case of induction, $P_n = 
0$ for all $n  \in \Z$.  
\end{proof} 

We end this section by showing a differentiability property of the series 
in~\eqref{quasi_Fourier_expansion}.

\begin{lemma}\label{lem:diff}
The series in~\eqref{quasi_Fourier_expansion} may be differentiated infinitely 
often term by term both in~$x$ and in~$y$.  
\end{lemma}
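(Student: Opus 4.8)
The plan is to reduce the whole statement to the smoothness of the periodization~$F$ from Proposition~\ref{prop:periodization}, combined with the classical theorem on term-by-term differentiation of a convergent series whose termwise-derived series converges locally uniformly. Recall that the Fourier coefficient functions in~\eqref{quasi_Fourier_expansion} are $\hat f_n(y)=\int_0^1 F(x+\iu y)e^{-2\pi\iu n x}\,dx$ and that $F$ is smooth and $1$-periodic in~$x$.

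\emph{Step 1: rapid decay of $\pa_y^b\hat f_n$, locally uniformly in~$y$.} I would first show that for every $b,k\in\N_0$ and every compact interval $I\subset\R_{>0}$ there is a constant $M=M(b,k,I)$ with $\|\pa_y^b\hat f_n(y)\|\le M(1+|n|)^{-k}$ for all $n\in\Z$ and $y\in I$. By differentiation under the integral sign (legitimate because $\pa_y^{b+1}F$ is continuous, hence locally bounded) one gets $\pa_y^b\hat f_n(y)=\int_0^1\pa_y^b F(x+\iu y)e^{-2\pi\iu n x}\,dx$; then, for $n\neq0$, integrating by parts $k$ times in~$x$ and using that $\pa_x^\ell\pa_y^b F$ is $1$-periodic in~$x$ (so all boundary terms cancel) yields
\[
 \pa_y^b\hat f_n(y)=\frac1{(2\pi\iu n)^k}\int_0^1\pa_x^k\pa_y^b F(x+\iu y)\,e^{-2\pi\iu n x}\,dx\,,
\]
whence $\|\pa_y^b\hat f_n(y)\|\le(2\pi|n|)^{-k}\sup_{[0,1]\times I}\|\pa_x^k\pa_y^b F\|$, the supremum being finite by compactness; the $n=0$ term is absorbed into the constant.

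\emph{Step 2: uniform convergence of the differentiated series on compacta.} Fixing a compact rectangle $K=I_x\times I_y\subset\R\times\R_{>0}$ and a pair $(a,b)\in\N_0^2$, I would differentiate the general term of~\eqref{quasi_Fourier_expansion} via the Leibniz rule:
\[
 \pa_x^a\pa_y^b\bigl(J^x\hat f_n(y)e^{2\pi\iu n x}\bigr)=\sum_{j=0}^{a}\binom{a}{j}(2\pi\iu n)^{a-j}\bigl(\pa_x^j J^x\bigr)\bigl(\pa_y^b\hat f_n(y)\bigr)e^{2\pi\iu n x}\,.
\]
By~\eqref{Jx1} the entries of $\pa_x^j J^x$ are polynomials in~$x$ times $e^{x\log(\lambda;\omega)}$, so $\|\pa_x^j J^x\|$ is bounded on~$I_x$; together with $|e^{2\pi\iu n x}|=1$, $|(2\pi\iu n)^{a-j}|\le(2\pi|n|)^a$, and Step~1 applied with $k=a+2$, each summand is bounded in norm on~$K$ by a constant times $(1+|n|)^{-2}$. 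Since $\sum_{n\in\Z}(1+|n|)^{-2}<\infty$, the series obtained from~\eqref{quasi_Fourier_expansion} by applying $\pa_x^a\pa_y^b$ term by term converges absolutely and uniformly on~$K$.

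\emph{Step 3: conclusion.} Since~\eqref{quasi_Fourier_expansion} converges to~$f$ (locally uniformly, being the case $(a,b)=(0,0)$ of Step~2) and every termwise partial-derivative series converges locally uniformly on~$\h$ by Step~2, an induction on $a+b$, applying the classical differentiation theorem one variable and one order at a time, identifies the termwise $\pa_x^a\pa_y^b$-differentiated series with $\pa_x^a\pa_y^b f$ for all $(a,b)\in\N_0^2$ — which is exactly the assertion of the lemma. I do not expect any serious obstacle here; the only point needing care is the \emph{local} (rather than merely pointwise) uniformity in~$y$ in Step~1, which is handled by taking the suprema over the compact sets $[0,1]\times I$ and is precisely what makes the derived series in Step~2 converge uniformly on compact subsets of~$\h$.
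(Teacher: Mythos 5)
Your argument is correct and follows essentially the same route as the paper's: write $\hat f_n(y)=\int_0^1 F(x+\iu y)e^{-2\pi\iu n x}\,dx$, integrate by parts repeatedly in~$x$ to obtain rapid, locally uniform decay of $\pa_y^b\hat f_n$, note that the entries of $J^x$ and its $x$-derivatives are locally bounded, and conclude via local uniform convergence of the termwise-differentiated series. The only cosmetic difference is that the paper handles the $x$-direction by directly citing the classical termwise-differentiation theorem for Fourier series of smooth periodic functions, whereas you fold the $x$-derivatives into the same Leibniz-rule estimate as the $y$-derivatives; both are valid and the underlying idea is identical.
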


\begin{proof}
We recall the map~$F$ from Proposition~\ref{prop:periodization}, 
\[
 F(z) = J^{-x}f(z)\qquad (z=x+\iu y\in\h)\,.
\]
To prove that the series in~\eqref{quasi_Fourier_expansion} has the claimed 
differentiability properties, it suffices to establish them for the Fourier 
series of~$F$, given in~\eqref{eq:FourierF}. For every fixed $y>0$, the map 
\[
 \R\to V\,,\quad x\to F(x+\iu y)\,,
\]
is smooth by Proposition~\ref{prop:periodization}, and the series 
in~\eqref{eq:FourierF} is its Fourier series (with the functions~$\hat f_n$ 
evaluated at the fixed value of~$y$). Therefore the derivative of this Fourier 
series (in~$x$) is given by differentiating it termwise in~$x$. 
See \cite[Theorem~2.3]{folland2009fourier}. Iteration of this argument proves 
the claimed differentiability properties in~$x$.

For differentiation in~$y$ we use the Leibniz integral rule.  For $k, 
m\in\N_0$, $n \in \Z$ and~$y>0$ we have
\begin{align*}
\hat f_n(y) & = \int_0^1 F(x+\iu y) e^{- 2 \pi \iu  n x} dx 
\intertext{and}
(2 \pi n)^{4m} \pa_y ^k \hat f_n(y) &=\int_0^1 \pa_y ^k  F(x + \iu y) \cdot (2 
\pi n)^{4m} e^{- 2 \pi \iu nx}  dx \\
& =\int_0^1 \pa_y ^k  F(x+ \iu y) \cdot \pa_x ^{4m} e^{- 2 \pi \iu nx}  dx\,.
\end{align*}
The exchange of differentiation and integration in the second equation is 
justified by smoothness of the integrand. Integrating by parts together with 
the fact that $F$ is $1$-periodic shows that the boundary terms vanish. 
Therefore we obtain
\begin{align*}
(2 \pi n)^{4m} \pa_y ^k \hat f_n(y) &= \int_0^1 \pa_x ^{4m} \pa_y ^{k} F(x+\iu 
y)  \cdot e^{-2 \pi \iu nx} dx\,.
\end{align*}
Hence for any compact set $K$ in $\h$, there exists $C_{k, K, m}>0$ such that 
for any $z=x+\iu y \in K$,
\begin{align*}
\left| \pa_y ^k \hat f_n(y) \right| \leq  C_{k, K, m}  n^{-4m}\,.
\end{align*}
Consequently, for any $z = x + \iu y \in K$,
\begin{align*}
\left| e^{2\pi \iu  n x} J^{-x} \pa_y ^k  \hat f_n(y)\right| \leq C\cdot \left| 
x^{d } \pa_y ^k \hat f_n(y) \right| \le  \tilde C_{k, K, m} |x^d| n^{-4m}
\end{align*}
for appropriate positive constants $C$ and $\tilde C_{k,K,m}>0$, the latter 
potentially depending on~$k,K,m$.
We recall that $F(z) = J^{-x}f(z)$. Hence, for sufficiently large $m$, the 
$k$-th termwise derivative of~\eqref{eq:FourierF} in~$y$ converges absolutely 
and uniformly on compact subsets of~$\h$. Applying the Leibniz integral rule 
completes the proof.
\end{proof}

\section{Differential equations for the Fourier coefficient 
functions}\label{s:system}

Starting with this section we add to the hypotheses from 
Section~\ref{sec:periodization} that $f$ be a Laplace eigenfunction. Thus, in 
total we suppose that $V$ is a complex vector space of dimension~$d\in\N$ which 
we identify throughout with~$\C^d$, and $A$ is an endomorphism of~$V$ that is 
represented by the Jordan-like block matrix~$J = J(\lambda)$ with 
$\lambda\in\C$, 
$\lambda\not=0$. Further $f\colon\h\to V$ is $J$-twist periodic (or, 
equivalently, $A$-twist periodic), that is 
\[
 f(z+1) = Jf(z) \qquad\text{for all~$z\in\h$}\,,
\]
and moreover, $f$ is a Laplace eigenfunction
\begin{equation}\label{eq:laplaceeigen_sec5}
 \Delta f = s(1-s)f\,.
\end{equation}
Unless the Laplace eigenvalue~$s(1-s)$ of~$f$ is $1/4$, there are two choices 
for the spectral parameter~$s$ of~$f$. In this section, all considerations will 
be independent of this choice as all results only involve the 
eigenvalue~$s(1-s)$, not the spectral parameter. Nevertheless, we will express 
the eigenvalue using the spectral parameter to keep the necessary notation to a 
minimum.
We fix a complex logarithm by choosing~$\omega\in\C$ with the properties as in 
Section~\ref{sec:cut} and recall from~\eqref{quasi_Fourier_expansion} that~$f$ 
admits a Fourier expansion of the form 
\begin{equation}\label{eq:fourier_sec5}
 f(z) = \sum_{n\in\Z} J^x \hat f_n(y) e^{2\pi \iu n x} \qquad (z=x+\iu 
y\in\h)\,.
\end{equation}
In this section we will show that just as in the scalar case, each of the 
Fourier coefficient functions, $\hat f_n$, satisfies a certain differential 
equation, depending on~$n\in\Z$. In contrast to the scalar case, the 
differential equation is vector-valued, so indeed a system of differential 
equations.   

\begin{lemma}\label{descent_to_a_system} 
The Fourier coefficient function $\hat f_n$, $n\in\Z$, satisfies the system of 
second order differential equations (on $\R_{>0}$)
\begin{equation}\label{d_entry_matrix}
\left( y^{2}\pa_y ^2 + s(1-s) +  \left. y^{2}  \pa_x ^2 \right|_{x = 0} (e^{2 
\pi \iu  n x } J^x)\right) \hat f_n(y)
= 0\,.
\end{equation}
\end{lemma}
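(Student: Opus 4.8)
The plan is to substitute the Fourier expansion~\eqref{eq:fourier_sec5} into the eigenvalue equation~\eqref{eq:laplaceeigen_sec5}, differentiate term by term (which is legitimate by Lemma~\ref{lem:diff}), and then isolate the contribution of a single frequency~$n$ by appealing to Lemma~\ref{fourier_observation}. Concretely, write each summand of~\eqref{eq:fourier_sec5} as $g_n(z) \coloneqq J^x \hat f_n(y) e^{2\pi\iu n x}$. Applying $\Delta = -y^2(\pa_x^2 + \pa_y^2)$ to $g_n$ and using that $\pa_y$ only acts on $\hat f_n(y)$ while $\pa_x$ acts on $x\mapsto J^x e^{2\pi\iu n x}$, I would compute
\[
 \Delta g_n(z) = -y^2\Bigl( \pa_x^2\bigl(J^x e^{2\pi\iu n x}\bigr)\,\hat f_n(y) + J^x e^{2\pi\iu n x}\,\hat f_n''(y)\Bigr).
\]
The eigenvalue equation then reads $\sum_{n\in\Z}\bigl(\Delta g_n - s(1-s)g_n\bigr) = 0$, i.e.\ for all $z=x+\iu y$,
\[
 \sum_{n\in\Z} e^{2\pi\iu n x}\Bigl( -y^2 \pa_x^2\bigl(J^x e^{2\pi\iu n x}\bigr)e^{-2\pi\iu n x}\,\hat f_n(y) - y^2 J^x \hat f_n''(y) - s(1-s) J^x \hat f_n(y)\Bigr) = 0.
\]

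The key observation is that, by Lemma~\ref{J_power_lemma} together with~\eqref{Jx1}, the matrix-valued function $x\mapsto J^x e^{2\pi\iu n x}$ has entries of the form (polynomial in~$x$ of degree $\le d-1$) times $\lambda^x e^{2\pi\iu n x} = e^{x(\log(\lambda;\omega) + 2\pi\iu n)}$; hence its second $x$-derivative, after factoring out $e^{2\pi\iu n x}$, is again (polynomial in~$x$) times $e^{x\log(\lambda;\omega)}$, so the whole $n$-th bracket above, multiplied by $e^{2\pi\iu n x}$, is a polynomial in~$x$ of uniformly bounded degree times $e^{2\pi\iu n x}$ — after absorbing the $\lambda$-dependent exponential into $J^x$ itself, which is exactly the shape required by Lemma~\ref{fourier_observation}. (One should be slightly careful: $J^x \hat f_n''(y)$ and $s(1-s) J^x \hat f_n(y)$ are themselves polynomial-in-$x$ multiples of $\lambda^x$, so the whole expression genuinely fits the framework; the uniform degree bound is $d-1$ throughout.) Lemma~\ref{fourier_observation}, or rather its conclusion that each coefficient polynomial vanishes identically and in particular vanishes at $x=0$, then forces, for every fixed $n\in\Z$,
\[
 \Bigl( -y^2 \pa_x^2\bigl(e^{2\pi\iu n x} J^x\bigr)\Big|_{x=0}\Bigr)\hat f_n(y) - y^2 J^0 \hat f_n''(y) - s(1-s) J^0 \hat f_n(y) = 0
\]
for all $y>0$, where I have evaluated at $x=0$; since $J^0 = \id$, rearranging and multiplying by $-1$ gives exactly~\eqref{d_entry_matrix}.

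The main obstacle is the bookkeeping needed to apply Lemma~\ref{fourier_observation} correctly: one must verify that, once the common exponential factor $\lambda^x$ is folded into the definition of $J^x$, what remains multiplying each $e^{2\pi\iu n x}$ really is an honest polynomial in~$x$ with degree bounded independently of~$n$, and that termwise differentiation of~\eqref{eq:fourier_sec5} in both variables is valid — the latter is precisely Lemma~\ref{lem:diff}, so it can be cited. A minor additional point is that evaluating the polynomial coefficient at $x=0$ rather than asserting it vanishes identically is all that is needed, and that $\pa_x^2(e^{2\pi\iu n x}J^x)|_{x=0}$ is the operator appearing in the statement; no further simplification of this operator is attempted here (that is deferred to the subsequent base change). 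Everything else is a routine product-rule computation.
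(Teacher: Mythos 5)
Your proposal is correct and follows essentially the same route as the paper's proof: apply $\Delta$ term-by-term using Lemma~\ref{lem:diff}, observe (via Lemma~\ref{J_power_lemma}) that after dividing by the common nonvanishing factor $\lambda^x$ the coefficient of each $e^{2\pi\iu n x}$ is a polynomial in $x$ of degree at most $d-1$, apply Lemma~\ref{fourier_observation} entrywise to force each such polynomial to vanish, and evaluate at $x=0$ using $J^0=\id$. The only cosmetic difference is that the paper carries out the expansion of $\partial_x^2(e^{2\pi\iu nx}J^x)$ in terms of $J(1)^x$ and its derivatives explicitly, whereas you argue qualitatively that the structure is polynomial times exponential; both are valid.
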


\begin{proof}
To evaluate the action of the Laplacian~$\Delta$ on the function~$f$ we take 
advantage of the Fourier expansion~\eqref{eq:fourier_sec5} of~$f$ and may 
pull the Laplace operator~$\Delta$ onto each summand by Lemma~\ref{lem:diff}. 
From this and the property of~$f$ to be a Laplace eigenfunction with spectral 
parameter~$s$ (see~\eqref{eq:laplaceeigen_sec5}) we obtain 
\begin{equation}\label{eq:systemdiffeq}
\begin{aligned}
0 & = \bigl( s(1-s) - \Delta\bigr)f(z) 
\\
& = \sum_{n \in \Z} \bigl( s(1-s) e^{2\pi \iu  n x}J^x + y^{2} \partial^2_{x} 
\left(e^{2\pi \iu  n x} J^x\right) + y^{2} e^{2\pi \iu  n x} J^x 
\partial^2_{y}\bigr) \hat f_n(y)
\end{aligned}
\end{equation}
for all $z\in\h$. We consider~\eqref{eq:systemdiffeq} as a system of 
differential equations in the variable~$y$ on $\R_{>0}$ with parameter $x\in\R$, 
to which $\hat f_n$ is an ($x$-independent) joint solution for all values 
of~$x$. To evaluate this system of differential equations further, we recall 
from Sections~\ref{sec:cut} and~\ref{sec:jordanblocks} that 
\[
 J^x = J(\lambda)^x = \lambda^x J(1)^x
\]
for all~$x\in\R$, and that $\lambda^x = \exp\bigl(x\log(\lambda;\omega)\bigr)$. 
By direct calculation we find 
\begin{align*}
\partial^2_{x} (e^{2\pi \iu  n x} J^x) & =  e^{2\pi \iu  n x} \left( (2 \pi \iu 
n)^2 J^x + 4 \pi \iu  n \partial_x J^x + \partial_x^2 J^x\right) 
\\
& = \lambda^x e^{2\pi \iu  n x} \big[  (2 \pi \iu n)^2  J(1)^x  + 4 \pi \iu n 
\bigl(\log(\lambda;\omega)  J(1)^x +  \partial_{x} J(1) ^x\bigr)  
\nonumber  \\& \phantom{\lambda  e^{2\pi \iu  n x} \times \left(   \right.}
+  \log(\lambda;\omega)^2  J(1)^x +  \partial^2_{x} J(1)^x + 2 
\log(\lambda;\omega)  \partial_{x} J(1)^x \big]\,. 
\end{align*}
The system of differential equations~\eqref{eq:systemdiffeq} is therefore
\begin{equation}\label{eq:sd_first}
\begin{aligned}
0  & = \lambda^x \sum_{n \in \Z} e^{2\pi \iu  n x} \Big(  s(1-s) J(1)^x + y^2 
\Big[ (2\pi \iu n)^2 J(1)^x   
\\
&\qquad + \log(\lambda;\omega)^2 J(1)^x + \partial^2_{x} J(1)^x + 2 \log(\lambda;\omega) \partial_{x} J(1)^x  
\\
&\qquad + 
4 
\pi \iu  n \log(\lambda;\omega) J(1)^x+ 4 \pi \iu  n \pa_x J(1)^x + J(1)^x \pa_y ^2 \Big] \Big) \hat f_n(y)\,.
\end{aligned}
\end{equation}
We note that $\lambda$, $s$ and~$\omega$ are fixed, and in particular are 
independent of $x$. Further, we recall from Lemma~\ref{J_power_lemma} that each 
of the entries of~$J(1)^x$ is polynomial in~$x$. Therefore for each fixed $y 
\in 
\R_{>0}$, the right hand side of~\eqref{eq:sd_first} is of the form 
\[
\lambda^x \sum_{n \in \Z} e^{2\pi \iu  n x} P_n(x) = 0\,,
\]
where, for each~$n \in \Z$, the function~$P_n$ is a polynomial in~$x$, and its 
degree is bounded by the dimension, $d$, of the vector space~$V$. Since 
$\lambda^x$ does not vanish, we obtain for each~$n\in\Z$ that
\begin{equation}\label{eq:specialpolvanish}
0 = \left( e^{2 \pi \iu nx} P_n(x) \right)\!\big|_{x=0}
\end{equation}
by Lemma~\ref{fourier_observation}. Comparing~\eqref{eq:sd_first} 
to~\eqref{eq:systemdiffeq} and using~\eqref{eq:specialpolvanish} shows that for 
all $n \in \Z$ and $y \in \R_{>0}$ we have 
\[ 
\Big[s(1-s)  \left. e^{2 \pi \iu  n x}  J^x \right|_{x=0} + y^2  \Big( \left. 
\partial^2_{x}\right|_{x=0}  (e^{2 \pi \iu  n x} J^x) +  \left.e^{2 \pi \iu  n 
x} J^x \right|_{x=0} \partial^2_{y}  \Big)\Big]\hat f_n(y) =0\,.
\]  
Now $\left. e^{2 \pi \iu  n x} J^x \right|_{x=0} = \id$, the identity matrix, 
which completes the proof.  
\end{proof} 

The system of the differential equations for the Fourier coefficients in 
Lemma~\ref{descent_to_a_system} involves an upper triangular matrix.  
Consequently, this system is quite large. In the remainder of this section we 
will deduce an equivalent system of differential equations by means of a base 
change such that the upper triangular matrix is transformed into a three-band 
matrix. 
To obtain this simpler system we require the following two technical lemmas 
regarding the Stirling matrix~$\stirmat$, which was defined 
in~\eqref{def:stirlingmatrix}.

\begin{lemma}\label{propSt1}
Let $\stirmat$ be the Stirling matrix from~\eqref{def:stirlingmatrix}. For each $x\in\R$ we have
\[
\stirmat \begin{pmatrix} x^{d-1} \\[1mm] x^{d-2} \\ \vdots \\ x \\[1mm] 1 
\end{pmatrix} 
= \begin{pmatrix} \binom{x}{d-1} \\[1mm] \binom{x}{d-2} \\ \vdots \\ 
\binom{x}{1} \\[1mm] \binom{x}{0} \end{pmatrix}\,.
\]
\end{lemma}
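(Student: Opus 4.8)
The plan is to compute the $i$-th entry of the left-hand side directly from the definition~\eqref{def:stirlingmatrix} of the Stirling matrix and then recognize it via the polynomial characterization~\eqref{eq:charstirling} of the signed Stirling numbers of the first kind together with the definition~\eqref{defbixp} of the generalized binomial coefficients. Write the vector on the left as $v = (v_1,\dots,v_d)^{\mathsf T}$ with $v_j = x^{d-j}$, so that the $i$-th component of $Tv$ equals $\sum_{j=1}^d t_{ij}\, x^{d-j} = \frac{1}{(d-i)!}\sum_{j=1}^d s(d-i,d-j)\, x^{d-j}$.

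First I would substitute $m \coloneqq d-i$ (so that $m$ runs over $\{0,1,\dots,d-1\}$ as $i$ runs over $\{d,d-1,\dots,1\}$, and the target entry is $\binom{x}{d-i} = \binom{x}{m}$) and $k \coloneqq d-j$ (so that $k$ runs over $\{0,1,\dots,d-1\}$ as $j$ runs over $\{d,d-1,\dots,1\}$). Then the $i$-th component becomes
\[
 \frac{1}{m!}\sum_{k=0}^{d-1} s(m,k)\, x^k\,.
\]
Next I would invoke the vanishing $s(m,k)=0$ for $k>m$ recorded right after~\eqref{eq:charstirling}, which holds here since $m = d-i \le d-1$. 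This lets me replace the upper summation limit $d-1$ by $m$ without changing the sum, so the $i$-th component equals $\frac{1}{m!}\sum_{k=0}^{m} s(m,k)\, x^k$.

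Finally, by the characterization~\eqref{eq:charstirling} we have $\sum_{k=0}^{m} s(m,k)\, x^k = \prod_{\ell=0}^{m-1}(x-\ell)$, and dividing by $m!$ gives exactly $\binom{x}{m}$ by~\eqref{defbixp}. Tracing back the substitution, the $i$-th component of $Tv$ is $\binom{x}{d-i}$ for each $i\in\{1,\dots,d\}$, which is precisely the claimed vector $(\binom{x}{d-1},\dots,\binom{x}{1},\binom{x}{0})^{\mathsf T}$. There is no genuine obstacle here: the only thing to be careful about is keeping the two index reversals ($i\mapsto d-i$ and $j\mapsto d-j$) straight and justifying the truncation of the $k$-sum via $s(m,k)=0$ for $k>m$; everything else is a direct appeal to the definitions already in place.
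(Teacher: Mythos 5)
Your proof is correct and follows essentially the same route as the paper's: compute the $i$-th entry, use the vanishing of $s(n,k)$ for $k>n$ to truncate the sum, and then apply the polynomial characterization~\eqref{eq:charstirling} together with~\eqref{defbixp}. The only cosmetic difference is that you substitute indices first and then truncate, whereas the paper truncates the $j$-sum first and then substitutes; the logical content is identical.
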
 

\begin{proof} 
Using the very definition of the Stirling matrix~$\stirmat = (\stirelem_{ij})$, we see that the entry in the~$i^{th}$~row of the product on the left hand side of the claimed formula is 
\[
\sum_{j=1}^d \stirelem_{ij}x^{d-j} = \sum_{j=i} ^d \frac{s(d-i, d-j)}{(d-i)!} x^{d-j} = \sum_{k=0} ^{d-i} \frac{s(d-i,k)x^k}{(d-i)!}\,.
\]
By the characterization of the Stirling numbers in~\eqref{eq:charstirling} and the very definition of the generalized binomial coefficients we have
\[  
\sum_{k=0} ^{d-i} \frac{ s(d-i, k) x^k}{(d-i)!} = \frac{1}{(d-i)!} 
\prod_{\ell=0} ^{d-i-1} (x-\ell) = \binom{x}{d-i}\,. 
\]
This completes the proof.
\end{proof} 

A simplification of the system of differential equations from 
Lemma~\ref{descent_to_a_system} will be obtained by conjugating the 
matrix~$J^x$ with the Stirling matrix~$\stirmat$. We provide an explicit expression of the matrix~$\stirmat^{-1} J^x \stirmat$ in the following lemma. For each~$x\in\R$ we let $C(x)=(c_{ij}(x))$ be the $(d\times d)$-matrix with entries 
\begin{equation}\label{eq:def_Cmatrix}
c_{ij} (x) \coloneqq 
\begin{cases}
{d-i \choose j-i} x^{j-i} \quad &\text{if $i\leq j$\,,}
\\
0 & \text{if $i>j$\,.}
\end{cases}
\end{equation}

\begin{lemma}\label{Cmatrix} 
For each~$x\in\R$, we have $\stirmat^{-1}J^x \stirmat = \lambda^x C(x)$.
\end{lemma}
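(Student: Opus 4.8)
The plan is to strip off the scalar factor $\lambda^x$ and then establish the resulting matrix identity by evaluating both sides on a family of vectors that spans $\C^d$ and on which the Stirling matrix $T$ acts in the explicit way recorded in Lemma~\ref{propSt1}.

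First I would recall, using Lemma~\ref{J_power_lemma} with $\mu=\lambda$ and with $\mu=1$, that $J^x=J(\lambda)^x=\lambda^x J(1)^x$, where $J(1)^x=M(x)$ denotes the upper triangular matrix with entries $M(x)_{ij}=\binom{x}{j-i}$ for $i\le j$ and $M(x)_{ij}=0$ for $i>j$. Since $T$ is upper triangular with diagonal entries $t_{ii}=1/(d-i)!\neq 0$, it is invertible, and the claimed identity $T^{-1}J^xT=\lambda^x C(x)$ is equivalent to $M(x)\,T=T\,C(x)$ for all $x\in\R$. This is the identity I would prove.

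For $y\in\R$ I would introduce the column vectors
\[
 v(y) \coloneqq \begin{pmatrix} y^{d-1}\\ \vdots\\ y\\ 1\end{pmatrix}
 \qquad\text{and}\qquad
 b(y) \coloneqq \begin{pmatrix} \binom{y}{d-1}\\ \vdots\\ \binom{y}{1}\\ \binom{y}{0}\end{pmatrix},
\]
so that Lemma~\ref{propSt1} states precisely $T\,v(y)=b(y)$. Now I would evaluate both sides of $M(x)\,T=T\,C(x)$ on $v(y)$. On the left, $M(x)\,T\,v(y)=M(x)\,b(y)$, and the $i$-th entry of $M(x)\,b(y)$ equals $\sum_{k=0}^{d-i}\binom{x}{k}\binom{y}{d-i-k}=\binom{x+y}{d-i}$ by the Chu--Vandermonde identity; hence $M(x)\,T\,v(y)=b(x+y)$. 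On the right, the $i$-th entry of $C(x)\,v(y)$ equals $\sum_{m=0}^{d-i}\binom{d-i}{m}x^{m}y^{d-i-m}=(x+y)^{d-i}$ by the binomial theorem, so $C(x)\,v(y)=v(x+y)$, and therefore $T\,C(x)\,v(y)=T\,v(x+y)=b(x+y)$, again by Lemma~\ref{propSt1}. Thus $M(x)\,T$ and $T\,C(x)$ agree on $v(y)$ for every $y\in\R$.

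Finally I would observe that the vectors $\{v(y):y\in\R\}$ span $\C^d$: a row vector $w$ with $w\,v(y)=0$ for all $y$ would yield a polynomial $\sum_{i=1}^{d} w_i\,y^{d-i}$ of degree at most $d-1$ that vanishes identically, forcing $w=0$ (equivalently, $v(0),v(1),\dots,v(d-1)$ form an invertible reversed Vandermonde system). Hence $M(x)\,T=T\,C(x)$, and multiplying by $\lambda^x$ and conjugating by $T$ gives $T^{-1}J^xT=\lambda^x T^{-1}J(1)^xT=\lambda^x C(x)$, as claimed. The argument is essentially routine; the only delicate point is the index bookkeeping --- the rows of $T$, $M(x)$ and $C(x)$ are indexed so that the ``binomial degree'' runs from $d-1$ down to $0$, which must be matched with the ordering in Lemma~\ref{propSt1} and in the definitions \eqref{def:stirlingmatrix} and \eqref{eq:def_Cmatrix}. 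I do not expect any genuine obstacle beyond this.
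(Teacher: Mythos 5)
Your proof is correct, and it takes a genuinely different route from the paper's. The paper proves $J(1)^xT = TC(x)$ by a direct entry-wise comparison: it expands the $(i,j)$-entry of each side, collects powers of $x$, and reduces the claim to a Stirling-number convolution identity
\[
\frac{s(a,b+q)}{a!}\binom{b+q}{q} = \sum_{\ell=b}^{a-q}\frac{s(\ell,b)\,s(a-\ell,q)}{\ell!\,(a-\ell)!}\,,
\]
which it then cites from Abramowitz--Stegun. Your approach instead evaluates both sides of the (equivalent) identity $M(x)T = TC(x)$ on the spanning family $v(y)=(y^{d-1},\dots,y,1)^{\!\top}$: Lemma~\ref{propSt1} gives $Tv(y)=b(y)$, Chu--Vandermonde gives $M(x)b(y)=b(x+y)$, the binomial theorem gives $C(x)v(y)=v(x+y)$, and a Vandermonde argument upgrades agreement on $\{v(y)\}$ to equality of matrices. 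This is cleaner in two respects: it needs no input beyond Chu--Vandermonde (already used in Lemma~\ref{J_power_lemma}) and the binomial theorem, so it avoids the external Stirling identity altogether; and it makes the conceptual content visible --- $T$ is exactly the change of basis from the monomial ``moment vectors'' $v(y)$ to the binomial ``moment vectors'' $b(y)$, and both $C(x)$ and $M(x)$ implement translation by $x$ in the respective coordinates, so the lemma is the resulting intertwining relation. The paper's calculation is more mechanical but produces the entry-wise formula directly, whereas yours produces the matrix identity first and lets the entry-wise formula follow if needed. Both are sound; yours is the more self-contained of the two.
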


\begin{proof} Since $J^x=\lambda^x J(1)^x$, it suffices to show that 
$\stirmat^{-1}J(1)^x \stirmat = C(x)$ for all~$x\in\R$. To that end we let $x\in\R$ and set 
\[
M\coloneqq M(x)\coloneqq J(1)^x\stirmat\quad\text{and}\quad N \coloneqq N(x)\coloneqq \stirmat C(x)\,.
\]
As usual we denote the entries of all matrices by corresponding small letters, and we omit their dependence on~$x$ to simplify notation. Thus, $M=(m_{ij})$, $N=(n_{ij})$ and $C=C(x) = (c_{ij})$. Since $\stirmat$, $J(1)$ and~$C$ are upper triangular matrices, so are $M$ and~$N$. To show that $M$ and $N$ coincide on the upper triangle, we let $i,j\in\{1,\ldots, d\}$ with $i\leq j$. Then 
\begin{align*}
n_{ij} & = \sum_{k=1}^d \stirelem_{ik}c_{kj} = \sum_{k=i}^j \stirelem_{ik}c_{kj}
\\
& = \sum_{k=i}^j \frac{s(d-i,d-k)}{(d-i)!} \binom{d-k}{j-k} x^{j-k}
\\
& = \sum_{q=0}^{j-i} x^q \frac{s(d-i,d-j+q)}{(d-i)!} \binom{d-j+q}{q}\,,
\end{align*}
and
\begin{align*}
m_{ij} & = \sum_{k=i}^j \frac{s(d-k,d-j)}{(d-k)!}\binom{x}{k-i} 
\\
& \stackrel{(*)}{=} \sum_{k=i}^j \frac{s(d-k,d-j)}{(d-k)!} 
\frac{1}{(k-i)!}\sum_{\ell=0}^{k-i} s(k-i,\ell)x^\ell
\\
& = \sum_{q=0}^{j-i} x^q \sum_{k=i+q}^j \frac{s(d-k,d-j) s(k-i,q)}{(d-k)!(k-i)!}
\\
& = \sum_{q=0}^{j-i} x^q \sum_{\ell = d-j}^{d-i-q} \frac{s(\ell, 
d-j)s(d-i-\ell,q)}{\ell! (d-i-\ell)!}\,,
\end{align*}
where we took advantage of Lemma~\ref{propSt1} for the step~$(*)$. It remains 
to show that for all $i,j\nobreak\in\nobreak\{1,\ldots, d\}$ with $i\leq j$ and all $q\in\{0,\ldots, j-i\}$ we have
\begin{equation}\label{preredform} 
\frac{s(d-i,d-j+q)}{(d-i)!} \binom{d-j+q}{q} =  \sum_{\ell=d-j} ^{d-i-q} 
\frac{s(\ell, d-j) s(d-i-\ell, q)}{\ell! (d-i-\ell)!}\,.
\end{equation}  
If we apply the substitutions $a = d-i$ and $b = d-j$, then \eqref{preredform} 
becomes
\[
\frac{s(a,b+q)}{a!} \binom{b+q}{q} = \sum_{\ell=b}^{a-q} 
\frac{s(\ell,b)s(a-\ell,q)}{\ell!(a-\ell)!}\,,
\]
which is valid by~\cite[ 24.1.3 II.A]{AS2008}. This completes the proof.
\end{proof}

With these preparations we can now show that the system of differential 
equations from Lemma~\ref{descent_to_a_system} can be reformulated as a system of differential equations in which the interactions of the single differential equations are determined by a certain three-band matrix and hence are simpler than in the system from Lemma~\ref{descent_to_a_system}. For $\alpha\in\C$ we let $H(\alpha) = (h_{ij}(\alpha))$ denote the $(d\times d)$-matrix with entries
\begin{equation}\label{eq:def_H}
 h_{ij} (\alpha) \coloneqq -2\alpha \binom{d-i}{1} \delta_0(j-i-1) - 2 
\binom{d-i}{2} \delta_0(j-i-2)
\end{equation}
for $i,j\in\{1,\ldots, d\}$, where $\delta_0$ denotes the Kronecker delta 
function~\eqref{eq:kronecker}. 
The matrix~$H(\alpha)$ roughly looks like
\begin{align*}
 & H(\alpha) = \left( h_{ij}(\alpha) \right)_{i,j=1}^d
 \\[1mm]
 & = 
 {\small 
 \begin{pmatrix}
  0 & -2\alpha(d-1) & -(d-1)(d-2) & 0 & 0 &  \ldots & 0
  \\
  0 & 0 & -2\alpha(d-2) & -(d-2)(d-3) & 0 & \ldots & 0
  \\
  \vdots & \vdots & \vdots  &  & \vdots &  & \vdots
  \\
  0 & 0  & 0  & & 0  & -2\alpha\cdot 2 & -2
  \\
  0 & 0 & 0 & \ldots  &  & 0 & -2\alpha 
  \\
  0 & 0 & 0 & \ldots  & & & 0
 \end{pmatrix}\,.
 }
\end{align*}
As we will see in Proposition~\ref{simplified_system}, the required three-band matrix is a linear combination of the identity matrix with~$H(\alpha)$ for a specific value of~$\alpha$. 

We recall that $\stirmat$ denotes the Stirling matrix, defined 
in~\eqref{def:stirlingmatrix}, and that $\log(\;\cdot\;;\omega)$ is our choice from Section~\ref{sec:cut} for a complex logarithm that is holomorphic in the eigenvalue~$\lambda$ of the twisting endomorphism~$A$ (which is here 
represented by the Jordan-like matrix~$J$). Further we recall 
from~\eqref{eq:def_alpha_n_lambda} that 
\[
 \alpha_n = 2 \pi n - \iu \log(\lambda;\omega)
\]
for all~$n\in\Z$.

\begin{prop}\label{simplified_system}
For each $n\in\Z$, the Fourier coefficient function~$\hat f_n$ satisfies the 
system of second order differential equations
\begin{equation}\label{simplified_system_eq}
\bigr(y^2 \partial^2_y + s(1-s)  \bigl) \stirmat^{-1}  \hat f_n(y)  =  y^2  
\bigr(\alpha_n^2 + H(\iu \alpha_n)\bigl) \stirmat^{-1}   \hat f_n(y)
\end{equation}
on~$\R_{>0}$. Even stronger, this system is equivalent to the 
system~\eqref{d_entry_matrix} in Lemma~\ref{descent_to_a_system}.
\end{prop}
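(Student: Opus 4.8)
The plan is to derive \eqref{simplified_system_eq} from the system \eqref{d_entry_matrix} of Lemma~\ref{descent_to_a_system} by conjugating with the constant, invertible Stirling matrix~$T$. First I would left-multiply \eqref{d_entry_matrix} by~$T^{-1}$. Since $T^{-1}$ is independent of~$y$, it commutes with $y^2\partial_y^2$ and with the scalar $s(1-s)$, so this produces
\[
 \bigl(y^2\partial_y^2 + s(1-s)\bigr)T^{-1}\hat f_n(y) + y^2\,T^{-1}\Bigl[\partial_x^2\big|_{x=0}\bigl(e^{2\pi\iu nx}J^x\bigr)\Bigr]\hat f_n(y) = 0 .
\]
Inserting $T T^{-1}=\id$ in the last term rewrites it as $y^2\bigl(T^{-1}[\partial_x^2|_{x=0}(e^{2\pi\iu nx}J^x)]T\bigr)\,T^{-1}\hat f_n(y)$, so everything reduces to the single matrix identity
\[
 T^{-1}\Bigl[\partial_x^2\big|_{x=0}\bigl(e^{2\pi\iu nx}J^x\bigr)\Bigr]T = -\bigl(\alpha_n^2 + H(\iu\alpha_n)\bigr) ,
\]
after which \eqref{simplified_system_eq} follows by rearranging.

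To verify this identity I would invoke Lemma~\ref{Cmatrix}, which gives $T^{-1}J^xT=\lambda^x C(x)$ with $C(x)$ as in \eqref{eq:def_Cmatrix}. Using $\lambda^x=e^{x\log(\lambda;\omega)}$ together with the observation that $\iu\alpha_n = 2\pi\iu n + \log(\lambda;\omega)$ (from \eqref{eq:def_alpha_n_lambda}, since $\iu\cdot(-\iu\log(\lambda;\omega))=\log(\lambda;\omega)$), one gets
\[
 T^{-1}\bigl(e^{2\pi\iu nx}J^x\bigr)T = e^{\iu\alpha_n x}\,C(x) .
\]
As $T$ is constant, $\partial_x^2|_{x=0}$ passes through the conjugation, and the Leibniz rule yields
\[
 T^{-1}\Bigl[\partial_x^2\big|_{x=0}\bigl(e^{2\pi\iu nx}J^x\bigr)\Bigr]T = (\iu\alpha_n)^2 C(0) + 2(\iu\alpha_n)C'(0) + C''(0) = -\alpha_n^2\,C(0) + 2\iu\alpha_n\,C'(0) + C''(0) .
\]
The last step is the elementary evaluation of $C(0)$, $C'(0)$, $C''(0)$ from the entries $c_{ij}(x)=\binom{d-i}{j-i}x^{j-i}$ (for $i\le j$): only the $j=i$ term survives at $x=0$ in $C$, giving $C(0)=\id$; only the $j=i+1$ term survives in $C'$, giving the single band $c_{i,i+1}'(0)=\binom{d-i}{1}$; and only the $j=i+2$ term survives in $C''$, giving the single band $c_{i,i+2}''(0)=2\binom{d-i}{2}$. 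Comparing these two bands with the definition \eqref{eq:def_H} of $H(\iu\alpha_n)$ shows $2\iu\alpha_n C'(0)+C''(0) = -H(\iu\alpha_n)$, hence the conjugate equals $-\alpha_n^2\,\id - H(\iu\alpha_n) = -(\alpha_n^2+H(\iu\alpha_n))$, which is exactly what was needed.

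For the stronger claim that the two systems are equivalent, I would note that every step above is reversible: left multiplication by the constant invertible matrix $T^{-1}$ and the insertion of $TT^{-1}$ can both be undone, so \eqref{simplified_system_eq} implies \eqref{d_entry_matrix} by reading the chain of equalities backwards. The main obstacle here is not conceptual but bookkeeping: correctly pinning down the exponent $\iu\alpha_n = 2\pi\iu n + \log(\lambda;\omega)$ (a factor-of-$\iu$ and sign trap), and then matching the three surviving diagonal bands of $-\alpha_n^2 C(0) + 2\iu\alpha_n C'(0) + C''(0)$ against those of $\alpha_n^2 + H(\iu\alpha_n)$ without index errors in the binomial coefficients. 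The genuinely hard algebra (the Stirling/Chu--Vandermonde manipulation behind $T^{-1}J^xT=\lambda^x C(x)$) is already packaged in Lemma~\ref{Cmatrix}, so once that is in hand the present proposition is a short differentiate-and-compare argument.
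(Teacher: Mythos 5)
Your proof is correct and follows essentially the same route as the paper: left-multiply by $T^{-1}$, insert $TT^{-1}$, apply Lemma~\ref{Cmatrix} to rewrite $e^{2\pi\iu nx}T^{-1}J^xT = e^{\iu\alpha_n x}C(x)$, expand $\partial_x^2|_{x=0}$ by the Leibniz rule, and read off $C(0)=\id$, $C'(0)$, $C''(0)$ to identify the result with $-\alpha_n^2\id - H(\iu\alpha_n)$. The remark on reversibility for the equivalence claim matches the paper's (implicit) reasoning, since $T$ is constant and invertible.
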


\begin{proof}
Multiplying the system of differential equations~\eqref{d_entry_matrix} with 
$\stirmat^{-1}$ from the left, we get 
\begin{align*}
\bigl( y^2 \partial^2_{y} + s(1-s) \bigr) \stirmat^{-1} \hat f_n (y)  
& = - y^2  \stirmat^{-1} \bigl( \left.\partial^2_{x} \right|_{x = 0} (e^{2 \pi \iu  n x } J^x)  \bigr) \hat f_n(y)  \\
& = - y^2  \bigl( \left.\partial^2_{x} \right|_{x = 0} ( e^{2 \pi \iu  n x } \stirmat^{-1} J^x \stirmat)  \bigr) \stirmat^{-1}  \hat f_n (y)\,.
\end{align*}
With the matrix~$C(x)$ from~\eqref{eq:def_Cmatrix} and the identity $\lambda^x C(x) = \stirmat^{-1}J^x\stirmat$ shown in Lemma~\ref{Cmatrix} we obtain
\begin{align*}
& \left. \partial^2_{x} \right|_{x = 0}    (e^{2 \pi \iu  n x} \stirmat^{-1} J^x \stirmat) 
\\
& \qquad = \left. \partial^2_{x} \right|_{x = 0} (e^{2 \pi \iu  n x} \lambda^x C(x))
\\
& \qquad =  \left. \left(  \partial^2_{x} (e^{\iu \alpha_n x})\cdot  C(x) + 2 
\pa_x e^{\iu \alpha_n x}\cdot \partial_{x} C(x)  + e^{\iu \alpha_n x} \pa_x ^2  C(x) \right)\right|_{x=0}
\\
& \qquad = \left( -\alpha_n^2 \delta_0(i-j) + 2 \iu \alpha_n 
\binom{d-i}{1}\delta_0(j-i-1) \right.
\\
& \hphantom{ \qquad = \Big( -\alpha_n^2  \delta_0(i-j) + 2 \iu \alpha_n }  \left. +\ 2\binom{d-i}{2}\delta_0(j-i-2)\right)_{i,j=1}^d
\\
& \qquad = -\alpha_n^2\id - H(\iu \alpha_n)\,.
\end{align*}
Thus, the system~\eqref{d_entry_matrix} is equivalent to the system~\eqref{simplified_system_eq}.
\end{proof}

\section{Solution of the system of differential equations in Proposition~\ref{simplified_system} if $n=0$ and $\alpha_0=0$}
\label{sec:proof_zero}

We continue to suppose that~$V$ is a complex vector space of dimension~$d\in\N$, and that~$A$ is a linear isomorphism of~$V$ that acts irreducibly.
We fix a basis with respect to which~$A$ is represented by the Jordan-like block matrix $J=J(\lambda)$ with $\lambda\in\C$, $\lambda\not=0$, being the eigenvalue of~$A$, and we identify $V$ with~$\C^d$.
We fix a complex logarithm~$\log(\;\cdot\;;\omega)$ by choosing~$\omega\in\C$ with the properties as in Section~\ref{sec:cut}.
We let $f\colon\h\to V\cong\C^d$ be a $J$-twist periodic Laplace eigenfunction with spectral parameter~$s$ and consider its Fourier expansion
\[
 f(z) = \sum_{n\in\Z} J^x \hat f_n(y) e^{2\pi\iu nx}\qquad (z=x+\iu y\in\h)\,.
\]
In Proposition~\ref{simplified_system} we have shown that the Fourier 
coefficient function~$\hat f_n$, for any $n\in\Z$, satisfies a certain system of second order differential equations.
We will see that the solutions of this system of differential equations are based on the functions~$\fu$ and~$\futoo$, defined in 
Sections~\ref{sec:IandKpart1} and~\ref{sec:IandKpart2}.
The second variable, $\alpha$, of these functions encodes a certain combination of the considered $n$ and the considered eigenvalue~$\lambda$ of~$J$.
As $\fu$ and~$\futoo$ are qualitatively different depending on whether $\alpha$ equals~$0$ or not, the discussion of the solution space of the differential equations naturally splits into these cases. 

In this section, we will provide (see Proposition~\ref{prop:basis_sol_zero} 
below) the space of solutions of the system of differential equations for the 
case $n=0$ and $\lambda=1$, i.e., $\alpha_n=0$ in the notation of 
Proposition~\ref{simplified_system}. We recall the Stirling matrix~$\stirmat$ 
from~\eqref{def:stirlingmatrix}, the two-band matrix~$H(0)$ 
from~\eqref{eq:def_H} and set
\[
 w(y) \coloneqq \stirmat^{-1}\hat f_0(y)\,.
\]
Then the system of differential equations becomes
\begin{equation}\label{eq:masterdiff_zero}
\left( y^2\partial_y^2 + s(1-s)\right) w(y) = y^2 
H(0)w(y)\,,\quad y\in\R_{>0}\,.
\end{equation}
We will first provide two particular solutions of~\eqref{eq:masterdiff_zero} and then construct from these all solutions.
We note that this latter step is more involved than the standard argumentation with linear combinations as the space of solutions of~\eqref{eq:masterdiff_zero} is~$2d$-dimensional, but we start 
with only two particular solutions.
Proposition~\ref{prop:basis_sol_zero} immediately establishes  Theorem~\ref{Fourier:main_theorem} in the case~$n=0$ and~$\lambda=1$. 
We consider $s\in\C$ to be fixed throughout and recall from~\eqref{eq:def_H} 
that 
\begin{align*}
 H(0) & = \left( -2\binom{d-i}{2}\delta_0(j-i-2) \right)_{i,j=1}^d 
 \\
 & = \Bigl( -(d-i)(d-i-1)\delta_0(j-i-2) \Bigr)_{i,j=1}^d
 \\
 & = 
 \begin{pmatrix}
  0 & 0 & -(d-1)(d-2) & 0 & 0 &  \ldots & 0
  \\
  0 & 0 & 0 & -(d-2)(d-3) & 0 & \ldots & 0
  \\
  \vdots & \vdots & \vdots  &  & \vdots &  & \vdots
  \\
  0 & 0  & 0  & & 0  & 0 & -2
  \\
  0 & 0 & 0 & \ldots  &  & 0 & 0 
  \\
  0 & 0 & 0 & \ldots  & & & 0
 \end{pmatrix}\,.
\end{align*}

\subsection{Particular solutions}
In this section we provide two solutions of the differential 
equation~\eqref{eq:masterdiff_zero}. 

\begin{prop}\label{prop:zero_nonexc}
Let $\fuPsi\in\{\fu,\futoo\}$. Then the map $w\colon\R_{>0}\to V$, 
\[
 w \coloneqq 
 \begin{pmatrix}
  \fuPsi(d-1,0,\cdot,s)
  \\
  \vdots
  \\
  \fuPsi(1,0,\cdot,s)
  \\
  \fuPsi(0,0,\cdot,s)
 \end{pmatrix}
\]
is a solution of the second order differential equation
\[
 \left( y^2\partial_y^2 + s(1-s) \right)w(y) = y^2 H(0) w(y)\,,\quad 
y\in\R_{>0}\,.
\]
\end{prop}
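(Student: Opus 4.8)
The plan is to verify the claimed second-order system directly by reducing it, via the explicit shape of $H(0)$, to a \emph{finite cascade of scalar ordinary differential equations}, one for each component $w_i = \fuPsi(d-i,0,\cdot,s)$, and then to check each of those scalar equations using the defining formulas for $\fu$ and $\futoo$ at the argument $\alpha=0$ given in Definition~\ref{defazero} together with the combinatorial recursions for $\cone$, $\ctwo$ and $\cthree$ established in Section~\ref{sec:IandKpart2}. First I would read off from the displayed form of $H(0)$ that, writing $w=(w_1,\dots,w_d)^{\mathrm t}$ with $w_i=\fuPsi(d-i,0,\cdot,s)$, the $i$-th row of the system~\eqref{eq:masterdiff_zero} is
\[
 \bigl(y^2\partial_y^2 + s(1-s)\bigr)w_i(y) = -(d-i)(d-i-1)\,y^2\,w_{i+2}(y),
\]
with the right-hand side interpreted as $0$ when $i>d-2$ (so that $w_{d-1}$ and $w_d$, i.e.\ $\fuPsi(1,0,\cdot,s)$ and $\fuPsi(0,0,\cdot,s)$, satisfy the homogeneous Euler equation $\bigl(y^2\partial_y^2+s(1-s)\bigr)g=0$). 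Reindexing by $m\coloneqq d-i\in\{0,\dots,d-1\}$, the content of the proposition is exactly the family of scalar identities
\[
 \bigl(y^2\partial_y^2 + s(1-s)\bigr)\fuPsi(m,0,y,s) = -m(m-1)\,y^2\,\fuPsi(m-2,0,y,s),\qquad m\in\N_0,
\]
for $\fuPsi\in\{\fu,\futoo\}$, where the right side is $0$ for $m\in\{0,1\}$. Thus the whole proposition splits into the verification of this one recursive ODE in the single index $m$.

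Next I would verify these scalar identities by direct differentiation, separating the ``exceptional'' and ``non-exceptional'' ranges of $s$ exactly as in Definition~\ref{defazero}. In the non-exceptional case, $\fu(m,0,y,s)=\cone(m,s)\,y^{s+2\lfloor m/2\rfloor}$; applying the Euler operator $y^2\partial_y^2+s(1-s)$ to $y^{a}$ gives $\bigl(a(a-1)+s(1-s)\bigr)y^{a}=\bigl(a-s\bigr)\bigl(a+s-1\bigr)y^{a}$, and with $a=s+2\lfloor m/2\rfloor$ this equals $2\lfloor m/2\rfloor\,\bigl(2\lfloor m/2\rfloor+2s-1\bigr)\,y^{a}$, while the target term is $-m(m-1)\,\cone(m-2,s)\,y^{s+2\lfloor (m-2)/2\rfloor+2}=-m(m-1)\,\cone(m-2,s)\,y^{a}$. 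Matching powers is automatic; matching coefficients is precisely the recursion hidden in~\eqref{def_cof_conc}, i.e.\ $2\lfloor m/2\rfloor(2\lfloor m/2\rfloor+2s-1)\,\cone(m,s) = -m(m-1)\,\cone(m-2,s)$, which one checks from the closed form of $\cone$ (noting $\lfloor m/2\rfloor=\lfloor(m-2)/2\rfloor+1$ and that $\cone(0,s)=\cone(1,s)=1$ kills the $m\in\{0,1\}$ cases). For $\futoo$ the computation is identical with $s$ replaced by $1-s$, using $\fu \leftrightarrow \futoo$, $y^{s+\cdots}\leftrightarrow y^{1-s+\cdots}$ and the symmetry $s(1-s)=(1-s)s$.

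The main obstacle will be the \emph{exceptional/resonant case}, where for $s\in E(d)\cap[\tfrac12-\tfrac m2,\tfrac12]$ (respectively $s\in E(d)\cap[\tfrac32,\tfrac12+\tfrac m2]$ for $\futoo$) the solution $\fuPsi(m,0,y,s)$ acquires a logarithmic term and is governed by $\ctwo$ and $\cthree$ rather than $\cone$. Here one must apply $y^2\partial_y^2+s(1-s)$ to $\ctwo(m-1+2s,s)\,y^{s+2\lfloor m/2\rfloor}\log y + \cthree(m-1+2s,s)\,y^{s+2\lfloor m/2\rfloor}$; since at these resonant $s$ the exponent $a=s+2\lfloor m/2\rfloor$ makes $(a-s)(a+s-1)$ vanish or the companion exponent collide, the $\log y$ term feeds into the polynomial term through the cross-derivative $\partial_y^2(y^a\log y)=a(a-1)y^{a-2}\log y + (2a-1)y^{a-2}$, so the identity one needs becomes a \emph{coupled} pair of recursions: one for the $\log$-coefficients (this is exactly the $\ctwo$-recursion~\eqref{defctwo}) and one for the non-log coefficients expressing $\cthree(m-1+2s,s)$ in terms of $\cthree$ and $\ctwo$ at smaller indices (this is the content of~\eqref{eq:cthree_start2}--\eqref{eq:def_cthree}, in particular the prefactor $-\tfrac{\frac12-s+2\lfloor k/2\rfloor}{2\lfloor k/2\rfloor(\frac12-s+\lfloor k/2\rfloor)}$). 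The bookkeeping across the boundary of the exceptional range—i.e.\ checking that the non-exceptional formula on one side and the exceptional formula on the other side match up as solutions of the same cascade, and that the initializations~\eqref{eq:ctwo_start1}--\eqref{eq:ctwo_start2} and~\eqref{eq:cthree_start1}--\eqref{eq:cthree_start2} are consistent with the base cases $m\in\{0,1\}$—is where the technical care is concentrated. I would organize this by fixing $\fuPsi=\fu$, treating $m$ by (an effective two-step) induction, handling the three subcases ($s$ below, inside, at the right endpoint of the exceptional interval) separately, and then obtain the $\futoo$ statement formally by the substitution $s\mapsto 1-s$ and the corresponding index shift $m-1+2s\mapsto m+1-2s$ built into Definition~\ref{defazero}(ii).
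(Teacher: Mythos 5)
Your plan reproduces the paper's proof essentially step for step: reduce via the two‑band shape of $H(0)$ to the scalar cascade $(y^2\partial_y^2+s(1-s))\fuPsi(m,0,y,s)=m(1-m)y^2\fuPsi(m-2,0,y,s)$, verify it for $\fu$ by applying the Euler operator to $y^{s+2\lfloor m/2\rfloor}$ (and the logarithmic terms in the exceptional range), using precisely the $\cone$‑recursion (Lemma~\ref{lem:recur_cone}) off resonance and the coupled $\ctwo$/$\cthree$‑recursions (Lemma~\ref{lem:recur_ctwo_cthree}) on resonance, with special attention to the boundary indices $m=k,k+1$; and then dispatch $\futoo$ via the symmetry $\fu(m,0,y,s)=\futoo(m,0,y,1-s)$. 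This is the same decomposition, the same case split, and the same key lemmas the paper uses.
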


Preparatory for the proof of Proposition~\ref{prop:zero_nonexc} we present a few recursive identities between the combinatorial coefficients of the functions~$\fu$ and~$\futoo$.
We recall the set 
\[
 E(d) = \left\{ \frac12\pm j\setmid 
j=0,1,\ldots,\left\lfloor\frac{d-1}{2}\right\rfloor \right\}
\]
of exceptional spectral parameters from~\eqref{s-specialvalues} and the 
coefficient function~$\cone$ from~\eqref{def_cof_conc}.
We will require the following recursive identity for~$\cone$.

\begin{lemma}\label{lem:recur_cone}
Let $m\in\N$, $m\geq 2$, and $s\in\C$, $s\notin E(d)\cap 
\left[\tfrac12-\tfrac{m}{2},-\tfrac12\right]$.
Then we have
\[
2 \left\lfloor\frac{m}{2}\right\rfloor \left( 2s-1 + 
2\left\lfloor\frac{m}{2}\right\rfloor \right) \cone(m,s) = m(1-m)\cone(m-2,s)\,.
\]
\end{lemma}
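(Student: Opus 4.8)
The plan is to prove Lemma~\ref{lem:recur_cone} by a direct computation using the closed form of $\cone$ from~\eqref{def_cof_conc}, splitting into the cases $m$ even and $m$ odd. First I would record that the hypothesis $s\notin E(d)\cap\left[\tfrac12-\tfrac m2,-\tfrac12\right]$ guarantees that none of the factors $2j+2s-1$ for $j=1,\ldots,\left\lfloor\frac m2\right\rfloor$ vanishes, so that $\cone(m,s)$, $\cone(m-2,s)$ are both well-defined and nonzero (here one must be a little careful: strictly one needs $s$ to avoid the ``forbidden'' set for both indices $m$ and $m-2$; since $\left\lfloor\frac{m-2}2\right\rfloor=\left\lfloor\frac m2\right\rfloor-1$, avoiding the set for index $m$ suffices). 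With that in place, the identity becomes an equality between explicit quotients of factorials and products, which I would verify by forming the ratio $\cone(m,s)/\cone(m-2,s)$ directly from~\eqref{def_cof_conc}.

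The key computation is as follows. Write $k\coloneqq\left\lfloor\frac m2\right\rfloor$, so that $\left\lfloor\frac{m-2}2\right\rfloor=k-1$. From~\eqref{def_cof_conc},
\begin{align*}
 \frac{\cone(m,s)}{\cone(m-2,s)}
 &= (-1)\,\frac{m!}{(m-2)!}\cdot\frac{2^{k-1}(k-1)!}{2^{k}\,k!}\cdot\frac{\prod_{j=1}^{k-1}(2j+2s-1)}{\prod_{j=1}^{k}(2j+2s-1)}
 \\
 &= -\,m(m-1)\cdot\frac{1}{2k}\cdot\frac{1}{2k+2s-1}\,.
\end{align*}
Rearranging gives $2k\,(2k+2s-1)\,\cone(m,s) = -m(m-1)\,\cone(m-2,s) = m(1-m)\,\cone(m-2,s)$, which is exactly the claimed identity once $k$ is written back as $\left\lfloor\frac m2\right\rfloor$. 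For the boundary behaviour I would separately note the case $m=2$ (resp.\ $m=3$), where $\cone(m-2,s)=\cone(0,s)=1$ (resp.\ $\cone(1,s)=1$) by~\eqref{eq:def_cone_start}, and check that the general formula above still applies since~\eqref{eq:def_cone_start} is subsumed in~\eqref{def_cof_conc} under the empty-product convention; thus no separate base case is actually needed.

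I do not anticipate a genuine obstacle here: the statement is an elementary algebraic identity among the combinatorial coefficients, and the only subtlety is bookkeeping the parity of $m$ (handled uniformly by working with $k=\left\lfloor\frac m2\right\rfloor$, since $m!/(m-2)!=m(m-1)$ regardless of parity) and confirming that the excluded set of spectral parameters is precisely what is needed to keep all denominators nonzero. The main care is simply to make sure the hypothesis on $s$ is invoked correctly so that both $\cone(m,s)$ and $\cone(m-2,s)$ lie in the honest domain of $\cone$ and the division performed above is legitimate.
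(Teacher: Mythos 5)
Your proposal is correct and takes essentially the same approach as the paper: both compute the ratio $\cone(m,s)/\cone(m-2,s)$ directly from the closed form~\eqref{def_cof_conc} (using $\lfloor(m-2)/2\rfloor=\lfloor m/2\rfloor-1$ and $m!/(m-2)!=m(m-1)$ uniformly in the parity of $m$) and rearrange. Your remark about nonvanishing of $\cone(m-2,s)$ is a small extra bit of care not present in the paper, which instead just rewrites $\cone(m,s)$ multiplicatively in terms of $\cone(m-2,s)$ and therefore never actually divides; both routes are fine under the stated hypothesis on $s$.
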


\begin{proof}
Starting at the definition of~$\cone$ in~\eqref{def_cof_conc}, we find
\begin{align*}
\cone(m,s) & = 
(-1)^{\left\lfloor\frac{m}{2}\right\rfloor}\,\frac{m!}{2^{\left\lfloor\frac{m}{2
}\right\rfloor} \left\lfloor\frac{m}{2}\right\rfloor!\, 
\prod\limits_{j=1}^{\left\lfloor\frac{m}{2}\right\rfloor} (2j+2s-1)}
\\
& = 
(-1)^{\left\lfloor\frac{m-2}{2}\right\rfloor}\,\frac{(m-2)!}{2^{
\left\lfloor\frac{m-2}{2}\right\rfloor} 
\left\lfloor\frac{m-2}{2}\right\rfloor!\,\prod\limits_{j=1}^{\left\lfloor\frac{
m-2}{2}\right\rfloor}(2j+2s-1)}
\\
& \qquad\times 
(-1)\,\frac{m(m-1)}{2\,\left\lfloor\frac{m}{2}\right\rfloor\,\left( 
2\,\left\lfloor\frac{m}{2}\right\rfloor + 2s - 1\right)}
\\
& = \frac{m(1-m)}{2\,\left\lfloor\frac{m}{2}\right\rfloor\,\left( 
2\,\left\lfloor\frac{m}{2}\right\rfloor + 2s - 1\right)}\cone(m-2,s)\,.
\end{align*}
This completes the proof.
\end{proof}

For the next set of recursive identities we recall the definitions of~$\ctwo$ 
and~$\cthree$ from~\eqref{eq:ctwo_start1}--\eqref{defctwo} 
and~\eqref{eq:cthree_start1}--\eqref{eq:def_cthree}. 

\begin{lemma}\label{lem:recur_ctwo_cthree}
Let $m\in\N$, $m\geq2$, and $s\in E(d) \cap \left[\tfrac32 - \tfrac{m}{2}, 
\tfrac12 \right]$. Then we have
\[
m(1-m)\,\ctwo(m-3+2s,s) = 
2\,\left\lfloor\frac{m}{2}\right\rfloor\,\left( 
2\left\lfloor\frac{m}{2}\right\rfloor + 2s - 1\right)\,\ctwo(m-1+2s,s)\,,
\]
and 
\begin{align*}
m(1-m)\,\cthree(m-3&+2s,2) 
\\
& = 
\left( 4\,\left\lfloor\frac{m}{2}\right\rfloor + 2s - 1\right)\,\ctwo(m-1+2s,s) 
\\
& \qquad\quad + 2\,\left\lfloor\frac{m}{2}\right\rfloor\,\left( 
2\,\left\lfloor\frac{m}{2}\right\rfloor + 2s - 1\right)\,\cthree(m-1+2s,s)\,.
\end{align*}
\end{lemma}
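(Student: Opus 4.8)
The plan is to derive both identities by unwinding the recursions \eqref{defctwo} and \eqref{eq:def_cthree} after a convenient change of index. Since $s\in E(d)$, the number $2s$ is an odd integer, so $k\coloneqq m-1+2s$ lies in $\N_0$; moreover the hypothesis $s\in\bigl[\tfrac32-\tfrac m2,\tfrac12\bigr]$ reads precisely as $2\le k\le m$, so $\ctwo$ and $\cthree$ are defined at both $(k,s)$ and $(k-2,s)$. First I would record the bookkeeping identities $1-2s+k=m$, $\;-2s+k=m-1$, $\;\lfloor k/2\rfloor=\lfloor m/2\rfloor+s-\tfrac12$, whence $\tfrac12-s+\lfloor k/2\rfloor=\lfloor m/2\rfloor$ and $2\lfloor k/2\rfloor=2\lfloor m/2\rfloor+2s-1$; and note that $\ctwo(\,\cdot\,,s)$ never vanishes on its domain, since by \eqref{defctwo} every value is an explicit nonzero scalar times one of the nonzero base values $\ctwo(0,s),\ctwo(1,s)$.

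For the first identity I would substitute $k=m-1+2s$ into \eqref{defctwo}: the prefactor there becomes $-\tfrac14\,m(m-1)\big/\bigl(\lfloor m/2\rfloor(\lfloor m/2\rfloor+s-\tfrac12)\bigr)$, and multiplying through by $2\lfloor m/2\rfloor(2\lfloor m/2\rfloor+2s-1)=4\lfloor m/2\rfloor(\lfloor m/2\rfloor+s-\tfrac12)$ turns \eqref{defctwo} into
\[
 m(1-m)\,\ctwo(m-3+2s,s)=2\Bigl\lfloor\tfrac m2\Bigr\rfloor\Bigl(2\Bigl\lfloor\tfrac m2\Bigr\rfloor+2s-1\Bigr)\,\ctwo(m-1+2s,s),
\]
which is exactly the first claim.

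For the second identity the crux is to establish the companion recursion
\[
 \ctwo(k-2,s)\,\cthree(k,s)=\ctwo(k,s)\,\cthree(k-2,s)-\frac{\tfrac12-s+2\lfloor k/2\rfloor}{2\lfloor k/2\rfloor\bigl(\tfrac12-s+\lfloor k/2\rfloor\bigr)}\,\ctwo(k,s)\,\ctwo(k-2,s)\qquad(k\ge2).
\]
For $k\in\{2,3\}$ this is \eqref{eq:cthree_start1}--\eqref{eq:cthree_start2} read verbatim. For $k\ge4$, put $L\coloneqq\lfloor k/2\rfloor$ and $\delta\coloneqq\delta_\odd(k)$, and note $\delta_\odd(k-2)=\delta$, $\lfloor(k-2)/2\rfloor=L-1$, $k=2L+\delta$. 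The key observation is that in the sum in \eqref{eq:def_cthree} the term of index $j$ for $\cthree(k,s)$ equals $\ctwo(k,s)/\ctwo(k-2,s)$ times the term of the same index $j$ for $\cthree(k-2,s)$ for every $1\le j\le L-2$ --- indeed, the ratio of these two terms works out to $-\tfrac14(1-2s+k)(-2s+k)\big/\bigl(L(\tfrac12-s+L)\bigr)$, which is the very scalar in \eqref{defctwo}, the factor $\ctwo(2j+\delta,s)$ being common to both --- while the ``top'' term of index $j=L-1$, present only for $\cthree(k,s)$, evaluates (using $k=2L+\delta$) to $-\bigl(\tfrac12-s+2(L-1)\bigr)\ctwo(k,s)\big/\bigl(2(L-1)(\tfrac12-s+L-1)\bigr)$. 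Since $\sum_{j=1}^{L-2}$ of the index-$j$ terms of $\cthree(k-2,s)$ equals $\cthree(k-2,s)$ minus its leading term, collecting everything and using $\bigl(\ctwo(k,s)/\ctwo(k-2,s)\bigr)\ctwo(k-2,s)=\ctwo(k,s)$ makes the two terms carrying the factor $\tfrac12-s+2(L-1)$ cancel, leaving precisely the companion recursion.

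Finally, I would substitute $k=m-1+2s$ into the companion recursion, using $\tfrac12-s+2\lfloor k/2\rfloor=2\lfloor m/2\rfloor+s-\tfrac12$ and $2\lfloor k/2\rfloor\bigl(\tfrac12-s+\lfloor k/2\rfloor\bigr)=\lfloor m/2\rfloor(2\lfloor m/2\rfloor+2s-1)$, then multiply by $2\lfloor m/2\rfloor(2\lfloor m/2\rfloor+2s-1)\big/\ctwo(m-3+2s,s)$ and invoke the first identity just proved to replace $2\lfloor m/2\rfloor(2\lfloor m/2\rfloor+2s-1)\,\ctwo(m-1+2s,s)$ by $m(1-m)\,\ctwo(m-3+2s,s)$; rearranging yields the second claim (the second argument of $\cthree$ on the left being $s$). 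The only non-routine step is the index bookkeeping inside the sum of \eqref{eq:def_cthree} --- matching the terms of $\cthree(k,s)$ and $\cthree(k-2,s)$ one index at a time, isolating the top term, and checking the cancellation; once the companion recursion is in hand, both identities are immediate consequences of the substitution $k=m-1+2s$ together with the first identity.
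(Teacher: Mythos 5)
Your proof is correct and follows essentially the same route as the paper's: you obtain the $\ctwo$ identity by substituting $k=m-1+2s$ into the defining recursion \eqref{defctwo}, and for the $\cthree$ identity you match the summands of index $j$ in the sums defining $\cthree(k,s)$ and $\cthree(k-2,s)$, peel off the top term, and combine with the $\ctwo$ identity. The only cosmetic difference is that you package the index-matching step as a standalone ``companion recursion'' in $k$ and then specialize $k=m-1+2s$, whereas the paper carries out the same manipulation directly with $k=m-1+2s$ in place.
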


\begin{proof}
We note that 
\[
 2 \leq m-1+2s \leq m\,.
\]
Thus, using~\eqref{defctwo}, we find
\begin{align*}
\ctwo(m-1+2s,s) & = -\frac14\,\frac{ m(m-1) }{ 
\left\lfloor\frac{m-1+2s}{2}\right\rfloor \left( \frac12 - s + 
\left\lfloor\frac{m-1+2s}{2}\right\rfloor\right)}\,\ctwo(m-3+2s,s)
\\
& = -\frac14\,\frac{m(m-1)}{\left( \left\lfloor\frac{m}{2}\right\rfloor + s - 
\frac12\right)\,\left\lfloor\frac{m}{2}\right\rfloor}\,\ctwo(m-3+2s,s)
\\
& = \frac{m(1-m)}{2\,\left\lfloor\frac{m}{2}\right\rfloor\,\left( 
2\left\lfloor\frac{m}{2}\right\rfloor + 2s -1 \right)}\,\ctwo(m-3+2s,s)\,.
\end{align*}
This proves the first of the claimed identities. For the second identity we 
start at the definition~\eqref{eq:def_cthree} of~$\cthree$ and obtain
\begin{align*}
\cthree(m&-1+2s,s) 
\\
& = - \frac{\frac12 - s + 2\,\left\lfloor\frac{m-1+2s}{2}\right\rfloor}{ 
2\,\left\lfloor\frac{m-1+2s}{2}\right\rfloor\, \left( \frac12 - s + 
\left\lfloor\frac{m-1+2s}{2}\right\rfloor\right)}\,\ctwo(m-1+2s,2) 
\\
& \qquad + \sum_{j=1}^{\lfloor\frac{m-3+2s}{2}\rfloor} 
-(-1)^{\lfloor\frac{m-3+2s}{2}\rfloor+j+1}\,\frac{\frac 1 2 - s +2j}{2j\,\left( 
\frac 
1 2 - s +j \right)}\,\frac{1}{4^{\lfloor\frac{m-3+2s}{2}\rfloor-j}}\,\frac14
\\
&\qquad \hphantom{\coloneqq}\ \times \frac{ 
\prod\limits_{\ell=2j+1+\delta_\odd(m-1+2s)}^{m-1+2s} (1-2s+\ell)}{ 
\prod\limits_{\ell=j+1}^{\lfloor\frac{m-1+2s}{2}\rfloor} \ell\,\left( \frac 1 2 
- s 
+\ell\right)}\, \ctwo(2j+ \delta_\odd(m-1+2s), s)\,,
\end{align*}
with~$\delta_\odd$ as defined in~\eqref{eq:def_deltaodd}. We note that 
\[
\delta_\odd(m-1+2s) = \delta_\odd(m-3+2s)\,,
\]
and that, for any $j\in\N$ with $j\leq 
\left\lfloor\frac{m-3+2s}{2}\right\rfloor$, we have
\begin{align*}
\frac{ \prod\limits_{\ell=2j+1+\delta_\odd(m-1+2s)}^{m-1+2s} (1-2s+\ell)}{ 
\prod\limits_{\ell=j+1}^{\lfloor\frac{m-1+2s}{2}\rfloor} \ell\,\left( \frac 1 2 
- s +\ell\right)} 
 & = \frac{m(m-1)}{\left\lfloor\frac{m-1+2s}{2}\right\rfloor\,\left(\frac12 - s 
+  \left\lfloor\frac{m-1+2s}{2}\right\rfloor\right)}
 \\
 & \qquad \times \frac{ \prod\limits_{\ell=2j+1+\delta_\odd(m-3+2s)}^{m-3+2s} 
(1-2s+\ell)}{ 
\prod\limits_{\ell=j+1}^{\lfloor\frac{m-3+2s}{2}\rfloor} \ell\,\left( \frac 1 2 
- s +\ell\right)}\,.
\end{align*}
Using these identities in the equality above, we obtain
\begin{align*}
\cthree(m&-1+2s,s) 
\\
& = - \frac{4\,\left\lfloor\frac{m}{2}\right\rfloor + 2s - 1}{ 
2\,\left\lfloor\frac{m}{2}\right\rfloor\, \left( 
2\,\left\lfloor\frac{m}{2}\right\rfloor + 2s - 1\right)}\,\ctwo(m-1+2s,2) 
\\
& \qquad + \frac{m(1-m)}{2\,\left\lfloor\frac{m}{2}\right\rfloor\, \left( 
2\,\left\lfloor\frac{m}{2}\right\rfloor + 2s - 1\right) 
}\sum_{j=1}^{\lfloor\frac{m-3+2s}{2}\rfloor} 
(-1)^{\lfloor\frac{m-3+2s}{2}\rfloor+j+1}
\\
&\qquad \hphantom{\coloneqq}\ \times \frac{\frac 1 2 - s +2j}{2j\,\left( 
\frac 
1 2 - s +j \right)}\,\frac{1}{4^{\lfloor\frac{m-3+2s}{2}\rfloor-j}}
\\
&\qquad \hphantom{\coloneqq}\ \times \frac{ 
\prod\limits_{\ell=2j+1+\delta_\odd(m-3+2s)}^{m-3+2s} (1-2s+\ell)}{ 
\prod\limits_{\ell=j+1}^{\lfloor\frac{m-3+2s}{2}\rfloor} \ell\,\left( \frac 1 2 
- s 
+\ell\right)}\, \ctwo(2j+ \delta_\odd(m-3+2s), s)\,.
\end{align*}
The summand for $j=\left\lfloor\frac{m-3+2s}{2}\right\rfloor$ in the ``large'' 
sum is 
\[
 (-1)\, \frac{\frac12 - s + 2\,\left\lfloor\frac{m-3+2s}{2}\right\rfloor}{ 
2\,\left\lfloor\frac{m-3+2s}{2}\right\rfloor\, \left( \frac12 - s + 
\left\lfloor\frac{m-3+2s}{2}\right\rfloor\right)}\, \ctwo(m-3+2s,s)\,,
\]
since, as one easily checks,
\[
 2\,\left\lfloor\frac{m-3+2s}{2}\right\rfloor + 1 + \delta_\odd(m-3+2s) = 
m-2+2s\,. 
\]
It now follows that 
\begin{align*}
 \cthree(m-1+2s,s) & = - \frac{4\,\left\lfloor\frac{m}{2}\right\rfloor + 2s - 
1}{ 2\,\left\lfloor\frac{m}{2}\right\rfloor\, \left( 
2\,\left\lfloor\frac{m}{2}\right\rfloor + 2s - 1\right)}\,\ctwo(m-1+2s,2) 
\\
& \quad + \frac{m(1-m)}{2\,\left\lfloor\frac{m}{2}\right\rfloor\, \left( 
2\,\left\lfloor\frac{m}{2}\right\rfloor + 2s - 1\right) }\,\cthree(m-3+2s,s)\,,
\end{align*}
which establishes the second of the claimed identities.
\end{proof}

With these preparations we can now present a proof of 
Proposition~\ref{prop:zero_nonexc}.

\begin{proof}[Proof of Proposition~\ref{prop:zero_nonexc}]
Using Definition~\ref{defazero},  we note that for all $s\in\C$, 
$s\not=\frac12$, and all $m\in\N_0$, $y\in\R_{>0}$, we have
\[
 \fu(m,0,y,s) = \futoo(m,0,y,1-s)\,.
\]
Therefore it suffices to prove the claimed statement for the two cases that 
$\fuPsi=\fu$ and $s\in\C$ arbitrary as well as that $\fuPsi=\futoo$ and 
$s=\frac12$. Equivalently, it suffices to establish the statement for the cases
\begin{enumerate}[label=$\mathrm{(\alph*)}$, ref=$\mathrm{\alph*}$]
 \item\label{eq:Psi_1} $s\in\C$, $s\notin E(d)\cap 
\left[\frac12-\frac{d-1}{2},-\frac12\right]$ and for all $m\in\{0,\ldots, 
d-1\}$,
 \[
  \fuPsi(m,0,y,s) = \cone(m,s)\,y^{s+2\,\lfloor\frac{m}{2}\rfloor}\,,
 \]
\item\label{eq:Psi_2} $s=\frac12-\frac{k}{2}$ for a (unique) $k\in\{0,\ldots, 
d-1\}\cap 2\N_0$ (thus, $s\in E(d)$) and 
\begin{align*}
 \fuPsi(m,0,y,s) =
 \begin{cases}
\ctwo(m-1+2s,s)\,y^{s+2\,\lfloor\frac{m}{2}\rfloor}\,\log y &\ \text{if $k\leq 
m\leq d-1$}\,,
\\
\quad + \cthree(m-1+2s,s)\,y^{s+2\,\lfloor\frac{m}{2}\rfloor} 
\\[3mm]
\cone(m,s)\,y^{s+2\,\lfloor\frac{m}{2}\rfloor} &\ \text{if $0\leq m < k$}\,.
 \end{cases}
\end{align*}
\end{enumerate}
In both cases, establishing the claimed statement is equivalent to proving that 
\begin{equation}\label{eq:ODE_single}
 \left(y^2\,\partial_y^2 + s(1-s)\right) \fuPsi(m,0,y,s) = 
m(1-m)\,y^2\,\fuPsi(m-2,0,y,s)
\end{equation}
for all $m\in\{0,\ldots, d-1\}$.

We consider the case~\eqref{eq:Psi_1}. For each $m\in\{0,\ldots, d-1\}$, a 
straightforward calculation shows 
\begin{align*}
 \left(y^2\,\partial_y^2 + 
s(1-s)\right)\cone(m,s)\, & y^{s+2\,\lfloor\frac{m}{2}\rfloor} 
\\
& = 
2\,\left\lfloor\frac{m}{2}\right\rfloor\left(2s-1+2\,\left\lfloor\frac{m}{2}
\right\rfloor\right)\,\cone(m,s)\,y^{s+2\,\lfloor\frac{m}{2}\rfloor}\,.
\end{align*}
For $m\in\{0,1\}$, the right hand side of this equation obviously vanishes. For 
$m\geq 2$ it equals 
\[
 m(1-m)\cone(m-2,s)\,y^{s+2\,\lfloor\frac{m}{2}\rfloor}\,,
\]
by Lemma~\ref{lem:recur_cone}. This shows~\eqref{eq:ODE_single} for 
case~\eqref{eq:Psi_1}. 

We now discuss the case~\eqref{eq:Psi_2}. For $m\in\{0,\ldots, k-1\}$, the 
definition of $\fuPsi$ is as in~\eqref{eq:Psi_1}, and also the sought-for 
equality in~\eqref{eq:ODE_single} reads as in~\eqref{eq:Psi_1} and hence follows 
as above. For $m=k+p$ with $p\in\{0,1\}$, we have 
\[
 m - 1 + 2s = p
\]
and 
\[
 2s-1-\left\lfloor\frac{m}{2}\right\rfloor = 0\,.
\]
It follows that 
\begin{align*}
 \big(y^2\,\partial_y^2 & + s(1-s)\big) \fuPsi(m,0,y,s)
 \\
 & = \left(y^2\,\partial_y^2 + 
s(1-s)\right)\ctwo(p,s)\,y^{s+2\,\lfloor\frac{m}{2}\rfloor}\,\log y
 \\
 & = \ctwo(p,s)\,\Big( 2\,\left\lfloor\frac{m}{2}\right\rfloor\,\left(2s-1 - 
2\,\left\lfloor\frac{m}{2}\right\rfloor\right)\,y^{s+2\lfloor\frac{m}{2}\rfloor}
\log y
 \\
 & \qquad\hphantom{\ctwo(p,s)\,\Big( 
2\,\left\lfloor\frac{m}{2}\right\rfloor\,y^{s+2\lfloor\frac{m}{2}\rfloor}} + 
\left( 2s - 1 +  
4\,\left\lfloor\frac{m}{2}\right\rfloor\right)\,y^{s+2\lfloor\frac{m}{2}\rfloor}
\Big)
 \\
 & = \ctwo(p,s)\left( 2s - 1 +  
4\,\left\lfloor\frac{m}{2}\right\rfloor\right)\,y^{s+2\lfloor\frac{m}{2}\rfloor}
 \\
 & = m(1-m)\cone(m-2,s)\,y^{s+2\lfloor\frac{m}{2}\rfloor}
 \\
 & = m(1-m)\,y^2\,\fuPsi(m-2,0,y,s)\,,
\end{align*}
having used~\eqref{eq:ctwo_start2} in the penultimate equality. Finally, for 
$m\geq k+2$, we obtain 
\begin{align*}
 \big(y^2\,\partial_y^2 & + s(1-s)\big) \fuPsi(m,0,y,s)
 \\
 & = \ctwo(m-1+2s,s)\,2\,\left\lfloor\frac{m}{2}\right\rfloor\left(2s-1+ 
2\,\left\lfloor\frac{m}{2}\right\rfloor\right)\, 
y^{s+2\,\lfloor\frac{m}{2}\rfloor}\,\log y 
 \\
 & \qquad + \ctwo(m-1+2s,s)\left( 
2s-1+4\,\left\lfloor\frac{m}{2}\right\rfloor\right)\,y^{s+2\,\lfloor\frac{m}{2}
\rfloor}
 \\
 & \qquad + 
\cthree(m-1+2s,s)\,2\,\left\lfloor\frac{m}{2}\right\rfloor\left(2s-1+ 
2\,\left\lfloor\frac{m}{2}\right\rfloor\right) 
\,y^{s+2\,\lfloor\frac{m}{2}\rfloor}\,.
\end{align*}
An application of Lemma~\ref{lem:recur_ctwo_cthree} finishes the proof.
\end{proof}

\subsection{Space of all solutions}\label{sec:all_zero}

In this section we construct from the particular solutions in the previous 
section the $2d$-dimensional space of all solutions of the differential 
equation~\eqref{eq:masterdiff_zero}. See Proposition~\ref{prop:basis_sol_zero}. 
This establishes  Theorem~\ref{Fourier:main_theorem} for the case~$n=0$ and 
$\lambda=1$ or, equivalently, $\alpha_n=0$. We start by showing an independence 
result (Lemma~\ref{lem:IK_zero_indep}) which will be crucial for the proof of 
Proposition~\ref{prop:basis_sol_zero}.

\begin{lemma}\label{lem:IK_zero_indep}
For all~$m\in\N_0$ and all~$s\in\C$, the two functions~$\fu(m,0,\cdot,s)$ and 
$\futoo(m,0,\cdot,s)$ are linearly independent (over~$\C$). 
\end{lemma}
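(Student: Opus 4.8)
The plan is to read off from Definition~\ref{defazero} which powers of~$y$ and which terms~$y^{a}\log y$ occur in $\fu(m,0,\cdot,s)$ and in $\futoo(m,0,\cdot,s)$, and then to check that the governing coefficients never vanish. Set $a\coloneqq s+2\lfloor m/2\rfloor$ and $b\coloneqq (1-s)+2\lfloor m/2\rfloor$. Then Definition~\ref{defazero} shows
\[
 \fu(m,0,\cdot,s)\in\spann_\C\bigl\{y^{a},\,y^{a}\log y\bigr\}\,,\qquad \futoo(m,0,\cdot,s)\in\spann_\C\bigl\{y^{b},\,y^{b}\log y\bigr\}\,,
\]
where a genuine $\log y$-contribution can only occur in the ``exceptional'' ranges $s\in E(d)\cap[\tfrac12-\tfrac m2,\tfrac12]$ (for $\fu$) and $s\in E(d)\cap[\tfrac32,\tfrac12+\tfrac m2]$ (for $\futoo$).

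First I would show that $\fu(m,0,\cdot,s)\neq 0$ for every $m\in\N_0$ and every $s\in\C$. Outside the exceptional range, $\fu(m,0,\cdot,s)=\cone(m,s)\,y^{a}$, and $\cone(m,s)\neq 0$: away from the forbidden set~\eqref{eq:cone_restr} formula~\eqref{def_cof_conc} exhibits $\cone(m,s)$ as a nonzero rational number, and on the forbidden set $\cone(m,\cdot)$ was defined to be~$1$. In the exceptional range, $s=\tfrac12-j$ for some $j\in\N_0$ with $j\le\lfloor m/2\rfloor$ (forced by $s\in E(d)$ and $s\le\tfrac12$), so $k\coloneqq m-1+2s=m-2j\ge 0$, and I claim the leading coefficient $\ctwo(k,s)$ is nonzero; since $\fu(m,0,\cdot,s)$ then has a nonzero $y^{a}\log y$-term, it is nonzero. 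To see $\ctwo(k,s)\neq0$, iterate the recursion~\eqref{defctwo}: with $s=\tfrac12-j$ every rational factor produced is finite and strictly nonzero for the arguments that occur, so $\ctwo(k,s)$ is a nonzero multiple of $\ctwo(0,s)$ or of $\ctwo(1,s)$; and $\ctwo(0,\tfrac12)=\ctwo(1,\tfrac12)=1$, while for $j\ge 1$ the base values~\eqref{eq:ctwo_start2} are products of the nonzero numbers $2s$, $2s-2$ with values of~$\cone$, all of which are nonzero for the same reason as above. For $\futoo$ I would then invoke the identity $\futoo(m,0,\cdot,s)=\fu(m,0,\cdot,1-s)$ (valid for $s\neq\tfrac12$ and recorded in the proof of Proposition~\ref{prop:zero_nonexc}) to get $\futoo(m,0,\cdot,s)\neq 0$ for $s\neq\tfrac12$, while $\futoo(m,0,\cdot,\tfrac12)=\cone(m,\tfrac12)\,y^{1/2+2\lfloor m/2\rfloor}\neq 0$ directly.

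With non-vanishing in hand, linear independence is immediate. If $s\neq\tfrac12$, then $a\neq b$, and the four functions $y^{a},y^{a}\log y,y^{b},y^{b}\log y$ are linearly independent on~$\R_{>0}$; hence the subspaces $\spann_\C\{y^{a},y^{a}\log y\}$ and $\spann_\C\{y^{b},y^{b}\log y\}$ meet only in~$0$, so the two nonzero functions $\fu(m,0,\cdot,s)$ and $\futoo(m,0,\cdot,s)$, which lie in these respective subspaces, cannot be scalar multiples of one another and are therefore linearly independent. If $s=\tfrac12$, then $a=b=\tfrac12+2\lfloor m/2\rfloor$; here $\tfrac12\in E(d)\cap[\tfrac12-\tfrac m2,\tfrac12]$, so $\fu(m,0,\cdot,\tfrac12)=\ctwo(m,\tfrac12)\,y^{a}\log y+\cthree(m,\tfrac12)\,y^{a}$ with $\ctwo(m,\tfrac12)\neq 0$ by the previous paragraph, whereas $\tfrac12\notin[\tfrac32,\tfrac12+\tfrac m2]$ forces $\futoo(m,0,\cdot,\tfrac12)=\cone(m,\tfrac12)\,y^{a}$, which carries no $\log y$-term; comparing the $y^{a}\log y$-components in a hypothetical relation $c\,\fu(m,0,\cdot,\tfrac12)+d\,\futoo(m,0,\cdot,\tfrac12)=0$ gives $c=0$, and then $d=0$ since $\futoo(m,0,\cdot,\tfrac12)\neq0$.

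The only step that is not routine is the non-vanishing of $\ctwo$ in the exceptional range: one must run the recursion~\eqref{defctwo} together with the base values~\eqref{eq:ctwo_start1}--\eqref{eq:ctwo_start2} and keep careful track of which $\cone$-arguments fall into the forbidden set~\eqref{eq:cone_restr}, where $\cone$ has been reset to~$1$, so as to be certain that no spurious zero is introduced along the way. Everything else rests on the elementary fact that $\{y^{a},y^{a}\log y,y^{b},y^{b}\log y\}$ is linearly independent when $a\ne b$ (and that $\{y^{a},y^{a}\log y\}$ is linearly independent).
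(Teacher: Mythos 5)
Your argument is correct and is essentially the paper's own: distinguish whether $s=\tfrac12$ (the only value where the exponents $s+2\lfloor m/2\rfloor$ and $1-s+2\lfloor m/2\rfloor$ coincide) and note that in that case the $y^a\log y$-term on the $\fu$-side, absent from the $\futoo$-side, still forces independence. The paper merely asserts the nonvanishing of the coefficients produced by Definition~\ref{defazero}; you correctly isolate the nonvanishing of $\ctwo$ in the exceptional range as the one nontrivial check, and that check is indeed what is needed when $s=\tfrac12$.
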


\begin{proof}
We need to distinguish three cases, depending on the value of~$s$. If 
\[
 s\in E(d)\cap \left[ \frac12-\frac{m}2, \frac12\right]\,,
\]
then $\fu(m,0,y,s)$ is a certain linear combination with nonzero coefficients of 
the functions $y^{s+2\left\lfloor\frac{m}{2}\right\rfloor}\log y$ and 
$y^{s+2\left\lfloor\frac{m}{2}\right\rfloor}$ and $\futoo(m,0,y,s)$ is a nonzero 
 scaling of~$y^{1-s+2\left\lfloor\frac{m}{2}\right\rfloor}$. See 
Section~\ref{sec:IandKpart2}. Therefore, in this case, the 
functions~$\fu(m,0,\cdot,s)$ and~$\futoo(m,0,\cdot,s)$ are linearly independent 
as can be concluded from their different growth behaviors. If 
\[
 s\in E(d)\cap \left[ \frac32, \frac12+\frac{m}2 \right]\,,
\]
then we have an analogous situation (with the roles of~$\fu$ and~$\futoo$ 
interchanged). Thus, also in this case, $\fu(m,0,\cdot,s)$ 
and~$\futoo(m,0,\cdot, s)$ are linearly independent. For all other values 
of~$s$, the function~$\fu(m,0,\cdot,s)$ is a nonzero scaling 
of~$y^{s+2\left\lfloor\frac{m}{2}\right\rfloor}$ and $\futoo(m,0,\cdot,s)$ is a 
nonzero scaling of~$y^{1-s+2\left\lfloor\frac{m}{2}\right\rfloor}$. Comparing 
the growth behaviors of these functions, we deduce that they are linearly 
dependent if and only if the two exponents are equal, thus, if $s=1/2$. However, 
 $1/2$ is not among the values $s$ may assume. Thus, also for this range of 
values for~$s$, the functions~$\fu(m,0,\cdot,s)$ and~$\futoo(m,0,\cdot,s)$ are 
linearly independent.
\end{proof}

\begin{prop}\label{prop:basis_sol_zero}
Let $s\in\C$ and define the maps $\PI,\PK\colon\R_{>0}\to V$ by
\[
\PI \coloneqq  
\begin{pmatrix}
\fu(d-1,0,\cdot,s) 
\\
\vdots 
\\
\fu(1,0,\cdot,s) 
\\
\fu(0,0,\cdot,s)
\end{pmatrix}
\qquad\text{and}\qquad
\PK \coloneqq   
\begin{pmatrix}
\futoo(d-1,0,\cdot,s) 
\\
\vdots 
\\
\futoo(1,0,\cdot,s) 
\\
\futoo(0,0,\cdot,s)
\end{pmatrix}\,.
\]
Then the $2d$-dimensional space of solutions of the second order differential 
equation
\begin{equation}\label{eq:solODE_zero}
 \bigl( y^2\partial_y^2 + s(1-s)\bigr)w(y) = y^2H(0)w(y)\,,\quad y\in\R_{>0}\,,
\end{equation}
is 
\begin{equation*}\label{eq:solspace_zero}
 L_0 \coloneqq \left\{ C\PI + D\PK \ : \ C,D \in \Coeff_{0;d} \right\}\,,
\end{equation*}
with $\Coeff_{0;d}$ as defined in Section~\ref{sec:coeffmatrices}.
\end{prop}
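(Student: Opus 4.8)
The plan is to derive the identity $L_0 = L$ (with $L$ the space of solutions of \eqref{eq:solODE_zero}) from three ingredients: a dimension count, the particular solutions of Proposition~\ref{prop:zero_nonexc}, and the linear‑algebraic structure of $\Coeff_{0;d}$. For the dimension count, divide \eqref{eq:solODE_zero} by $y^2$ to rewrite it as $w'' = \bigl(\tfrac{s(s-1)}{y^2}\id + H(0)\bigr)w$, a linear system of $d$ second‑order ordinary differential equations with coefficient matrix real‑analytic on $\R_{>0}$; by the standard existence‑and‑uniqueness theory, $w\mapsto\bigl(w(y_0),w'(y_0)\bigr)$ at any fixed $y_0>0$ is a linear isomorphism $L\to\C^d\times\C^d$, so $\dim_\C L = 2d$.

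Next I would establish $L_0\subseteq L$. By Proposition~\ref{prop:zero_nonexc} we have $\PI,\PK\in L$, and since $L$ is a linear space it suffices to check $C\PI\in L$ and $D\PK\in L$ for every $C,D\in\Coeff_{0;d}$; this is not automatic, as $L$ is not invariant under arbitrary left multiplication, so the precise shape of $\Coeff_{0;d}$ enters here. Fix $C=(c_{ij})\in\Coeff_{0;d}$. The $i$‑th component of $C\PI$ is $\sum_j c_{ij}\,\fu(d-j,0,\cdot,s)$; applying $y^2\partial_y^2+s(1-s)$ and using the recursion from the proof of Proposition~\ref{prop:zero_nonexc}, namely $\bigl(y^2\partial_y^2+s(1-s)\bigr)\fu(m,0,y,s)=m(1-m)y^2\fu(m-2,0,y,s)$ (cf. \eqref{eq:ODE_single}), turns it into $-y^2\sum_j c_{ij}(d-j)(d-j-1)\fu(d-j-2,0,y,s)$. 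Since $H(0)$ carries only the entries $H(0)_{i,i+2}=-(d-i)(d-i-1)$, the $i$‑th component of $y^2H(0)(C\PI)$ equals $-(d-i)(d-i-1)y^2\sum_j c_{i+2,j}\fu(d-j,0,y,s)$. After the substitution $j\mapsto j+2$ in the first sum the two expressions agree thanks to the defining relations of $\Coeff_{0;d}$: the product relation for a shift by $2$ reads $c_{i+2,j+2}=\frac{(d-j)(d-j-1)}{(d-i)(d-i-1)}c_{ij}$, the prescribed vanishing of entries handles the columns with no matching term on the left, and the degeneration $(d-i)(d-i-1)=0$ for $i\in\{d-1,d\}$ forces those components of $C\PI$ to solve the homogeneous scalar equation. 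The argument for $D\PK$ is identical with $\futoo$ in place of $\fu$.

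To upgrade this inclusion to equality it remains, since $\dim_\C L=2d$, to produce $2d$ linearly independent elements of $L_0$, equivalently to show every $w\in L$ lies in $L_0$. For the latter I would solve \eqref{eq:solODE_zero} iteratively, exploiting that $H(0)$ couples component $i$ only to $i+2$: the equations for $i\in\{d-1,d\}$ are $\bigl(y^2\partial_y^2+s(1-s)\bigr)w_i=0$, whose two‑dimensional solution space is spanned by $\fu(0,0,\cdot,s)$ and $\futoo(0,0,\cdot,s)$, equivalently by $\fu(1,0,\cdot,s),\futoo(1,0,\cdot,s)$, by Lemma~\ref{lem:IK_zero_indep}; and for $i\le d-2$, once $w_{i+2}$ is written as a $\C$‑linear combination of $\fu(d-i-2,0,\cdot,s),\futoo(d-i-2,0,\cdot,s)$ and the lower‑index functions produced earlier, the recursion \eqref{eq:ODE_single} provides a particular solution for $w_i$ of the same form with index raised by $2$, to which an arbitrary homogeneous term may be added. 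Tracking the coefficients that appear reproduces exactly the relations defining $\Coeff_{0;d}$ (whose entries are fixed by the first two rows, matching the dimension recorded in Section~\ref{sec:coeffmatrices}), so $w=C\PI+D\PK$ for suitable $C,D$; the linear independence of the $\fu$‑built part and the $\futoo$‑built part, again via Lemma~\ref{lem:IK_zero_indep}, confirms that all $2d$ dimensions are realized. Combined with the two previous steps this yields $L_0=L$.

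I expect the main difficulty to lie in the last step: running the cascade and matching the coefficients that surface against the rather intricate relations defining $\Coeff_{0;d}$, while keeping track of the exceptional spectral parameters $s\in E(d)$ at which $\fu(m,0,\cdot,s)$ and $\futoo(m,0,\cdot,s)$ acquire logarithmic factors. Definition~\ref{defazero} and Proposition~\ref{prop:zero_nonexc} are arranged so that these logarithmic cases require no separate treatment, but the coefficient identities must still be verified uniformly in $s$.
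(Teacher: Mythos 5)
Your first two ingredients coincide with the paper's. The dimension count $\dim_\C L = 2d$ is the same (the paper phrases it as ``$L_0$ is a space of solutions of a $d$-dimensional system of second-order linear ODEs, so $\dim L_0 \leq 2d$''). Your component-by-component verification that $C\PI$ solves \eqref{eq:solODE_zero} for $C\in\Coeff_{0;d}$ is, after the substitution $j\mapsto j+2$, exactly the calculation behind the paper's assertion that every matrix in $\Coeff_{0;d}$ commutes with $H(0)$; the shift relation $c_{i+2,j+2}=c_{ij}(d-j)(d-j-1)/\bigl((d-i)(d-i-1)\bigr)$ and the vanishing constraints are precisely what make the two sides match, and your remark about the degenerate rows $i\in\{d-1,d\}$ handles the boundary terms correctly.

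Where you genuinely depart from the paper is in the final step. You choose to show $L\subseteq L_0$ by running the cascade from the bottom two rows upward, producing a particular solution at each level via the recursion \eqref{eq:ODE_single} and adding a two-dimensional homogeneous part, and then arguing that the $2d$ coefficients so obtained satisfy exactly the $\Coeff_{0;d}$ relations. The paper instead exhibits $2d$ specific elements $C_1\PI,\ldots,C_d\PI,C_1\PK,\ldots,C_d\PK$ of $L_0$, where each $C_j\in\Coeff_{0;d}$ is pinned down by $c^{(j)}_{jd}=1$ and vanishing of as many other entries as the defining relations permit, and then proves their linear independence by a short induction on the bottom vector entry (which isolates $\mu_d\fu(0,0,\cdot,s)+\nu_d\futoo(0,0,\cdot,s)=0$ and invokes Lemma~\ref{lem:IK_zero_indep}). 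Both routes are valid given $L_0\subseteq L$ and $\dim L=2d$, but the paper's choice of $C_j$ is designed so that the independence check localizes to a single vector entry at a time and never has to confront the fact that, for generic $s$, $\fu(m,0,\cdot,s)$ and $\fu(m+1,0,\cdot,s)$ are scalar multiples of each other; your cascade would have to navigate around exactly that kind of degeneracy (plus the logarithmic variants at $s\in E(d)$) when matching coefficients against $\Coeff_{0;d}$, which is the extra work you yourself flag. The cascade reconstruction is not wrong, but it is substantially heavier bookkeeping than the paper's argument, and you should either carry it out in full or switch to the paper's ``produce $2d$ independent elements'' alternative, which you mention but do not pursue.
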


\begin{proof}
The maps~$\PI$ and~$\PK$ are solutions of~\eqref{eq:solODE_zero} by 
Proposition~\ref{prop:zero_nonexc}. Further, each matrix in~$\Coeff_{0;d}$ 
commutes with~$H(0)$ (and indeed, $\Coeff_{0;d}$ is the full centralizer 
of~$H(0)$) as can be shown by a straightforward, but tedious calculation.  
Therefore, each element in~$L_0$ is indeed a solution 
of~\eqref{eq:solODE_zero}. 
In what follows we will show that these are all solutions. As $L_0$ is a space 
of solutions of a $d$-dimensional system of second order linear differential 
equations, the (complex) vector space dimension of~$L_0$ is at most~$2d$. We 
will now find $2d$ linearly independent elements in~$L_0$, which then 
immediately implies that $L_0$ is $2d$-dimensional and hence the full space of 
solutions of~\eqref{eq:solODE_zero}. To that end we consider the $d$ elements 
$C_1,\ldots, C_d\in\Coeff_{0;d}$, where $C_j = (c^{(j)}_{mn})$ for 
$j\in\{1,\ldots, d\}$ is the matrix with 
\[
 c^{(j)}_{jd} = 1
\]
and as many other entries as possible equal to~$0$. The definition 
of~$\Coeff_{0;d}$ in Section~\ref{sec:coeffmatrices} shows that in particular 
\begin{equation}\label{eq:entryvan1}
 c^{(j)}_{md} = 0 \quad\text{for $m>j$}
\end{equation}
as well as
\begin{equation}\label{eq:entryvan2}
 c^{(j)}_{jn} = 0 \quad\text{for $n\not=d$\,.}
\end{equation}
We now show that the $2d$ elements 
\begin{equation}\label{eq:elem_nonzero}
 C_1\PI\,,\ \ldots\,,\ C_d\PI\,,\ C_1\PK\,,\ \ldots\,,\ C_d\PK
\end{equation}
of~$L_0$ are linearly independent (over~$\C$) by showing that any vanishing 
linear combination 
\begin{equation}\label{eq:lincomb_zero}
 \sum_{j=1}^d \mu_j C_j\PI + \nu_j C_j\PK = 0
\end{equation}
with $\mu_j,\nu_j\in\C$ is necessarily trivial (i.e., $\mu_j=0=\nu_j$ for all 
$j\in\{1,\ldots, d\}$). The bottom vector entry of the linear 
combination~\eqref{eq:lincomb_zero} is 
\[
 \mu_d \fu(0,0,\cdot,s) + \nu_d \futoo(0,0,\cdot,s) = 0\,.
\]
None of the other summands contribute to the bottom entry as follows immediately 
from~\eqref{eq:entryvan1} and~\eqref{eq:entryvan2}. Since $\fu(0,0,\cdot,s)$ 
and~$\futoo(0,0,\cdot,s)$ are linearly independent by 
Lemma~\ref{lem:IK_zero_indep}, we obtain 
\[
 \mu_d = 0 = \nu_d\,.
\]
Proceeding inductively implies the triviality of~\eqref{eq:lincomb_zero} and 
completes the proof. 
\end{proof}

\section{Asymptotic behavior of the Fourier coefficient 
functions}\label{sec:growthIK}

In Section~\ref{sec:proof_zero} we have observed that the growth properties of 
the functions~$\fu$ and~$\futoo$ for $\alpha=0$ are important for showing linear 
independence and ultimately for establishing Theorem~\ref{Fourier:main_theorem} 
in the special case of~$\alpha=\alpha_n=0$ (in the notation of 
Theorem~\ref{Fourier:main_theorem}). In Section~\ref{sec:proof_nonzero} we will 
see that also for $\alpha\not=0$ the growth properties of~$\fu$ and~$\futoo$ are 
crucial for an efficient proof of Theorem~\ref{Fourier:main_theorem}, this time 
in the case that $\alpha=\alpha_n\not=0$. 

In this section we prove growth asymptotics for the functions~$\fu$ and~$\futoo$ 
for non-vanishing second variable~$\alpha$, thereby providing a complete and 
refined version of Theorem~\ref{thm:intro_growth}, split into 
Propositions~\ref{prop:asymp_fu} and~\ref{prop:asymp_futoo}. The ``principal 
sector'' in the statement of Theorem~\ref{thm:intro_growth} refers to the case 
that $\arg\alpha\in\Omega_\omega$ in these propositions. Further, in both 
propositions, $\sqrt{2\pi\alpha}$ is evaluated with respect to the principal 
logarithm, thus 
\[
 \frac{1}{\sqrt{2\pi\alpha}} = \exp\left( - \frac 1 2 \log(2\pi \alpha; -1) 
\right)\,.
\]

\begin{prop}\label{prop:asymp_fu}  
Let $s\in\C$, $\alpha \in \C \setminus \omega \R_{\geq 0}$, and $m \in \N_{0}$. 
Depending on the sectorial location of~$\alpha$ we have the following asymptotic 
behavior of~$\fu(m,\alpha, y, s)$ as~$y \to \infty$: 
\begin{align*}
& \fu(m, \alpha, y, s) \sim
\\
& \quad 
\begin{cases}
\frac{y^m}{\iu^m \sqrt{2\pi\alpha}}\,e_+(m,\alpha,y,s)\,, 
& \arg \alpha \in \left( -\frac\pi2,  \pi \right]\cap\Omega_\omega\,, 
\\[2mm]
\frac{y^m}{\iu^m \sqrt{2\pi\alpha}}\, e_-(m,\alpha,y,s)\,,
&
\arg \alpha \in \left( - \pi, -\frac\pi2 \right]\cap\Omega_\omega\,,
\\[2mm]
\frac{y^m}{\iu^m \sqrt{2\pi\alpha}} e^{-2\pi \iu (s-\frac12) s(\omega)} 
\, e_+(m,\alpha,y,s)\,, & \arg \alpha 
\in \left( - \frac\pi2, \pi \right]\setminus\Omega_\omega\,,
\\[2mm]
\frac{y^m}{\iu^m \sqrt{2\pi\alpha}} e^{-2\pi \iu (s-\frac12) s(\omega)}\, e_-(m,\alpha,y,s)\,, 
&
\arg \alpha \in \left( -  \pi, - \frac\pi2 \right]\setminus\Omega_\omega\,,
\end{cases}
\end{align*}
where 
\begin{align*}
e_+(m,\alpha,y,s) & \coloneqq  e^{\alpha y}+ (-1)^m e^{\iu \pi s}e^{-\alpha y}\,,
\\
e_-(m,\alpha,y,s) & \coloneqq e^{\alpha y}+ (-1)^m e^{-\iu \pi s}e^{-\alpha y}
\\
\text{and}\qquad\qquad s(\omega) &\coloneqq \sgn(\arg \omega)\,.
\end{align*}
\end{prop}

\begin{proof}
We recall from~\eqref{eq:def_fu_general} that 
\[ 
\fu (m, \alpha, y, s) = \frac{\exp(\frac 1 2 \log(y; -1))}{\iu^m} 
\partial_\alpha ^m I_{s-\frac12} (\alpha y; \omega)
\] 
and note that
\[ 
\partial_\alpha ^m I_{s-\frac12} (\alpha y; \omega) = \left . y^m \partial_z^m 
I_{s-\frac12} (z; \omega) \right|_{z = \alpha y}\,. 
\] 
(We remark that here $y$ is not the imaginary part of~$z$.) We further have the 
recurrence relation
\begin{equation} \label{eq:inductive_relation_I} 
\partial_z^m I_\eta (z;\omega) = \frac{1}{2^m} \sum_{\ell = 0} ^m {m \choose 
\ell} I_{\eta -m+ 2\ell} (z; \omega)
\end{equation} 
for any~$\eta\in\C$, as can be shown by a straightforward induction argument, 
starting with 
\[
\partial_z I_\eta(z; -1) = \frac 1 2 \left( I_{\eta-1}(z; -1) + I_{\eta+1}(z; 
-1) \right)
\]
(see \cite[8.486.2]{Gradshteyn} or \cite[\S 3.71 (2), p. 79]{Watson44}) and 
using the relation between the functions~$I_\eta(\cdot;-1)$ and~$I_\eta(\cdot;\omega)$ 
from~\eqref{I_prop_def}. We therefore obtain
\begin{equation}\label{sum_fu_asymptotics} 
\fu (m, \alpha, y, s) = \frac{y^{m} \exp(\frac 1 2 \log(y; -1))}{\iu ^m} 
\frac{1}{2^m} \sum_{\ell = 0} ^m {m \choose \ell} I_{s-\frac12 -m+ 2 \ell} 
(\alpha y; \omega)\,. 
\end{equation} 
Combining the classical results on the asymptotic behavior of the modified 
Bessel function~$I$ of the first kind (see, e.g., \cite[8.451.5]{Gradshteyn} and 
\cite[10.30.4, 10.30.5]{DLMF}) with~\eqref{I_prop_def} provides us with the 
following asymptotics for $I_\eta(\alpha y;\omega)$ as $y\to\infty$:
\begin{align*}
I_\eta(\alpha y;\omega)\sim
\begin{cases}
\frac{1}{\sqrt{2\pi\alpha y}}\, e_+\bigl(0,\alpha, y, \eta+\tfrac12\bigr)\,,
& 
\arg\alpha \in\left( -\frac{\pi}{2},\pi\right]\cap\Omega_\omega\,,
\\[2mm]
\frac{1}{\sqrt{2\pi\alpha y}}\, e_-\bigl(0,\alpha, y, \eta+\tfrac12\bigr)\,,
&
\arg\alpha \in\left(-\pi, -\frac{\pi}{2}\right]\cap\Omega_\omega\,,
\\[2mm]
\frac{1}{\sqrt{2\pi\alpha y}} e^{-2\pi\iu y s(\omega)}\, e_+\bigl(0,\alpha, y, \eta+\tfrac12\bigr)\,,
&
\arg\alpha \in\left( -\frac{\pi}{2},\pi\right]\setminus\Omega_\omega\,,
\\[2mm]
\frac{1}{\sqrt{2\pi\alpha y}} e^{-2\pi\iu y s(\omega)}\, e_-\bigl(0,\alpha, y, \eta+\tfrac12\bigr)\,,
&
\arg\alpha \in\left(-\pi, -\frac{\pi}{2}\right]\setminus\Omega_\omega\,,
\end{cases}
\end{align*}
where $\sqrt{2\pi\alpha y}$ is evaluated with respect to the principal 
logarithm. Using these asymptotics in~\eqref{sum_fu_asymptotics} completes the 
proof. 
\end{proof}

\begin{prop}\label{prop:asymp_futoo} 
Let $s\in\C$, $\alpha \in \C \setminus \omega \R_{\geq 0}$, and $m \in \N_{0}$. 
Depending on the sectorial location of~$\alpha$ we have the following asymptotic 
behavior of~$\futoo(m,\alpha, y, s)$ as~$y \to \infty$: 
\begin{enumerate}[label=$\mathrm{(\roman*)}$, ref=$\mathrm{\roman*}$]
\item If $\arg \alpha  \in \Omega_\omega$, then 
\[ 
\futoo (m, \alpha, y, s)  \sim \sqrt{ \frac \pi {2\alpha}} \iu^m\, y^m 
e^{-\alpha y}\,.
\] 
\item If $\arg \alpha \in \left(-\frac{\pi}{2}, 
\pi\right]\setminus\Omega_\omega$, then 
\begin{align*}
\futoo (m, \alpha, y, s)  \sim 
&  - e^{-2\pi \iu s(\omega) s} \sqrt{ \frac \pi {2\alpha}}\iu^m\,y^m 
e^{-\alpha y}
\\
& + \frac{\sqrt{2\pi}}{\iu^{m-1} \sqrt{\alpha}}\, s(\omega) \sin(\pi s) \, 
y^m\left( e^{\alpha y} - (-1)^m e^{- \iu \pi s} e^{-\alpha y} \right)\,.
\end{align*} 
\item If $\arg \alpha  \in \left(-\pi,-\frac\pi2\right]\setminus\Omega_\omega$, 
then 
\begin{align*}
\futoo (m, \alpha, y, s)  \sim 
& - e^{-2\pi \iu s(\omega) s} \sqrt{ \frac \pi {2\alpha}}\iu^m\, y^m 
e^{-\alpha y}
\\
& + \frac{\sqrt{2\pi}}{\iu^{m-1} \sqrt{\alpha}}\, s(\omega) \sin(\pi 
s)\,y^m \left( e^{\alpha y} - (-1)^m e^{\iu \pi s} e^{-\alpha y} \right)\,.
\end{align*}
\end{enumerate}
As in Proposition~\ref{prop:asymp_fu}, we set $s(\omega) = \sgn(\arg\omega)$. 
\end{prop}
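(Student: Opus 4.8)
The plan is to mirror the proof of Proposition~\ref{prop:asymp_fu} step by step, replacing the modified Bessel function of the first kind by that of the second kind. As in that proof, I would start from the definition~\eqref{eq:def_futoo_general} and the identity $\partial_\alpha^m K_{s-\frac12}(\alpha y;\omega)=\left.y^m\partial_z^m K_{s-\frac12}(z;\omega)\right|_{z=\alpha y}$, which reduces the claim to the large-argument behavior of $\partial_z^m K_\eta(z;\omega)$ with $\eta=s-\tfrac12$ and $z=\alpha y$.

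The first genuine step is the $K$-analogue of the recurrence~\eqref{eq:inductive_relation_I}, namely
\[
 \partial_z^m K_\eta(z;\omega) = \frac{(-1)^m}{2^m}\sum_{\ell=0}^m\binom{m}{\ell}K_{\eta-m+2\ell}(z;\omega)
\]
for every $\eta\in\C$. For $\eta\notin\Z$ this drops out of the defining formula~\eqref{eq:defKnonint}: differentiating $m$ times, applying~\eqref{eq:inductive_relation_I} to $I_{-\eta}(\,\cdot\,;\omega)$ and $I_{\eta}(\,\cdot\,;\omega)$, reindexing one of the two binomial sums by $\ell\mapsto m-\ell$, and using $\sin\bigl(\pi(\eta-m+2\ell)\bigr)=(-1)^m\sin(\pi\eta)$ reassembles the result; for $\eta\in\Z$ one then passes to the limit via~\eqref{eq:defKint}. (Alternatively, one can induct directly from $\partial_z K_\eta(z;-1)=-\tfrac12\bigl(K_{\eta-1}(z;-1)+K_{\eta+1}(z;-1)\bigr)$ using~\eqref{K_prop_def1}--\eqref{K_prop_def3}; the extra $\cos(\pi\eta)\,I_{-\eta}$-terms and the factors $e^{-2\pi\iu\eta\sgn(\arg\omega)}$ present outside of $\Omega_\omega$ respect the recursion because $\cos(\pi(\eta\pm1))=-\cos(\pi\eta)$ and $e^{\mp2\pi\iu\sgn(\arg\omega)}=1$.)

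Next I would feed in the large-argument asymptotics. If $\arg\alpha\in\Omega_\omega$, then $K_\nu(\alpha y;\omega)=K_\nu(\alpha y)$ by~\eqref{K_prop_def1}/\eqref{K_prop_def3}, and the classical asymptotic $K_\nu(w)\sim\sqrt{\pi/(2w)}\,e^{-w}$ — whose leading term does not depend on the index $\nu$ — makes the binomial sum collapse to $2^m$, so that $\partial_z^m K_{s-\frac12}(z;\omega)\big|_{z=\alpha y}\sim(-1)^m\sqrt{\pi/(2\alpha y)}\,e^{-\alpha y}$; multiplying by $y^{\frac12+m}/\iu^m$ and using $(-1)^m/\iu^m=\iu^m$ gives case~(i). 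If $\arg\alpha\notin\Omega_\omega$, I would substitute~\eqref{K_prop_def2}/\eqref{K_prop_def3} for each $K_{\eta-m+2\ell}(\alpha y;\omega)$ in terms of $K_{\eta-m+2\ell}(\alpha y)$ and $I_{-(\eta-m+2\ell)}(\alpha y)$; with $\eta=s-\tfrac12$ the relevant constants simplify uniformly in $\ell$, namely $\cos\bigl(\pi(s-\tfrac12-m+2\ell)\bigr)=(-1)^m\sin(\pi s)$ and $e^{-2\pi\iu(s-\frac12-m+2\ell)\sgn(\arg\omega)}=-e^{-2\pi\iu s\sgn(\arg\omega)}$. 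The $K$-contributions then reassemble, via the classical $K$-recurrence, into $-e^{-2\pi\iu s\sgn(\arg\omega)}\,\partial_z^m K_{s-\frac12}(z)$, which produces the first, exponentially decaying summand in cases~(ii) and~(iii); the $I$-contributions reassemble into $2\pi\iu\sgn(\arg\omega)\sin(\pi s)\,\partial_z^m I_{\frac12-s}(z)$, and feeding in the same two $I$-asymptotics already used in the proof of Proposition~\ref{prop:asymp_fu} (the growing part is index-independent, the $e^{-\alpha y}$-part is uniform in $\ell$) yields exactly the split between $\arg\alpha\in(-\tfrac\pi2,\pi]\setminus\Omega_\omega$ and $\arg\alpha\in(-\pi,-\tfrac\pi2]\setminus\Omega_\omega$, i.e.\ between the $e^{-\iu\pi s}$ and $e^{\iu\pi s}$ in the decaying part of the second summand. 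Collecting the factors $\sqrt{2\pi}$, $\iu/\iu^m=\iu^{-(m-1)}$, etc., gives the stated formulas.

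I expect the only real obstacle to be bookkeeping rather than ideas: checking that the $\omega$-dependent correction terms in~\eqref{K_prop_def2}/\eqref{K_prop_def3} are compatible with the differentiation recurrence, and then keeping the phase factors $e^{\pm\iu\pi s}$, $e^{-2\pi\iu s\sgn(\arg\omega)}$ and the powers of $\iu$ straight while re-summing the binomial sums. Everything else is a direct transcription of the argument for Proposition~\ref{prop:asymp_fu}.
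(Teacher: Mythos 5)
Your proposal is correct and follows essentially the same route as the paper: definition of $\futoo$, reduction to $\partial_z^m K_{s-1/2}(z;\omega)$ via the chain rule, the recurrence $\partial_z^m K_\eta(z;\omega)=\tfrac{(-1)^m}{2^m}\sum_\ell\binom{m}{\ell}K_{\eta-m+2\ell}(z;\omega)$, then insertion of the sector-dependent large-argument asymptotics for $K_\nu(\alpha y;\omega)$ obtained from the $\omega$-relations~\eqref{K_prop_def1}--\eqref{K_prop_def3}, exploiting that the leading phase factors $e^{-2\pi\iu\eta\sgn(\arg\omega)}$, $\cos(\pi\eta)$ and $e^{(\pm(-\eta)+\frac12)\iu\pi}$ are all $\ell$-independent once $\eta=s-\tfrac12-m+2\ell$. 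The only noticeable difference is cosmetic: you offer two routes to the $K$-recurrence (either directly from~\eqref{eq:defKnonint} and the $I$-recurrence, with a limit for integer index, or by induction from $\partial_z K_\eta(z;-1)=-\tfrac12(K_{\eta-1}+K_{\eta+1})$ tracking the $\omega$-corrections), whereas the paper proves it by the latter induction; both are sound, and your observation that $\cos(\pi(\eta\pm1))=-\cos(\pi\eta)$ and $e^{\mp2\pi\iu\sgn(\arg\omega)}=1$ keep the $\omega$-corrections compatible with the recursion is exactly the point needed for that induction.
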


\begin{proof}
Analogously as in the proof of Proposition~\ref{prop:asymp_fu} we recall 
from~\eqref{eq:def_futoo_general} that
\[ 
\futoo (m, \alpha, y, s) = \frac{\exp(\frac 1 2 \log(y; -1))}{\iu^m} 
\partial_\alpha ^m K_{s-\frac12} (\alpha y; \omega)
\] 
and note that
\[ 
\partial_\alpha ^m K_{s-\frac12} (\alpha y; \omega) = \left. y^m \partial_z^m 
K_{s-\frac12} (z; \omega) \right|_{z = \alpha y}\,.
\] 
(We caution the reader that here $y$ is not the imaginary part of~$z$.) We 
further have the recurrence relation
\begin{equation} \label{eq:inductive_derivative_K} 
\partial_z^m K_\eta (z;\omega) = \frac{(-1)^m}{2^m} \sum_{\ell=0} ^m {m \choose 
\ell} K_{\eta -m + 2 \ell} (z;\omega)\qquad\text{for all~$\eta\in\C$}\,.
\end{equation} 
This follows from a straightforward induction argument using the relation 
between $K_\eta(\;\cdot\;;-1)$ and~$K_\eta(\;\cdot\;;\omega)$ 
from~\eqref{K_prop_def1}--\eqref{K_prop_def3}, the recurrence relation 
\[ 
\partial_z K_\eta (z; -1) =  - \frac 1 2 \left( K_{\eta-1} (z;-1) + K_{\eta+1} 
(z;-1) \right)
\] 
as given in~\cite[8.486.11]{Gradshteyn} (see also \cite[3.71(2)]{Watson44}) for 
$\arg z \in \Omega_\omega$, together with the recurrence relation 
\[
\partial_z I_\eta(z; -1) = \frac 1 2 \left( I_{\eta-1}(z; -1) + I_{\eta+1}(z; 
-1) \right)
\]
(see \cite[8.486.2]{Gradshteyn} or \cite[\S 3.71 (2), p. 79]{Watson44}) if $\arg 
z\not\in\Omega_\omega$. We hence obtain 
\begin{equation} \label{sum_futoo_asymptotics} 
\futoo (m, \alpha, y, s) = \frac{i^m}{2^m}\, y ^{m}\sqrt{y}
\sum_{\ell = 0} ^m {m \choose \ell} K_{s-\frac 1 2 -m + 2 \ell} (\alpha y; 
\omega)\,,
\end{equation} 
where (here) $\sqrt{y} = \exp\bigl(\tfrac 1 2 \log(y; -1)\bigr)$. 
Combining the classical results regarding the asymptotic behavior of the modified 
Bessel function~$K$ of the second type (see, e.g., \cite[8.451.6]{Gradshteyn} 
and also \cite[p. 202, \S 7.23(1)]{Watson44} and \cite[10.25.3]{DLMF}) 
with~\eqref{K_prop_def1}--\eqref{K_prop_def3} provides us with the following 
asymptotics for $K_\eta(\alpha y;\omega)$ as $y\to\infty$:
\[
K_\eta(\alpha y;\omega) \sim 
 \sqrt\frac{\pi}{2}\,\frac{e^{-\alpha y}}{\sqrt{\alpha y}}\qquad\text{if 
$\arg\alpha\in\Omega_\omega$\,,}
\] 
and, if $\arg\alpha\in \left(-\frac\pi2,\pi\right]\setminus\Omega_\omega$, 
\begin{align*}
K_\eta(\alpha y;\omega) \sim \sqrt\frac\pi2 & 
e^{-2\pi\iu\eta s(\omega)}\,\frac{e^{-\alpha y}}{\sqrt{\alpha y}}
\\
& + 2\pi\iu s(\omega)\cos(\pi\eta)\,\frac{1}{\sqrt{2\pi\alpha y}}\left( 
e^{\alpha y} + e^{\left(-\eta+\frac12\right)\iu\pi} e^{-\alpha y}\right)\,,
\end{align*}
and, if $\arg\alpha\in \left(-\pi,-\frac\pi2\right]\setminus\Omega_\omega$, 
\begin{align*}
K_\eta(\alpha y;\omega) \sim \sqrt\frac\pi2 & 
e^{-2\pi\iu\eta s(\omega)}\,\frac{e^{-\alpha y}}{\sqrt{\alpha y}}
\\
& + 2\pi\iu s(\omega)\cos(\pi\eta)\,\frac{1}{\sqrt{2\pi\alpha y}}\left( 
e^{\alpha y} + e^{-\left(-\eta+\frac12\right)\iu\pi} e^{-\alpha y}\right)\,.
\end{align*}
In each of these asymptotics, $\sqrt{\alpha y}$ is evaluated with respect to the 
principal logarithm. Combing these asymptotics 
with~\eqref{sum_futoo_asymptotics} completes the proof. 
\end{proof}

\section{Solution of the system of differential equations in Proposition~\ref{simplified_system} if $n\not=0$ or $\alpha_0\not=0$}
\label{sec:proof_nonzero}

In this section we return to the situation that~$V$ is a complex vector 
space of dimension~$d\in\N$ and that~$A$ is a linear isomorphism of~$V$ that 
acts irreducibly. As before, we fix a basis with respect to which~$A$ is 
represented by the Jordan-like block matrix $J=J(\lambda)$ with $\lambda\in\C$, 
$\lambda\not=0$, being the eigenvalue of~$A$, and we identify $V$ with~$\C^d$. 
We again fix a complex logarithm~$\log(\;\cdot\;;\omega)$ by 
choosing~$\omega\in\C$ with the properties as in Section~\ref{sec:cut}. We let 
$f\colon\h\to V\cong\C^d$ be a $J$-twist periodic Laplace eigenfunction with 
spectral parameter~$s$ and consider its Fourier expansion
\[
 f(z) = \sum_{n\in\Z} J^x \hat f_n(y) e^{2\pi\iu nx}\qquad (z=x+\iu y\in\h)\,.
\]
In Proposition~\ref{simplified_system} we have shown that the Fourier 
coefficient function~$\hat f_n$, $n\in\Z$, is a solution of the differential 
equation 
\begin{equation}\label{eq:diff_f_part}
 \left( y^2\partial_y^2 + s(1-s) - \alpha_n^2y^2\right) \stirmat^{-1}\hat f_n(y) = y^2 
H(\iu\alpha_n)\stirmat^{-1}\hat f_n(y)\,,
\end{equation}
where $y\in\R_{>0}$, 
\[
 \alpha_n = 2\pi n -\iu \log(\lambda;\omega)\,,
\]
the matrix~$\stirmat$ is the Stirling matrix from~\eqref{def:stirlingmatrix}, 
and~$H(\iu\alpha_n)$ is the two-band matrix defined in~\eqref{eq:def_H}. In 
Section~\ref{sec:proof_zero} we provided the space of solutions 
of~\eqref{eq:diff_f_part} in the case that~$\alpha_n=0$. In this section we will solve the differential equation~\eqref{eq:diff_f_part} for~$\alpha_n\not=0$. 

As in Section~\ref{sec:proof_zero}, we will see that the solutions 
of~\eqref{eq:diff_f_part} are based on the functions~$\fu$ and~$\futoo$, defined in Section~\ref{sec:IandKpart1}.
The appearance of~$\alpha_n^2$ on the left hand side of~\eqref{eq:diff_f_part} will \emph{a priori} lead to the wish to 
evaluate~$\fu$ and~$\futoo$ with~$\alpha_n$ in the second argument.
However, $\alpha_n$ might not be in $\C\setminus\omega\R_{>0}$, which is the domain for the second argument of~$\fu$ and~$\futoo$.
One way to overcome this issue is to reconsider the choice of~$\omega$.
For $\lambda=1$, however, choosing for~$\omega$ a value not in~$\R_{<0}$ would render Theorem~\ref{thm:intro_single} (Theorem~\ref{Fourier:main_theorem}) incompatible with the classical result. See~\eqref{eq:untw_exp_intro} and~\eqref{eq:untw_coeff_intro}.
We refer to Section~\ref{sec:motivation} for more discussion.
Another way to overcome this issue is to take $-\alpha_n$ as the 
second argument of~$\fu$ and~$\futoo$.
Such a sign change might seem to be rather innocent at first glance, but it influences the solution.

We will use here this second option and will first consider a more conceptual 
variant of the differential equation in~\eqref{eq:diff_f_part}.
Indeed, we will provide solutions for the (vector-valued) differential equation
\begin{equation}\label{eq:diff_concept}
 \left( y^2\partial_y^2 + s(1-s) - \alpha^2y^2\right) w(y) = y^2 
H\bigl((-1)^\eps\iu\alpha\bigr)w(y)\,,\quad y\in\R_{>0}\,,
\end{equation}
for any~$\alpha\in\C\setminus\omega\R_{>0}$, $\eps\in\{0,1\}$ and~$s\in\C$. As before, we will ultimately use $w(y) = \stirmat^{-1}\hat f_n(y)$.

After establishing the full space of solutions of~\eqref{eq:diff_concept}, we 
will show how this result and the one from Section~\ref{sec:proof_zero} combine to a proof of Theorem~\ref{Fourier:main_theorem} and briefly indicate how we may retrieve the classical, untwisted result from this theorem.

\subsection{Particular solutions}

In this section we will present two solutions of the differential 
equation~\eqref{eq:diff_concept} for any~$\alpha\in\C\setminus\omega\R_{>0}$, 
$\alpha\not=0$ and any $\eps\in\{0,1\}$.
We consider $s\in\C$ to be fixed throughout. 

For the solution of~\eqref{eq:diff_concept} we will take advantage of the fact that it is a system of differential equations that is only sparsely entangled as a result of our base change from Section~\ref{s:system}.
Indeed, the differential operator on the left hand side of~\eqref{eq:diff_concept} acts coordinate-wise, and the matrix~$H\bigl( (-1)^\eps \iu \alpha\bigr)$ on the right hand side of~\eqref{eq:diff_concept} is non-vanishing only on the first two upper diagonal bands.
Therefore the last row of~\eqref{eq:diff_concept} depends only on the last row, $w_d$, of $w=(w_i)_{i=1}^d$, and can be solved independently.
The penultimate row of~\eqref{eq:diff_concept} depends only on the last two entries of~$w$, hence on~$w_{d-1}$ and~$w_d$, and can be solved after having found the solution of the differential equation in the last row. 
In total, we may solve~\eqref{eq:diff_concept} in this ``bottom-up'' procedure. 
The final result is stated as Proposition~\ref{prop:sol_simp_all}.

We start with a series of lemmas in which we discuss some crucial relations 
between the functions~$\fu$ and~$\futoo$ for different values of their first 
argument.
In hindsight these results will be understood as the building blocks 
of the intermediate solutions of the separated rows of~\eqref{eq:diff_concept}. 
We recall from~\eqref{eq:def_fu_general} and~\eqref{eq:def_futoo_general} that 
\[
 \fu(0, \alpha, y, s) = y^{\frac12} I_{s-\frac{1}{2}} (\alpha y; \omega)\,, 
\quad \futoo(0, \alpha, y, s) = y^{\frac12} K_{s-\frac{1}{2}} (\alpha y; \omega)
\]
for all~$\alpha\in\C\setminus\omega\R_{\geq0}$ and $y\in\R_{>0}$.

\begin{lemma}\label{zeroth} 
Let  $\alpha \in \C\setminus\omega\R_{\geq 0}$. Then the two functions
$\fu(0, \alpha, \cdot, s)$ and $\futoo(0, \alpha, \cdot, s)$ form a fundamental 
system of solutions of the second order differential equation
\begin{equation}\label{eq:zeroth000}
(y^2 \pa_y^2+ s(1-s)- y^2 \alpha^2)u(y) = 0\,, \quad y \in \R_{>0}\,.
\end{equation}
\end{lemma}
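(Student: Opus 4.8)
The plan is to reduce \eqref{eq:zeroth000} to the classical modified Bessel differential equation via the substitution $u(y) = y^{1/2}w(\alpha y)$, and then to exhibit $\fu(0,\alpha,\cdot,s)$ and $\futoo(0,\alpha,\cdot,s)$ as two linearly independent solutions of the resulting second order equation, which automatically makes them a fundamental system.

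First I would record that, for every $\nu\in\C$, the functions $I_\nu(\,\cdot\,;\omega)$ and $K_\nu(\,\cdot\,;\omega)$ satisfy the modified Bessel differential equation $z^2 w''(z) + z w'(z) - (z^2+\nu^2)w(z) = 0$ on their domain of holomorphy $\C\setminus\omega\R_{\geq0}$. This follows from the relations \eqref{I_prop_def} and \eqref{K_prop_def1}--\eqref{K_prop_def3}: on the region where $\arg z\in\Omega_\omega$ and on its complement in $\C\setminus\omega\R_{\geq0}$, the function $I_\nu(\,\cdot\,;\omega)$ is a constant multiple of the classical $I_\nu$, and $K_\nu(\,\cdot\,;\omega)$ is a constant-coefficient linear combination of the classical $K_\nu$ and $I_{-\nu}$, each of which solves this homogeneous linear equation; since solving a linear ODE is a purely local property, the claim holds on all of $\C\setminus\omega\R_{\geq0}$.

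Next I would carry out the change of variables. Set $\nu\coloneqq s-\tfrac12$ and let $w$ be any solution of the modified Bessel equation of index $\nu$. A direct computation, using $w''(\alpha y) = (\alpha y)^{-2}\bigl[((\alpha y)^2+\nu^2)w(\alpha y) - \alpha y\, w'(\alpha y)\bigr]$, gives for $u(y)\coloneqq y^{1/2}w(\alpha y)$ the identity
\[
 y^2 u''(y) = \bigl(\nu^2-\tfrac14\bigr)u(y) + \alpha^2 y^2 u(y).
\]
Since $\nu^2-\tfrac14 = (s-\tfrac12)^2-\tfrac14 = -s(1-s)$, this is exactly \eqref{eq:zeroth000}. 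Applying this with $w = I_{s-1/2}(\,\cdot\,;\omega)$ and with $w = K_{s-1/2}(\,\cdot\,;\omega)$, and recalling from \eqref{eq:def_fu_general} and \eqref{eq:def_futoo_general} that $\fu(0,\alpha,y,s) = y^{1/2}I_{s-1/2}(\alpha y;\omega)$ and $\futoo(0,\alpha,y,s) = y^{1/2}K_{s-1/2}(\alpha y;\omega)$, shows that both functions solve \eqref{eq:zeroth000}.

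It then remains to verify linear independence over~$\C$. I would use the Wronskian: the product rule gives
\[
 W\bigl(\fu(0,\alpha,\cdot,s),\ \futoo(0,\alpha,\cdot,s)\bigr)(y) = \alpha y\cdot W\bigl(I_{\nu}(\,\cdot\,;\omega),\, K_{\nu}(\,\cdot\,;\omega)\bigr)(\alpha y),
\]
and the Wronskian of $I_\nu(\,\cdot\,;\omega)$ and $K_\nu(\,\cdot\,;\omega)$ equals $-1/z$ on $\C\setminus\omega\R_{\geq0}$: Abel's identity applied to the modified Bessel equation forces it to be $C/z$ for a constant~$C$ on this connected domain, and evaluating at $z=1\in\R_{>0}$ — where $I_\nu(\,\cdot\,;\omega)$ and $K_\nu(\,\cdot\,;\omega)$ agree with the classical $I_\nu,K_\nu$ in a neighbourhood by the identity theorem — pins down $C=-1$. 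Hence the Wronskian above equals $-1$, which never vanishes, so the two functions are linearly independent; as \eqref{eq:zeroth000} is a second order linear ODE with nonvanishing leading coefficient on $\R_{>0}$, its solution space is two-dimensional, and $\fu(0,\alpha,\cdot,s),\futoo(0,\alpha,\cdot,s)$ therefore form a fundamental system. (Alternatively, linear independence can be read off from the distinct $y\to\infty$ asymptotics established in Propositions~\ref{prop:asymp_fu} and~\ref{prop:asymp_futoo}.) The only delicate point — and it is a minor one — is transporting the modified Bessel equation and the standard Wronskian identity to the non-standard branch $\log(\,\cdot\,;\omega)$, which is handled entirely by the explicit relations \eqref{I_prop_def}--\eqref{K_prop_def3}.
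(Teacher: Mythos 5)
Your proof is correct and follows essentially the same route as the paper's: it reduces \eqref{eq:zeroth000} to the modified Bessel equation, transports the solution property to the adapted functions via \eqref{I_prop_def}--\eqref{K_prop_def3}, and checks linear independence. The paper compresses the change of variables and the independence verification into ``one easily shows''; you supply these explicitly, with the Wronskian/Abel argument serving as a clean substitute for what the paper leaves implicit.
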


\begin{proof}
As it is well-known, the two functions $\R_{>0}\to \C$, 
\[
 y\mapsto y^{\frac12}I_{s-\frac12}(y)\qquad\text{and}\qquad y\mapsto 
y^{\frac12}K_{s-\frac12}(y)
\]
form a fundamental system of solutions of the differential equation
\begin{equation}\label{eq:classode}
 (y^2 \pa_y^2 + s(1-s) - y^2)u(y) = 0
\end{equation}
on~$\R_{>0}$. (A proof can be deduced, e.g., 
from~\cite[Section~3.7]{Watson44}.) 
Using the relations between the modified Bessel functions~$I_{s-\frac12}$ 
and~$K_{s-\frac12}$ (using the standard complex logarithm) and their 
variants~$I_{s-\frac12}(\,\cdot\,;\omega)$ 
and~$K_{s-\frac12}(\,\cdot\,;\omega)$ 
(using the adapted choice of a complex logarithm) as discussed in 
Section~\ref{sec:Bessel} and the fact that also $y^{\frac12}I_{\frac12-s}(y)$ 
solves~\eqref{eq:classode}, one easily shows that the 
functions~$y^{\frac12}I_{s-\frac12}(\alpha y;\omega)$ 
and~$y^{\frac12}K_{s-\frac12}(\alpha y;\omega)$ are linearly independent 
solutions of~\eqref{eq:zeroth000}. This completes the proof.
\end{proof}

For the proofs of the following two lemmas we note that  
\begin{equation}\label{Phi_connection}
\iu^m \fu(m, \alpha, y, s) =  \pa_\alpha^m \fu(0, \alpha, y, s)\,, \quad \iu^m 
\futoo(m, \alpha, y, s) =  \pa_\alpha^m \futoo(0, \alpha, y, s)  
\end{equation}
for all $\alpha \in \C\setminus\omega\R_{\geq0}$ and $m \in \N_0$, as can be 
seen directly from the definitions in~\eqref{eq:def_fu_general} 
and~\eqref{eq:def_futoo_general}. We further recall that for all~$m\in\N_0$ 
and~$s\in\C$ the maps $\fu(m,\cdot,\cdot,s)$ and~$\futoo(m,\cdot,\cdot,s)$ are 
smooth as functions of two variables.

\begin{lemma}\label{oneth}
Let $\alpha \in \C\setminus\omega\R_{\geq 0}$ and $\fuPsi\in\{\fu,\futoo\}$. 
Then 
$\fuPsi(1, \alpha, \cdot , s)$ satisfies 
\[
(y^2\pa_y ^2+ s(1-s)- y^2 \alpha^2)  \fuPsi(1, \alpha, y, s) = - 2 \iu \alpha 
y^2 \fuPsi(0, \alpha, y, s)\,, \quad y \in \R_{>0}\,.
\]
\end{lemma}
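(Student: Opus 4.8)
The plan is to reduce everything to a single-variable differentiation identity applied to the fundamental solution $\fu(0,\alpha,\cdot,s)$, respectively $\futoo(0,\alpha,\cdot,s)$, of the differential equation in Lemma~\ref{zeroth}. Write $L_\alpha \coloneqq y^2\pa_y^2 + s(1-s) - y^2\alpha^2$ for the differential operator on the left-hand side (acting in~$y$, with~$\alpha$ a parameter), and let $\fuPsi(0,\alpha,\cdot,s)$ be either of the two fundamental solutions. By Lemma~\ref{zeroth} we have $L_\alpha\,\fuPsi(0,\alpha,y,s) = 0$ for all admissible~$\alpha$ and all $y\in\R_{>0}$. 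The key move is to differentiate this identity once with respect to~$\alpha$. Since $\fuPsi(0,\cdot,\cdot,s)$ is smooth in both variables (as recalled just before the lemma), the differentiation $\pa_\alpha$ commutes with $\pa_y^2$, and the only $\alpha$-dependence of $L_\alpha$ sits in the term $-y^2\alpha^2$. Hence
\[
0 = \pa_\alpha\bigl( L_\alpha\,\fuPsi(0,\alpha,y,s)\bigr) = L_\alpha\bigl(\pa_\alpha\fuPsi(0,\alpha,y,s)\bigr) + \bigl(\pa_\alpha(-y^2\alpha^2)\bigr)\,\fuPsi(0,\alpha,y,s),
\]
so that $L_\alpha\bigl(\pa_\alpha\fuPsi(0,\alpha,y,s)\bigr) = 2\alpha y^2\,\fuPsi(0,\alpha,y,s)$.

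Next I would translate this into the notation of the lemma using the relation~\eqref{Phi_connection}, namely $\pa_\alpha\fuPsi(0,\alpha,y,s) = \iu\,\fuPsi(1,\alpha,y,s)$. Substituting gives
\[
\iu\, L_\alpha\,\fuPsi(1,\alpha,y,s) = 2\alpha y^2\,\fuPsi(0,\alpha,y,s),
\]
and dividing by~$\iu$ (equivalently multiplying by~$-\iu$) yields
\[
L_\alpha\,\fuPsi(1,\alpha,y,s) = -2\iu\alpha y^2\,\fuPsi(0,\alpha,y,s),
\]
which is exactly the asserted identity once one writes $L_\alpha$ out. The statement holds uniformly for $\fuPsi\in\{\fu,\futoo\}$ because both are built from $\fuPsi(0,\alpha,\cdot,s)$ in the same way and both satisfy the hypothesis of Lemma~\ref{zeroth}; no separate argument for~$\fu$ and~$\futoo$ is needed.

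The only points requiring a word of care — and the closest thing to an obstacle — are the justification that $\pa_\alpha$ may be pulled inside $\pa_y^2$ and the observation that $\alpha$ ranges only over $\C\setminus\omega\R_{\geq 0}$, an open set, so that $\pa_\alpha$ of a holomorphic-in-$\alpha$, smooth-in-$y$ family makes sense and commutes with $y$-derivatives. Both follow from the smoothness of $\fu(m,\cdot,\cdot,s)$ and $\futoo(m,\cdot,\cdot,s)$ as functions of two variables, already recorded in the excerpt, together with the standard theorem on interchanging partial derivatives of a $C^2$ function. Everything else is the one-line computation above, so the proof is short.
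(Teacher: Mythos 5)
Your proof is correct and is essentially identical to the paper's own argument: differentiate the Lemma~\ref{zeroth} identity once in~$\alpha$, use the product rule on the $-y^2\alpha^2$ term, apply~\eqref{Phi_connection}, and rearrange. The extra remarks on interchanging $\pa_\alpha$ with $\pa_y^2$ are welcome but add nothing beyond what the paper already records just before the lemma.
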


\begin{proof}
Taking advantage of Lemma~\ref{zeroth} and of~\eqref{Phi_connection}, we find
\begin{align*}
0 & =  \pa_\alpha \left[ (y^2 \pa_y ^2 + s(1-s)- y^2 \alpha^2)  \fuPsi(0, 
\alpha, y, s) \right] \\
&   = (y^2 \pa_y ^2 + s(1-s)- y^2 \alpha^2) (\pa_\alpha \fuPsi(0, \alpha, y, 
s)) 
- 2 \alpha y^2   \fuPsi(0, \alpha, y, s) \\ 
& = (y^2 \pa_y ^2 + s(1-s)- y^2 \alpha^2)  \iu \fuPsi(1, \alpha, y, s) - 2 
\alpha y^2  \fuPsi(0, \alpha, y, s) 
\end{align*}
for all~$y\in\R_{>0}$. Re-arranging this equality completes the proof.
\end{proof}

While Lemma~\ref{oneth} discusses a relation between the functions~$\fuPsi(0,\alpha,\cdot,s)$ 
and~$\fuPsi(1,\alpha,\cdot,s)$ for $\fuPsi\in\nobreak\{\fu,\futoo\}$, the next 
lemma considers the situation for the first argument of~$\fuPsi$ being at 
least~$2$.

\begin{lemma}\label{lem:genth}
Let $\alpha \in \C\setminus\omega\R_{\geq0}$, $m\in\N$ with $m\geq 2$ and 
$\fuPsi\in\{\fu,\futoo\}$. Then the function~$\fuPsi(m, \alpha, \cdot, s)$ 
satisfies the 
identity
\begin{align*}
(y^2 \pa_y ^2  + s(1 &-s) - \alpha^2 y^2 ) \fuPsi(m, \alpha, y, s)  \\& = 
-y^2\bigl( 2 m \iu \alpha  \fuPsi(m-1,\alpha,y,s) + m(m-1)  
\fuPsi(m-2,\alpha,y,s)\bigr)
\end{align*}
for all $y\in\R_{>0}$.
\end{lemma}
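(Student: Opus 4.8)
The plan is to obtain this identity by differentiating $m$~times in the parameter~$\alpha$ the zeroth-order equation furnished by Lemma~\ref{zeroth}, in direct continuation of the argument used for the case $m=1$ in Lemma~\ref{oneth}. Fix $s\in\C$ and $\fuPsi\in\{\fu,\futoo\}$, and abbreviate by $L_\alpha\coloneqq y^2\pa_y^2 + s(1-s) - \alpha^2 y^2$ the second-order operator in~$y$ appearing on the left-hand side, viewed as depending on the parameter~$\alpha$. The only $\alpha$-dependence sits in the coefficient $-\alpha^2 y^2$, so $\pa_\alpha L_\alpha$ is multiplication by $-2\alpha y^2$, $\pa_\alpha^2 L_\alpha$ is multiplication by $-2y^2$, and $\pa_\alpha^k L_\alpha = 0$ for all $k\geq 3$. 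By Lemma~\ref{zeroth}, the map $(\alpha,y)\mapsto L_\alpha\fuPsi(0,\alpha,y,s)$ vanishes identically on $(\C\setminus\omega\R_{\geq0})\times\R_{>0}$.

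The key step is then to apply $\pa_\alpha^m$ to this identically-vanishing function and expand by the Leibniz rule. Since $\fuPsi(0,\cdot,\cdot,s)$ is smooth in both variables, the operators $\pa_\alpha$ and $\pa_y$ commute, so this is legitimate, and because $\pa_\alpha^k L_\alpha$ vanishes for $k\geq 3$ only three terms survive, giving
\[
0 = L_\alpha\bigl(\pa_\alpha^m\fuPsi(0,\alpha,y,s)\bigr) - 2m\alpha y^2\,\pa_\alpha^{m-1}\fuPsi(0,\alpha,y,s) - m(m-1)\,y^2\,\pa_\alpha^{m-2}\fuPsi(0,\alpha,y,s)\,,
\]
the coefficient $m(m-1)$ coming from $\binom{m}{2}$ times the factor $2$ in $\pa_\alpha^2 L_\alpha$. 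Rewriting each $\alpha$-derivative via $\pa_\alpha^k\fuPsi(0,\alpha,y,s) = \iu^k\fuPsi(k,\alpha,y,s)$ (equation~\eqref{Phi_connection}) and dividing through by~$\iu^m$, the identities $\iu^{-1} = -\iu$ and $\iu^{-2}=-1$ convert the two correction terms into $+2m\iu\alpha y^2\fuPsi(m-1,\alpha,y,s)$ and $+m(m-1)y^2\fuPsi(m-2,\alpha,y,s)$, which is precisely the asserted identity after moving them to the right-hand side.

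I do not expect a genuine obstacle here: the whole argument is the one-parameter-family differentiation trick already used for $m=0,1$, and the only thing demanding care is the bookkeeping of the powers of~$\iu$ and the binomial coefficient $\binom{m}{2}$; the hypothesis $m\geq2$ serves merely to ensure that both correction terms (and the function $\fuPsi(m-2,\cdot,\cdot,s)$) are meaningful. As an alternative one could argue by induction on~$m$, differentiating the identity for $\fuPsi(m-1,\cdot,\cdot,s)$ once more in~$\alpha$ and re-invoking~\eqref{Phi_connection}, but the direct computation above keeps the indices under better control.
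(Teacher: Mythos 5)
Your proof is correct and follows essentially the same route as the paper: apply $\pa_\alpha^m$ to the identity of Lemma~\ref{zeroth}, expand via the Leibniz rule (only three terms survive because the $\alpha$-dependence of the operator is quadratic), and convert the $\alpha$-derivatives of $\fuPsi(0,\alpha,y,s)$ to $\fuPsi(k,\alpha,y,s)$ via~\eqref{Phi_connection} before dividing by~$\iu^m$. The only cosmetic difference is that you phrase the Leibniz step as differentiating the operator family $L_\alpha$ while the paper applies Leibniz to the scalar product $\alpha^2\fuPsi(0,\alpha,y,s)$; the computation and bookkeeping of $\binom{m}{2}$ and the powers of $\iu$ are identical.
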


\begin{proof}
Let $y\in\R_{>0}$. Using the Leibniz rule and~\eqref{Phi_connection} we find
\begin{align*}
\pa_\alpha^m &\bigl( \alpha^2 \fuPsi(0, \alpha, y,s)  \bigr) 
\\
& = \sum_{k=0}^m \binom{m}{k} \bigl(\pa_\alpha ^k 
\alpha^2\bigr)\bigl(\pa_\alpha 
^{m-k} \fuPsi(0, \alpha, y, s) \bigr)
\\
& = \alpha^2  \pa_\alpha ^m \fuPsi(0, \alpha, y, s) + 2 m \alpha  \pa_\alpha 
^{m-1} \fuPsi(0, \alpha, y, s) 
\\
& \hphantom{\alpha^2  \pa_\alpha ^m \fuPsi(0, \alpha, y, s) + 2 m \alpha}
+ m(m-1) \pa_\alpha ^{m-2}  \fuPsi(0, \alpha, y, s)
\\
& = \iu^m \alpha^2 \fuPsi(m, \alpha, y, s) + \iu^{m-1} 2m\alpha  
\fuPsi(m-1,\alpha,y, s)
\\
& \hphantom{\fuPsi(m, \alpha, y, s)+ \iu^{m-1} 2m\alpha  } - \iu^{m} m(m-1) \fuPsi(m-2,\alpha,y, s)\,.
\end{align*}
Combining this equality and Lemma~\ref{zeroth}, we obtain 
\begin{align*}
0  & =  \pa_\alpha ^m \left[ (y^2 \partial_y^2 + s(1-s)- y^2 \alpha^2)  
\fuPsi(0, \alpha, y, s) \right]
\\
& = (y^2\pa_y ^2+ s(1-s)- \alpha^2 y^2 )  \iu^m \fuPsi(m, \alpha, y, s)  
\\
& \qquad - y^2 \left(2 m \alpha \iu^{m-1}  \fuPsi(m-1,\alpha,y, s) - 
m(m-1)\iu^{m}  \fuPsi(m-2,\alpha,y, s) \right)\,.
\end{align*}
Re-arranging (and dividing by $\iu^m$) yields the claimed identity. 
\end{proof}

We recall the sign matrix $S\in\Mat(d\times d;\C)$ from~\eqref{eq:def_S}:
\[
 S \coloneqq 
 \begin{pmatrix}
  (-1)^{d-1}
  \\
  & (-1)^{d-2} 
  \\
  & & \ddots 
  \\
  & & & -1 
  \\
  & & & & 1 
  \\
  & & & & & -1 
  \\
  & & & & & & 1
 \end{pmatrix}\,.
\]

\begin{prop}\label{prop:sol_simp_all}
Let $\alpha \in \C \setminus\omega\R_{\geq 0}$, $\eps\in\{0,1\}$ and 
$\fuPsi\in\{\fu,\futoo\}$.  Then the map $w\colon\R_{>0}\to V$, 
\[
w \coloneqq   S^\eps
\begin{pmatrix}
\fuPsi(d-1,\alpha,\cdot,s) 
\\
\vdots 
\\
\fuPsi(1,\alpha,\cdot,s) 
\\
\fuPsi(0,\alpha,\cdot,s)
\end{pmatrix}
\]
is a solution of the second order differential equation
\begin{equation}\label{eq:sysODE_w}
 \bigl( y^2\partial_y^2 + s(1-s) - \alpha^2y^2\bigr)w(y) = y^2H\bigl((-1)^\eps 
\iu\alpha\bigr)w(y)\,,\quad y\in\R_{>0}\,.
\end{equation}
\end{prop}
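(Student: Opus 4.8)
The plan is to verify the differential equation~\eqref{eq:sysODE_w} row by row, exploiting the sparse band structure of $H\bigl((-1)^\eps\iu\alpha\bigr)$ and the recurrence identities for $\fuPsi$ established in Lemmas~\ref{zeroth}, \ref{oneth} and~\ref{lem:genth}. First I would treat the case $\eps=0$, so that $S^\eps = \id$ and the second argument of all the $\fuPsi$'s is $\alpha$ itself. Writing $w=(w_i)_{i=1}^d$ with $w_i = \fuPsi(d-i,\alpha,\cdot,s)$, recall from~\eqref{eq:def_H} that the only nonzero entries of $H(\iu\alpha)$ in row~$i$ are $h_{i,i+1}(\iu\alpha) = -2\iu\alpha(d-i)$ and $h_{i,i+2}(\iu\alpha) = -2\binom{d-i}{2} = -(d-i)(d-i-1)$. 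Hence the $i$-th component of~\eqref{eq:sysODE_w} reads
\[
 \bigl(y^2\partial_y^2 + s(1-s) - \alpha^2 y^2\bigr)\fuPsi(d-i,\alpha,y,s) = -y^2\Bigl( 2\iu\alpha(d-i)\,\fuPsi(d-i-1,\alpha,y,s) + (d-i)(d-i-1)\,\fuPsi(d-i-2,\alpha,y,s)\Bigr).
\]
Setting $m\coloneqq d-i$, this is precisely the content of Lemma~\ref{lem:genth} when $m\geq2$, of Lemma~\ref{oneth} when $m=1$ (where the second term drops out because the binomial coefficient vanishes), and of Lemma~\ref{zeroth} when $m=0$ (where both terms on the right vanish). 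So the $\eps=0$ case is an immediate consequence of those three lemmas applied with the appropriate value of the first argument.

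For the case $\eps=1$ I would conjugate by $S$. Since $S$ is diagonal, the left-hand differential operator $y^2\partial_y^2 + s(1-s) - \alpha^2 y^2$ commutes with multiplication by $S$, so applying the $\eps=0$ computation to the unconjugated vector $\bigl(\fuPsi(d-i,\alpha,\cdot,s)\bigr)_i$ and then multiplying by $S$ on the left shows that $w = S v$ satisfies $\bigl(y^2\partial_y^2 + s(1-s) - \alpha^2 y^2\bigr)w = y^2\, S\, H(\iu\alpha)\, S^{-1} w$. It therefore suffices to check the conjugation identity $S\, H(\iu\alpha)\, S^{-1} = H(-\iu\alpha)$. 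Because $S = \diag\bigl((-1)^{d-1},\dots,(-1)^0\bigr)$, so that the $(i,i)$-entry is $(-1)^{d-i}$, conjugation multiplies the entry $h_{ij}$ by $(-1)^{(d-i)-(d-j)} = (-1)^{j-i}$. The band at distance $1$ (the $h_{i,i+1}$ entries) picks up a factor $(-1)^1 = -1$, turning $-2\iu\alpha(d-i)$ into $+2\iu\alpha(d-i)$, which is the corresponding entry of $H(-\iu\alpha)$; the band at distance $2$ picks up $(-1)^2 = 1$ and is unchanged, again matching $H(-\iu\alpha)$ since that entry does not involve $\alpha$. Noting that $(-1)^\eps\iu\alpha = -\iu\alpha$ for $\eps=1$, this yields~\eqref{eq:sysODE_w} in the remaining case. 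One small point to record is that $S^2 = \id$, so $S^{-1} = S = S^\eps$ for $\eps=1$, keeping the statement's $S^\eps$ consistent.

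I expect essentially no genuine obstacle here: the proof is bookkeeping, and the only place requiring a moment's care is matching the combinatorial constants. Specifically, one must confirm that $-2\binom{d-i}{2}$ as written in~\eqref{eq:def_H} equals $-(d-i)(d-i-1) = -m(m-1)$ with $m=d-i$, which is exactly the coefficient appearing in Lemma~\ref{lem:genth}, and that $-2\iu\alpha\binom{d-i}{1} = -2\iu\alpha m$ matches the coefficient $-2m\iu\alpha$ in both Lemmas~\ref{oneth} and~\ref{lem:genth}. Once these are lined up, the argument is a direct invocation of the three lemmas for the case $\eps=0$ together with the elementary sign-conjugation computation for $\eps=1$.
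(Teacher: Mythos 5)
Your proof is correct and follows essentially the paper's route: both verify the system row by row, invoking Lemmas~\ref{zeroth}, \ref{oneth} and~\ref{lem:genth} for the cases $m=0$, $m=1$ and $m\geq2$ respectively, where $m=d-i$ indexes the row. The only difference is organizational: the paper absorbs the sign by writing $w_{d-m}=(-1)^{\eps m}\fuPsi(m,\alpha,\cdot,s)$ and tracks the factors $(-1)^{\eps m}$ through each row directly, while you handle $\eps=1$ by first settling $\eps=0$ and then conjugating with $S$ via the (correctly verified) identity $S\,H(\iu\alpha)\,S^{-1}=H(-\iu\alpha)$; both versions are sound.
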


Before we provide the proof of this proposition, we briefly elaborate on 
its statement.

\begin{remark} Using the notation of Proposition~\ref{prop:sol_simp_all} we 
note that  
\[
 S \begin{pmatrix}
\fuPsi(d-1,\alpha,\cdot,s) 
\\
\vdots 
\\
\fuPsi(3,\alpha,\cdot,s) 
\\
\fuPsi(2,\alpha,\cdot,s) 
\\
\fuPsi(1,\alpha,\cdot,s) 
\\
\fuPsi(0,\alpha,\cdot,s)
\end{pmatrix}
=  
\begin{pmatrix}
(-1)^{d-1}\fuPsi(d-1,\alpha,\cdot,s) 
\\
\vdots 
\\
-\fuPsi(3,\alpha,\cdot,s) 
\\
\hphantom{-}\fuPsi(2,\alpha,\cdot,s) 
\\
-\fuPsi(1,\alpha,\cdot,s) 
\\
\hphantom{-}\fuPsi(0,\alpha,\cdot,s)
\end{pmatrix}\,.
\]
Thus, Proposition~\ref{prop:sol_simp_all} states that 
\[
\begin{pmatrix}
\fuPsi(d-1,\alpha,\cdot,s) 
\\
\vdots 
\\
\fuPsi(3,\alpha,\cdot,s) 
\\
\fuPsi(2,\alpha,\cdot,s) 
\\
\fuPsi(1,\alpha,\cdot,s) 
\\
\fuPsi(0,\alpha,\cdot,s)
\end{pmatrix}
\]
is a solution of 
\[
 \bigl( y^2\partial_y^2 + s(1-s) - \alpha^2y^2\bigr)w(y) = 
y^2H\bigl(\iu\alpha\bigr)w(y)\,,\quad y\in\R_{>0}\,,
\]
and 
\[
\begin{pmatrix}
(-1)^{d-1}\fuPsi(d-1,\alpha,\cdot,s) 
\\
\vdots 
\\
-\fuPsi(3,\alpha,\cdot,s) 
\\
\hphantom{-}\fuPsi(2,\alpha,\cdot,s) 
\\
-\fuPsi(1,\alpha,\cdot,s) 
\\
\hphantom{-}\fuPsi(0,\alpha,\cdot,s)
\end{pmatrix}
\]
is a solution of 
\[
 \bigl( y^2\partial_y^2 + s(1-s) - \alpha^2y^2\bigr)w(y) = 
y^2H\bigl(-\iu\alpha\bigr)w(y)\,,\quad y\in\R_{>0}\,.
\]
\end{remark}

\begin{proof}[Proof of Proposition~\ref{prop:sol_simp_all}]
Let $w = (w_j)_{j=1}^d$. Then 
\begin{equation}\label{eq:formula_w}
 w_{d-m}\colon\R_{>0} \to \C\,,\quad w_{d-m}(y) = (-1)^{\eps 
m}\fuPsi(m,\alpha,y,s)
\end{equation}
for each~$m\in\{0,\ldots, d-1\}$. To show that $w$ is a solution of the system 
of differential equations~\eqref{eq:sysODE_w}, we start to check this for the 
last row (i.e., for~$w_{d}$ or, equivalently, for~$m=0$) and then proceed 
iteratively up to the first row (i.e., to~$w_1$ or, equivalently, to~$m=d-1$). 

The last row of~\eqref{eq:sysODE_w}, thus the equation for~$w_d$, reads
\begin{equation}\label{eq:lastrow}
 \left( y^2\partial_y^2 + s(1-s) - \alpha^2y^2\right)w_d(y) = 0
\end{equation}
for all~$y\in\R_{>0}$. Since $w_d = \fuPsi(0,\alpha,\cdot,s)$, 
Lemma~\ref{zeroth} shows that the equality~\eqref{eq:lastrow} is satisfied. 

The penultimate row of~\eqref{eq:sysODE_w}, thus the equation for~$w_{d-1}$, is
\begin{equation}\label{eq:rowm1}
 \left( y^2\partial_y^2 + s(1-s) - \alpha^2y^2\right)w_{d-1}(y) = 2 
(-1)^{\eps+1}\iu 
\alpha y^2 w_d(y)
\end{equation}
for all~$y\in\R_{>0}$. Since $w_{d-1} = (-1)^\eps\fuPsi(1,\alpha,\cdot,s)$ 
and~$w_d=\fuPsi(0,\alpha,\cdot, s)$, Lemma~\ref{oneth} yields 
that~\eqref{eq:rowm1} is valid.

For row~$d-m$ of~\eqref{eq:sysODE_w} with $m\in\{2,\ldots, d-1\}$ we need to 
establish 
\begin{align}\label{eq:rowgen}
\begin{aligned}
\big( y^2\partial_y^2 &  + s(1 - s)  - \alpha^2y^2\big)w_{d-m}(y) 
\\
& = 2 (-1)^{\eps+1}\iu \alpha y^2 m w_{d-(m-1)}(y) - m(m-1)y^2w_{d-(m-2)}(y)
\end{aligned}
\end{align}
for all~$y\in\R_{>0}$. From Lemma~\ref{lem:genth} it follows that 
\begin{align*}
\big( y^2\partial_y^2 & + s(1  -s)  - \alpha^2y^2\big)(-1)^{\eps 
m}\fuPsi(m,\alpha,y,s)
\\
& = 2 (-1)^{\eps m  + 1}\iu \alpha y^2 m \fuPsi(m-1,\alpha,y,s) 
\\
& \qquad\qquad - (-1)^{\eps m} 
m(m-1) y^2 \fuPsi(m-2,\alpha, y,s)
\\
& = 2 (-1)^{\eps + 1} \iu \alpha y^2 m (-1)^{\eps(m-1)}\fuPsi(m-1,\alpha, y,s) 
\\
& \qquad\qquad - m(m-1)y^2(-1)^{\eps(m-2)}\fuPsi(m-2,\alpha,y,s)
\end{align*}
for all~$y\in\R_{>0}$. Using the formulas in~\eqref{eq:formula_w} for~$w$ 
completes the proof that \eqref{eq:rowgen} is satisfied.
\end{proof}

\subsection{Space of all solutions}

With these preparations and the asymptotics for~$\fu$ and~$\futoo$ from 
Section~\ref{sec:growthIK} we are now able to present the full space of 
solutions of~\eqref{eq:diff_concept}.

\begin{prop}\label{prop:basis_sol_nonzero}
Let $\alpha \in \C \setminus\omega\R_{\geq 0}$, $\eps\in\{0,1\}$, $s\in\C$ and 
define the maps $\PI,\PK\colon\R_{>0}\to V$ by
\[
\PI \coloneqq   S^\eps
\begin{pmatrix}
\fu(d-1,\alpha,\cdot,s) 
\\
\vdots 
\\
\fu(1,\alpha,\cdot,s) 
\\
\fu(0,\alpha,\cdot,s)
\end{pmatrix}
\qquad\text{and}\qquad
\PK \coloneqq   S^\eps
\begin{pmatrix}
\futoo(d-1,\alpha,\cdot,s) 
\\
\vdots 
\\
\futoo(1,\alpha,\cdot,s) 
\\
\futoo(0,\alpha,\cdot,s)
\end{pmatrix}\,.
\]
Then the $2d$-dimensional space of solutions of the second order differential 
equation
\begin{equation}\label{eq:solODE_all}
 \bigl( y^2\partial_y^2 + s(1-s) - \alpha^2y^2\bigr)w(y) = y^2H\bigl((-1)^\eps 
\iu\alpha\bigr)w(y)\,,\quad y\in\R_{>0}\,,
\end{equation}
is 
\begin{equation*}\label{eq:solspace_nonzero}
 L_1 \coloneqq \left\{ C\PI + D\PK \ : \ C,D \in \Coeff_{1;d} \right\}\,,
\end{equation*}
with $\Coeff_{1;d}$ defined as in Section~\ref{sec:coeffmatrices}.
\end{prop}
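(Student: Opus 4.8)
The plan is to establish the two inclusions $L_1\subseteq\{\text{solutions}\}$ and $\{\text{solutions}\}\subseteq L_1$, combined with a dimension count. For the first inclusion, I would start from Proposition~\ref{prop:sol_simp_all}, which tells us that $\PI$ and $\PK$ are themselves solutions of~\eqref{eq:solODE_all}. Then I need to check that for every $C\in\Coeff_{1;d}$, the map $C\PI$ is again a solution; by linearity of the differential operator it suffices to verify that $C$ commutes with $H\bigl((-1)^\eps\iu\alpha\bigr)$. Since the left-hand operator $y^2\partial_y^2+s(1-s)-\alpha^2y^2$ acts coordinate-wise (i.e.\ as a scalar operator on each component), it commutes with every constant matrix; so the only thing to verify is $CH = HC$ for $H = H\bigl((-1)^\eps\iu\alpha\bigr)$. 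Recalling from~\eqref{eq:def_H} that $H(\beta)$ is a two-band matrix with entries $-2\beta\binom{d-i}{1}$ on the first superdiagonal and $-2\binom{d-i}{2}$ on the second superdiagonal, the commutation relation $CH=HC$ translates into a system of linear recurrences on the entries $c_{ij}$, and I expect these to be exactly the defining relations of $\Coeff_{1;d}$ from Section~\ref{sec:coeffmatrices} (up to the already-imposed upper-triangularity $c_{ij}=0$ for $i>j$). This is a direct but somewhat tedious computation, analogous to the commutation-with-$H(0)$ claim invoked in the proof of Proposition~\ref{prop:basis_sol_zero}; I would carry it out by comparing the $(i,j)$-entries of $CH$ and $HC$ and reading off the resulting two-term recursion $c_{i+1,j+1}=c_{ij}\cdot\frac{d-j}{d-i}$ together with the vanishing of $c_{i,i+1}$-type obstructions, which one checks matches the stated formula $c_{i,i+p}=c_{1,1+p}\prod_{k=1}^{i-1}(d-p-k)/\prod_{k=1}^{i-1}(d-k)$.

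For the reverse inclusion and the dimension assertion, the strategy is the same as in Proposition~\ref{prop:basis_sol_zero}: the solution space of a $d$-dimensional system of second-order linear ODEs on $\R_{>0}$ has complex dimension at most $2d$, and $\Coeff_{1;d}$ is $d$-dimensional, so $L_1$ has dimension at most $2d$. It then suffices to exhibit $2d$ linearly independent elements of $L_1$, which forces $\dim L_1=2d$ and hence $L_1$ equals the full solution space. The natural candidates are $C_1\PI,\dots,C_d\PI,C_1\PK,\dots,C_d\PK$ where $C_r = (c^{(r)}_{ij})$ is the element of $\Coeff_{1;d}$ with $c^{(r)}_{1,r}=1$ and all other first-row entries (hence all other entries) determined accordingly; equivalently $C_r$ is supported on the $r$-th superdiagonal band. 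To prove linear independence of a vanishing combination $\sum_r \mu_r C_r\PI + \nu_r C_r\PK = 0$, I would look at the last ($d$-th) vector component: only $C_1$ (the identity-band matrix, $p=0$) contributes there, giving $\mu_1\fu(0,\alpha,\cdot,s)+\nu_1\futoo(0,\alpha,\cdot,s)=0$. Here the \textbf{key input} is the linear independence of $\fu(0,\alpha,\cdot,s)$ and $\futoo(0,\alpha,\cdot,s)$ over $\C$, which is immediate from Lemma~\ref{zeroth} (they form a fundamental system) — or, alternatively, from the asymptotics in Propositions~\ref{prop:asymp_fu} and~\ref{prop:asymp_futoo}, since $\fu$ grows and $\futoo$ decays as $y\to\infty$ in the principal sector. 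This yields $\mu_1=\nu_1=0$, and then one peels off components $d-1, d-2, \dots$ inductively: at component $d-p$ only $C_1,\dots,C_{p+1}$ contribute, the already-established vanishing $\mu_1=\dots=\mu_p=0=\nu_1=\dots=\nu_p$ kills all but the $C_{p+1}$-terms, and again linear independence of $\fu(0,\alpha,\cdot,s)$ and $\futoo(0,\alpha,\cdot,s)$ (the component being a nonzero scalar multiple of these) gives $\mu_{p+1}=\nu_{p+1}=0$.

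The main obstacle I anticipate is the first inclusion, specifically the bookkeeping in verifying that $\Coeff_{1;d}$ is \emph{precisely} the centralizer of $H\bigl((-1)^\eps\iu\alpha\bigr)$ within the upper-triangular matrices — one must be careful that the coefficient $(-1)^\eps\iu\alpha$ being nonzero (guaranteed since $\alpha\neq 0$ and $\iu\neq 0$) is what makes the first-superdiagonal band of $H$ nonvanishing, so that the commutation constraints from that band are non-degenerate; if that coefficient were zero one would land in the larger centralizer $\Coeff_{0;d}$ of $H(0)$, which is the $\alpha=0$ situation already handled in Section~\ref{sec:proof_zero}. A secondary subtlety is making sure the entries of $C\PI$ (and $C\PK$) are actually well-defined: since $\alpha\in\C\setminus\omega\R_{\geq0}$, each $\fu(m,\alpha,\cdot,s)$ and $\futoo(m,\alpha,\cdot,s)$ with $m\in\{0,\dots,d-1\}$ lies in the domain of $\fu,\futoo$ (the second argument is nonzero and off the branch cut), so no appeal to Definition~\ref{defazero} is needed here — this is cleaner than the $\alpha=0$ case. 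Once the centralizer computation is in hand, the rest is a routine repackaging of the argument already used for Proposition~\ref{prop:basis_sol_zero}.
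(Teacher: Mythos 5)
Your proof is correct, and the overall structure (two inclusions, dimension count, exhibiting $2d$ linearly independent solutions via band matrices in $\Coeff_{1;d}$) matches the paper. The genuine difference is how linear independence of the $2d$ candidates is established. The paper reads off the \emph{topmost} component of a vanishing combination, obtaining a relation $\sum_{j=1}^{d}\mu_j\fu(d-j,\alpha,\cdot,s)+\nu_j\futoo(d-j,\alpha,\cdot,s)=0$, and concludes in one step from the growth asymptotics of Propositions~\ref{prop:asymp_fu} and~\ref{prop:asymp_futoo}. You instead read off the \emph{bottom} component and peel upward inductively: at component $d-p$, after the previously established vanishings $\mu_1=\dots=\mu_p=\nu_1=\dots=\nu_p=0$, only the band matrix $C_{p+1}$ reaches the $d$-th position, leaving $\mu_{p+1}\,c^{(p+1)}_{d-p,d}\,\fu(0,\alpha,\cdot,s)+\nu_{p+1}\,c^{(p+1)}_{d-p,d}\,\futoo(0,\alpha,\cdot,s)=0$ with a nonzero scalar $c^{(p+1)}_{d-p,d}$, so you only ever need the linear independence of the single pair $\fu(0,\alpha,\cdot,s)$, $\futoo(0,\alpha,\cdot,s)$, which Lemma~\ref{zeroth} supplies directly (fundamental system of a scalar second-order ODE). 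Your route is precisely the one the paper uses for the $\alpha=0$ case (Proposition~\ref{prop:basis_sol_zero}, via Lemma~\ref{lem:IK_zero_indep}); it is more elementary and decouples this proposition from the asymptotic machinery of Section~\ref{sec:growthIK}, whereas the paper's top-down argument is a single-shot computation but presupposes those asymptotics. Two small notes: your phrase that ``$C_r$ is supported on the $r$-th superdiagonal band'' is off by one --- with $c^{(r)}_{1,r}=1$ the matrix $C_r$ lives on the $(r-1)$-th superdiagonal, so $C_1$ is the identity; and for the forward inclusion you only need that $\Coeff_{1;d}$ is \emph{contained in} the centralizer of $H\bigl((-1)^\eps\iu\alpha\bigr)$, not that it equals it, a point the paper flags explicitly, so the ``main obstacle'' you anticipate is smaller than you suggest.
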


\begin{proof}
By Proposition~\ref{prop:sol_simp_all}, the maps~$\PI$ and~$\PK$ are solutions 
of~\eqref{eq:solODE_all}. Further, a straightforward calculation shows that each 
 matrix in~$\Coeff_{1;d}$ commutes with~$H\bigl((-1)^\eps \iu\alpha\bigr)$. 
Therefore, each element in~$L_1$ is indeed a solution of~\eqref{eq:solODE_all}. 
(The vector space~$\Coeff_{1;d}$ is the full centralizer of~$H\bigl((-1)^\eps 
\iu\alpha\bigr)$, a fact that we will not need here.) It remains to show that 
these are all solutions. As $L_1$ is a space of solutions of a $d$-dimensional 
system of second order linear differential equations, the (complex) vector space 
dimension of~$L_1$ is at most~$2d$. In what follows we will detect $2d$ linearly 
independent elements in~$L_1$, which then immediately yields that $L_1$ is 
$2d$-dimensional and hence the full space of solutions 
of~\eqref{eq:solODE_all}. 
To that end we consider the $d$ elements $B_1,\ldots, B_d\in\Coeff_{1;d}$, where 
$B_j = (b^{(j)}_{mn})$ for $j\in\{1,\ldots, d\}$ is determined by
\[
 b^{(j)}_{1n}\coloneqq
 \begin{cases}
  1 & \text{for $n=j$}\,,
  \\
  0 & \text{for $n\not=j$}\,.
 \end{cases}
\]
We recall from Section~\ref{sec:coeffmatrices} that indeed the full matrix~$B_j$ 
is determined by this prescription. We now show that the $2d$ elements 
\begin{equation}\label{eq:elem_nonzero2}
 B_1\PI\,,\ \ldots\,,\ B_d\PI\,,\ B_1\PK\,,\ \ldots\,,\ B_d\PK
\end{equation}
of~$L_1$ are linearly independent (over~$\C$). The topmost entry of the linear 
equation
\[
 \sum_{j=1}^d \mu_j B_j\PI + \nu_j B_j\PK = 0\,,
\]
where $\mu_j,\nu_j\in\C$, reads
\[
 \sum_{j=1}^d \mu_j\fu(d-j,\alpha,\cdot, s) + \nu_j\futoo(d-j,\alpha,\cdot, s) = 
0\,.
\]
The asymptotics for $\fu$ and~$\futoo$ from Propositions~\ref{prop:asymp_fu} 
and~\ref{prop:asymp_futoo} now imply that 
\[
\mu_j=0=\nu_j
\]
for all $j\in\{1,\ldots, d\}$. This completes the proof.
\end{proof}

\subsection{Endgame}

Even though we have already indicated the proof of 
Theorem~\ref{Fourier:main_theorem} in the previous sections, we provide here the 
brief, explicit discussion for the convenience of the readers. We further 
explain why Theorem~\ref{Fourier:main_theorem} recovers the classical result in 
the untwisted setting (see Section~\ref{sec:motivation}). 

\begin{proof}[Proof of Theorem~\ref{Fourier:main_theorem}]
Proposition~\ref{simplified_system} shows that, for each~$n\in\Z$, the Fourier 
coefficient function~$\hat f_n$ satisfies the differential equation
\[
\bigr(y^2 \partial^2_y + s(1-s) - y^2\alpha_n^2 \bigl) T^{-1}  \hat f_n(y)  =  
y^2 H(\iu \alpha_n) T^{-1}   \hat f_n(y)\,,\quad y\in\R_{>0}\,,
\]
with 
\[
 \alpha_n = 2\pi n - \iu \log(\lambda;\omega)\,.
\]
If $\alpha_n=0$, then Proposition~\ref{prop:basis_sol_zero} states that $\hat 
f_n$ is of the form claimed in Theorem~\ref{Fourier:main_theorem}. For 
$\alpha_n\not=0$, we rewrite the differential equation as 
\[
 \bigr(y^2 \partial^2_y + s(1-s) - y^2\tilde\alpha_n^2 \bigl) T^{-1}  \hat 
f_n(y)  =  y^2 H((-1)^{\eps_n} \iu \tilde\alpha_n) T^{-1}   \hat f_n(y)\,,\quad 
y\in\R_{>0}\,,
\]
with (see~\eqref{eq:def_tildealpha})
\[
 \tilde\alpha_n = (-1)^{\eps_n}\alpha_n\,.
\]
This has the advantage that $\tilde\alpha_n\in\C\setminus\omega\R_{>0}$ by the 
choice of~$\eps_n$ (see~\eqref{eq:pickeps}). Then 
Proposition~\ref{prop:basis_sol_nonzero} shows that $\hat f_n$ is of the claimed 
form. 
\end{proof}

\bibliography{fourier_bib} 

\providecommand{\bysame}{\leavevmode\hbox to3em{\hrulefill}\thinspace}
\providecommand{\MR}{\relax\ifhmode\unskip\space\fi MR }
% \MRhref is called by the amsart/book/proc definition of \MR.
\providecommand{\MRhref}[2]{%
  \href{http://www.ams.org/mathscinet-getitem?mr=#1}{#2}
}
\providecommand{\href}[2]{#2}
\begin{thebibliography}{10}

\bibitem{AS2008}
M.~Abramowitz and I.~Stegun, \emph{{Handbook of mathematical functions with
  formulas, graphs, and mathematical tables}}, 10th ed., {National Bureau of
  Standards. Wiley-Interscience}, 1972.

\bibitem{MR3638318}
S.~Alexandrov, S.~Banerjee, J.~Manschot, and B.~Pioline, \emph{Multiple
  {D}3-instantons and mock modular forms {I}}, Comm. Math. Phys. \textbf{353}
  (2017), no.~1, 379--411.

\bibitem{Bantay_Gannon}
P.~Bantay and T.~Gannon, \emph{Vector-valued modular functions for the modular
  group and the hypergeometric equation}, Commun. Number Theory Phys.
  \textbf{1} (2007), no.~4, 651--680.

\bibitem{massive_deformations_of_maass_forms}
M.~Berg, K.~Bringmann, and T.~Gannon, \emph{Massive deformations of {M}aass
  forms and {J}acobi forms}, Commun. Number Theory Phys. \textbf{15} (2021),
  no.~3, 575--603.

\bibitem{BLZm}
R.~{Bruggeman}, J.~{Lewis}, and D.~{Zagier}, \emph{{Period functions for Maass
  wave forms and cohomology}}, {Mem. Am. Math. Soc.} \textbf{1118} (2015),
  iii--v + 128.

\bibitem{BP-A}
R.~{Bruggeman} and A.~{Pohl}, \emph{Eigenfunctions of transfer operators and
  automorphic forms for {H}ecke triangle groups with infinite covolume},
  arXiv:1909.11432, to appear in AMS Memoirs.

\bibitem{bytsenko2006spectral}
A.~Bytsenko, M.~Guimar\~{a}es, and F.~Williams, \emph{Remarks on the spectrum
  and truncated heat kernel of the {BTZ} black hole}, Lett. Math. Phys.
  \textbf{79} (2007), no.~2, 203--211.

\bibitem{franc2021character}
F.~Cameron and M.~Geoffrey, \emph{Character vectors of strongly regular vertex
  operator algebras}, arXiv:2111.04616.

\bibitem{Creutzig_Gannon}
T.~Creutzig and T.~Gannon, \emph{Logarithmic conformal field theory,
  log-modular tensor categories and modular forms}, J. Phys. A \textbf{50}
  (2017), no.~40, 404004, 37.

\bibitem{Daughton2016}
A.~{Daughton}, \emph{{Fourier coefficients of logarithmic vector-valued
  Poincar\'e series}}, {Ramanujan J.} \textbf{41} (2016), no.~1-3, 311--318.

\bibitem{heatkernelonAdS3}
J.~David, M.~Gaberdiel, and R.~Gopakumar, \emph{{The heat kernel on AdS(3) and
  its applications}}, JHEP \textbf{04} (2010), 125.

\bibitem{Deitmar_locally_compact}
A.~{Deitmar}, \emph{{Spectral theory for non-unitary twists}}, {Hiroshima Math.
  J.} \textbf{49} (2019), no.~2, 235--249.

\bibitem{Deitmar_Monheim_traceformula}
A.~{Deitmar} and F.~{Monheim}, \emph{{A trace formula for non-unitary
  representations of a uniform lattice}}, {Math. Z.} \textbf{284} (2016),
  no.~3-4, 1199--1210.

\bibitem{Deitmar_Monheim_eisenstein}
\bysame, \emph{{Eisenstein series with non-unitary twists}}, {J. Korean Math.
  Soc.} \textbf{55} (2018), no.~3, 507--530.

\bibitem{DLMF}
\emph{{\it NIST Digital Library of Mathematical Functions}},
  http://dlmf.nist.gov/, Release 1.1.3 of 2021-09-15.

\bibitem{Dong_Li_Mason}
C.~Dong, H.~Li, and G.~Mason, \emph{Modular-invariance of trace functions in
  orbifold theory and generalized {M}oonshine}, Comm. Math. Phys. \textbf{214}
  (2000), no.~1, 1--56.

\bibitem{DGMS}
C.~Doran, T.~Gannon, H.~Movasati, and K.~Shokri, \emph{Automorphic forms for
  triangle groups}, Commun. Number Theory Phys. \textbf{7} (2013), no.~4,
  689--737.

\bibitem{MR3859972}
J.~Duncan and A.~O'Desky, \emph{Super vertex algebras, meromorphic {J}acobi
  forms and umbral moonshine}, J. Algebra \textbf{515} (2018), 389--407.

\bibitem{Eberlein}
P.~{Eberlein}, \emph{{Geometry of nonpositively curved manifolds}}, {The
  University of Chicago Press}, 1996.

\bibitem{Eholzer_Skoruppa}
W.~Eholzer and N.-P. Skoruppa, \emph{Modular invariance and uniqueness of
  conformal characters}, Comm. Math. Phys. \textbf{174} (1995), no.~1,
  117--136.

\bibitem{MR3692925}
A.~El~Fardi, A.~Ghanmi, and A.~Intissar, \emph{On concrete spectral properties
  of a twisted {L}aplacian associated with a central extension of the real
  {H}eisenberg group}, Adv. Math. Phys. (2017), Art. ID 7575820, 10.

\bibitem{Fedosova_nonunitary}
K.~Fedosova, \emph{The twisted {Selberg} trace formula and the twisted
  {Selberg} zeta function for compact orbifolds}, arXiv:1511.04208, to appear
  in Math.~Z.

\bibitem{FP_eisenstein}
K.~Fedosova and A.~Pohl, \emph{{E}isenstein series twisted with non-expanding
  cusp monodromies}, Ramanujan J. \textbf{51} (2020), 649--670.

\bibitem{FP_szf}
\bysame, \emph{Meromorphic continuation of {S}elberg zeta functions with twists
  having non-expanding cusp monodromy}, Sel. Math. New Ser. \textbf{26} (2020),
  no.~1, Art. 9, 55.

\bibitem{folland2009fourier}
G.~Folland, \emph{Fourier analysis and its applications}, The Wadsworth \&
  Brooks/Cole Mathematics Series, Wadsworth \& Brooks/Cole Advanced Books \&
  Software, 1992.

\bibitem{folland1995introduction}
\bysame, \emph{Introduction to partial differential equations}, 2nd ed.,
  Princeton University Press, Princeton, NJ, 1995.

\bibitem{francesco2012conformal}
P.~Francesco, P.~Mathieu, and D.~S{\'e}n{\'e}chal, \emph{Conformal field
  theory}, Springer Science \& Business Media, 2012.

\bibitem{gannon2006monstrous}
T.~Gannon, \emph{Monstrous moonshine: the first twenty-five years}, Bull. Lond.
  Math. Soc. \textbf{38} (2006), no.~1, 1--33.

\bibitem{Gannon_theory}
\bysame, \emph{The theory of vector-valued modular forms for the modular
  group}, Conformal field theory, automorphic forms and related topics,
  Contrib. Math. Comput. Sci., vol.~8, Springer, Heidelberg, 2014,
  pp.~247--286.

\bibitem{Gradshteyn}
I.~Gradshteyn and I.~Ryzhik, \emph{Table of integrals, series, and products},
  8th ed., Academic Press, 2014.

\bibitem{Eisenstein_series_for_higher-rank_groups_and_string_theory_amplitudes}
M.~Green, S.~Miller, J.~Russo, and P.~Vanhove, \emph{Eisenstein series for
  higher-rank groups and string theory amplitudes}, Commun. Number Theory Phys.
  \textbf{4} (2010), no.~3, 551--596.

\bibitem{MR3871267}
J.~Harvey and Y.~Wu, \emph{Hecke relations in rational conformal field theory},
  J. High Energy Phys. (2018), no.~9, 032, front matter+36.

\bibitem{Hejhal2}
D.~{Hejhal}, \emph{{The Selberg trace formula for \(\text{PSL}(2,\mathbb R)\).
  Vol. 2.}}, vol. 1001, Springer, 1983.

\bibitem{ImHof}
H.-C. Im~Hof, \emph{{Die Geometrie der Weylkammern in symmetrischen R{\"a}umen
  vom nichtkompakten Typ}}, 1979, {Habilitationsschrift, Universit{\"a}t Bonn}.

\bibitem{Differential_equations_in_automorphic_forms}
K.~Klinger-Logan, \emph{Differential equations in automorphic forms}, Commun.
  Number Theory Phys. \textbf{12} (2018), no.~4, 767--827.

\bibitem{Knopp_Mason_definition}
M.~{Knopp} and G.~{Mason}, \emph{{Generalized modular forms}}, {J. Number
  Theory} \textbf{99} (2003), no.~1, 1--28.

\bibitem{Knopp20032}
\bysame, \emph{On vector-valued modular forms and their {F}ourier
  coefficients}, Acta Arith. \textbf{110} (2003), no.~2, 117--124.

\bibitem{Knopp_Mason_illinois}
\bysame, \emph{{Vector-valued modular forms and Poincar\'e series}}, {Ill. J.
  Math.} \textbf{48} (2004), no.~4, 1345--1366.

\bibitem{Knopp2011}
\bysame, \emph{Logarithmic vector-valued modular forms}, Acta Arith.
  \textbf{147} (2011), no.~3, 261--262.

\bibitem{Knopp_Mason2012}
\bysame, \emph{Logarithmic vector-valued modular forms and polynomial-growth
  estimates of their {F}ourier coefficients}, Ramanujan J. \textbf{29} (2012),
  no.~1-3, 213--223.

\bibitem{Kohnen}
W.~{Kohnen}, \emph{{On certain generalized modular forms}}, {Funct.
  Approximatio, Comment. Math.} \textbf{43} (2010), no.~1, 23--29.

\bibitem{Kohnen_Martin}
W.~{Kohnen} and Y.~{Martin}, \emph{{On product expansions of generalized
  modular forms}}, {Ramanujan J.} \textbf{15} (2008), no.~1, 103--107.

\bibitem{Kohnen_Mason}
W.~{Kohnen} and G.~{Mason}, \emph{{On generalized modular forms and their
  applications}}, {Nagoya Math. J.} \textbf{192} (2008), 119--136.

\bibitem{Kohnen_Mason2}
\bysame, \emph{{On the canonical decomposition of generalized modular
  functions.}}, {Proc. Am. Math. Soc.} \textbf{140} (2012), no.~4, 1125--1132.

\bibitem{Lewis_Zagier}
J.~Lewis and D.~Zagier, \emph{Period functions for {M}aass wave forms. {I}},
  Ann. of Math. (2) \textbf{153} (2001), no.~1, 191--258.

\bibitem{MR3584189}
L.~Li and A.~Strohmaier, \emph{Heat kernel estimates for general boundary
  problems}, J. Spectr. Theory \textbf{6} (2016), no.~4, 903--919.

\bibitem{liu2021modular}
X.-G. Liu and G.-J. Ding, \emph{Modular flavor symmetry and vector-valued
  modular forms}, J. High Energy Phys. \textbf{2022} (2022), no.~3, 41, Id/No
  123.

\bibitem{MR4127499}
M.~Ludewig and E.~Rosenberger, \emph{Asymptotic eigenfunctions for
  {S}chr\"{o}dinger operators on a vector bundle}, Rev. Math. Phys. \textbf{32}
  (2020), no.~7, 2050020, 28.

\bibitem{MatsushimaMurakami}
Y.~Matsushima and S.~Murakami, \emph{On vector bundle valued harmonic forms and
  automorphic forms on symmetric {R}iemannian manifolds}, Ann. of Math. (2)
  \textbf{78} (1963), 365--416.

\bibitem{Muehlenbruch_Raji}
T.~M\"{u}hlenbruch and W.~Raji, \emph{Generalized {M}aass wave forms}, Proc.
  Amer. Math. Soc. \textbf{141} (2013), no.~4, 1143--1158.

\bibitem{Mueller_STF}
W.~M{\"{u}}ller, \emph{{A {S}elberg trace formula for non-unitary twists}},
  {Int. Math. Res. Not.} (2011), no.~9, 2068--2109.

\bibitem{Phillips_perturb_twist}
R.~Phillips, \emph{Perturbation theory for twisted automorphic functions},
  Geom. Funct. Anal. \textbf{7} (1997), no.~1, 120--144.

\bibitem{PhillipsSarnak94}
R.~Phillips and P.~Sarnak, \emph{Cusp forms for character varieties}, Geom.
  Funct. Anal. \textbf{4} (1994), no.~1, 93--118.

\bibitem{Rademacher_Zuckerman}
H.~{Rademacher} and H.~{Zuckerman}, \emph{{On the Fourier coefficients of
  certain modular forms of positive dimension}}, {Ann. Math. (2)} \textbf{39}
  (1938), 433--462.

\bibitem{Raji_fourier}
W.~Raji, \emph{Fourier coefficients of generalized modular forms of negative
  weight}, Int. J. Number Theory \textbf{5} (2009), no.~1, 153--160.

\bibitem{Raji_cohom}
\bysame, \emph{Eichler cohomology of generalized modular forms of real
  weights}, Proc. Amer. Math. Soc. \textbf{141} (2013), no.~2, 383--392.

\bibitem{Saber_Sebbar}
H.~Saber and A.~Sebbar, \emph{On the existence of vector-valued automorphic
  forms}, Kyushu J. Math. \textbf{71} (2017), no.~2, 271--285.

\bibitem{Selberg_est_fourier}
A.~{Selberg}, \emph{{On the estimation of Fourier coefficients of modular
  forms}}, {Proc. Sympos. Pure Math.} \textbf{8} (1965), 1--15.

\bibitem{Spilioti2015}
P.~Spilioti, \emph{Selberg and {R}uelle zeta functions for non-unitary twists},
  Ann. Global Anal. Geom. \textbf{53} (2018), no.~2, 151--203.

\bibitem{Spilioti2020}
\bysame, \emph{{Functional equations of Selberg and Ruelle zeta functions for
  non-unitary twists}}, {Ann. Global Anal. Geom.} \textbf{58} (2020), no.~1,
  35--77.

\bibitem{Vasiliev}
M.~Vasiliev, \emph{Nonlinear equations for symmetric massless higher spin
  fields in {$({\rm A}){\rm dS}_d$}}, Phys. Lett. B \textbf{567} (2003),
  no.~1-2, 139--151.

\bibitem{Venkov_book}
A.~Venkov, \emph{Spectral theory of automorphic functions}, no. 4 (153), Proc.
  Steklov Inst. Math., 1982.

\bibitem{Watson44}
G.~{Watson}, \emph{{A treatise on the theory of Bessel functions}}, 2nd ed.,
  {Cambridge University Press}, 1966.

\bibitem{Zhu_characters_of_vertex_operator_algebras}
Y.~Zhu, \emph{Modular invariance of characters of vertex operator algebras}, J.
  Amer. Math. Soc. \textbf{9} (1996), no.~1, 237--302.

\end{thebibliography}
\bibliographystyle{amsplain}

\setlength{\parindent}{0pt}

\end{document}